\newtheorem{thm}{Theorem}[subsection]
\newtheorem*{thm*}{Theorem}
\newtheorem{lemma}[thm]{Lemma}
\newtheorem{coro}[thm]{Corollary}
\newtheorem{prop}[thm]{Proposition}
\theoremstyle{definition}
\newtheorem{defn}[thm]{Definition}
\theoremstyle{remark}
\newtheorem{rem}[thm]{Remark}
\newtheorem{ex}[thm]{Example}
\numberwithin{equation}{section}
\def\M{\mathbb{M}}
\def\tM{\widetilde{\mathbb{M}}}
\newcommand{\gmat}[2][ccccccccccccccccccccccccccccccccc]{\left[\begin{array}{#1} #2\\ \end{array}\right]}
\def\k{\mathbb{C}}
\def\S{\mathcal{S}}
\def\L{\mathcal{L}}
\def\V{\mathbb{V}}
\def\X{\mathbf{X}}
\newcommand{\oute}[2]{\mathbf{\Theta}_{#1,#2}}
\newcommand{\inv}[2]{\mathcal{O}_{#1,#2}}
\newcommand{\con}[2]{\mathcal{E}_{#1,#2}}
\def\Rep{\mathcal{O}Rep}
\def\Char{\mathcal{O}Char}
\title{Character algebras of decorated $SL_2(\k)$-local systems.}
\author{Greg Muller}
\author{Peter Samuelson}
\thanks{The first author was supported by the VIGRE program at LSU, National Science Foundation grant DMS-0739382.}
\begin{document}

\begin{abstract}
Let $\S$ be a path-connected, locally-compact CW-complex, and let $\M\subset \S$ be a 
subcomplex 
with finitely-many components.  A `decorated $SL_2(\k)$-local system' is an $SL_2(\k)$-local system on $\S$, together with a choice of `decoration' at each component of $\M$ (a section of the stalk of an associated vector bundle).  

We study the (decorated $SL_2(\k)$-)\emph{character algebra} of $(\S,\M)$, those functions on the space of decorated $SL_2(\k)$-local systems  on $(\S,\M)$ which are regular 
 with respect to 
the monodromy.
The character algebra 
is presented explicitly.  The character algebra is then shown to correspond to the algebra spanned by collections of oriented curves in $\S$ modulo simple graphical rules.

As an intermediate step we
obtain an invariant-theory result of independent interest: a presentation of the algebra of $SL_2(\k)$-invariant functions on $End(\V)^m\oplus \V^n$, where $\V$ is the tautological representation of $SL_2(\k)$. 
\end{abstract}

\maketitle


\section{Introduction}This paper seeks to generalize the theory of character algebras of $SL_2(\k)$-local systems by adding extra data (decorations) at a set $\M\subseteq \S$.

\subsection{$SL_2(\k)$-local systems}  The study of $SL_2(\k)$-local systems on a connected manifold $\S$ (or equivalently, maps $\pi_1(\S)\rightarrow SL_2(\k)$) is a well-known tool for studying the topology of $\S$.  	
This has its origins
in the study of hyperbolic 3-manifolds, where the hyperbolic metric can be used to construct an $SL_2(\mathbb{C})$-local system on $\S$.\footnote{The analogous construction for hyperbolic 2-manifolds requires a little bit more work.  There is a natural $PSL_2(\mathbb{R})$-local system, but it cannot be lifted to an $SL_2(\mathbb{C})$-local system without introducing `twisted local systems'; see \cite[Section 2.4]{FG06}.}

An $SL_2(\k)$-\emph{character function} of $\S$ (or just a `character function') is a $\k$-valued function on the set of equivalence classes of $SL_2(\k)$-local systems which is regular with respect to the monodromy.\footnote{Technically, this disregards the possibility of nilpotent character functions.  More correctly, a character function is a globally regular function on the moduli stack of $SL_2(\k)$-local systems.} For example, associated to any loop $l\in \pi_1(\S)$ there is a character $\chi_l$ which takes the trace of the monodromy of a local system around $l$.  The character functions collectively form the \emph{character algebra} $\Char(S)$.  The $\k$-points of the corresponding affine scheme $Char(S)=Spec(\Char(S))$ parametrize characters of $SL_2(\k)$-representations of $\pi_1(\S)$, and so a character function is a function on the set of characters.\footnote{Again, ignoring the issue of nilpotents.}

The character algebra was explored by Culler and Shalen \cite{CS83} to study incompressible surfaces in 3-manifolds.  Among other results, they proved that the character algebra is generated by characters of the form $\chi_l$, as $l$ runs over $\pi_1(\S)$.

In the 3-dimensional case, Bullock \cite{Bul97} (and later \cite{PS00}) showed that the relations amongst the characters $\{\chi_l\}$ can be encoded graphically, as the \emph{Kauffman skein relations} at $q=-1$.  These are relations coming from knot theory which are local, in the sense that they only manipulate a small neighborhood in $\S$.

A formal presentation of the character algebra was produced by Brumfiel and Hilden \cite{BH95} (building on the work of Procesi \cite{Pro76},\cite{Pro84}).  In fact, they present the character algebra of maps $G\rightarrow SL_2(\k)$ for an arbitrary finitely-generated $G$, generalizing the case when $G=\pi_1(\S)$.
\begin{thm}\cite[Proposition 9.1.i]{BH95}\label{thm: invBH}
For a finitely-generated group $G$, the $SL_2(\k)$-character algebra is the commutative algebra generated by $\chi_g$, for $g\in G$, together with relations
\begin{itemize}
\item $\chi_e=2$, and
\item $\chi_g\chi_h=\chi_{gh}+\chi_{g^{-1}h}$, for all $g,h\in G$.
\end{itemize}
\end{thm}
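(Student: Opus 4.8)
The plan is to present the right-hand algebra as an abstract model and to build an explicit isomorphism onto $\Char(G)$ in three movements: constructing the comparison map, collapsing the abstract algebra onto a spanning set of single generators, and then ruling out any relations beyond the two given families.

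First I would let $A$ be the commutative $\k$-algebra with the stated presentation and define $\phi\colon A\to\Char(G)$ on generators by $\phi(\chi_g)=(\rho\mapsto\mathrm{tr}\,\rho(g))$. Well-definedness is a direct check against the defining properties of $SL_2(\k)$: $\mathrm{tr}\,I=2$ yields $\chi_e=2$, while the Cayley--Hamilton identity, which for $M\in SL_2(\k)$ reads $M+M^{-1}=\mathrm{tr}(M)\,I$, when multiplied by $\rho(h)$ and traced gives $\mathrm{tr}\,\rho(g)\,\mathrm{tr}\,\rho(h)=\mathrm{tr}\,\rho(gh)+\mathrm{tr}\,\rho(g^{-1}h)$. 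Surjectivity follows from the first fundamental theorem for simultaneous conjugation of $2\times2$ matrices \cite{Pro76}: $\Char(G)$ is generated by traces of words in the $\rho(g)$, and $\mathrm{tr}(\rho(g_1)\cdots\rho(g_k))=\mathrm{tr}\,\rho(g_1\cdots g_k)=\phi(\chi_{g_1\cdots g_k})$ already lies in the image. Next I would exploit the product relation to collapse $A$ as a vector space: reading $\chi_g\chi_h=\chi_{gh}+\chi_{g^{-1}h}$ as a rewriting rule shows that the product of any two generators is a $\k$-linear combination of generators, so together with $\chi_e=2\cdot 1$, an induction on the number of factors proves $A=\mathrm{span}_\k\{\chi_g:g\in G\}$. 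Thus $\phi$ is an isomorphism precisely when the linear relations among the functions $\mathrm{tr}\,\rho(g)$ are exactly those produced by the presentation.

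The crux is then injectivity of $\phi$, and the main obstacle is \emph{completeness}: showing every identity among traces is a formal consequence of the fundamental trace identity. My plan is to reduce to the free group $F_N$, where $\Char(F_N)=\k[SL_2(\k)^N]^{SL_2(\k)}$, and to invoke the second fundamental theorem for $2\times2$ matrix invariants, by which all relations among the trace generators are generated by the single fundamental identity---which is precisely $\chi_g\chi_h-\chi_{gh}-\chi_{g^{-1}h}$ after rewriting traces of products as traces of words. For orientation this can be made explicit in low rank, where the Fricke--Vogt theorem identifies $\Char(F_2)$ with the polynomial ring $\k[\chi_x,\chi_y,\chi_{xy}]$, and the general free case follows by the same generation-and-straightening argument.

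Finally, to descend from $F_N$ to a general quotient $G=F_N/\langle\langle r_j\rangle\rangle$, I would observe that $\Char(G)$ is the image of the restriction map from $\Char(F_N)$, cut out by the conditions $\chi_{r_j g}=\chi_g$ for all $g$. On an irreducible $\rho$, Burnside's theorem makes the image span $M_2(\k)$, so nondegeneracy of the trace form upgrades $\mathrm{tr}(\rho(r_j)\rho(g))=\mathrm{tr}\,\rho(g)$ to $\rho(r_j)=I$; since a character only sees the semisimplification of $\rho$, these trace conditions indeed carve out exactly the characters factoring through $G$, matching the corresponding quotient of $A$. I expect the delicate point throughout to be this interplay between the trace-only information recorded in $\Char(G)$ and the matrix-level relations, which is exactly where spurious relations could hide if the completeness step were not controlled by the fundamental identity together with the trace pairing.
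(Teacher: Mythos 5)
The free-group half of your plan is essentially workable, though lighter than you present it. Procesi's second fundamental theorem (Theorem \ref{thm: invpro}) gives the F-relation, a \emph{three}-word identity on $End(\V)^N$, not your two-word product relation; indeed the relation $\chi_g\chi_h=\chi_{gh}+\chi_{g^{-1}h}$ does not even make sense over $End(\V)$, since it uses inverses. To get from one to the other you must (i) pass from $End(\V)^N$ to $SL_2(\k)^N$, which is legitimate for free $G$ because $\det(\X_i)-1$ is itself $SL_2$-invariant, so Lemma \ref{lemma: invrel} applies (this is the content of Remark \ref{rem: gettinginvariant}), and (ii) check that, modulo $\det(\X_i)=1$, i.e.\ $\X_i^{-1}=tr(\X_i)\,Id_\V-\X_i$, the ideal generated by the F-relation together with the determinant relations coincides with the ideal generated by $\chi_e-2$ and $\chi_g\chi_h-\chi_{gh}-\chi_{g^{-1}h}$. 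That equivalence is true but is a genuine computation, not a ``rewriting''; your appeal to Fricke--Vogt for $F_2$ is orientation, not proof. This part is patchable from the literature.

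The genuine gap is the descent from $F_N$ to $G=F_N/\langle\langle r_j\rangle\rangle$. Your Burnside/semisimplification argument is pointwise: it shows that the vanishing locus in $Char(F_N)$ of the ideal generated by $\{\chi_{r_jg}-\chi_g\}$ has the same $\k$-points as the image of $Rep(G)$. That identifies the kernel of $\Char(F_N)\rightarrow\Char(G)$ with that ideal only \emph{up to radical}. But $\Char(G)$ is by definition $\Rep(G)^{SL_2}$ for the universal, possibly non-reduced, representation algebra, and the theorem asserts generation of the actual invariant ideal $I^{SL_2}$, where $I$ is generated by the coordinates $\varphi\left(\X_{r_j}-Id_\V\right)$, $\varphi\in End(\V)^\vee$. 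Those generators are \emph{not} $SL_2$-invariant, so Lemma \ref{lemma: invrel} does not apply, and no argument about characters of semisimple representations can control $I^{SL_2}$ or detect nilpotents; spurious invariant relations could survive exactly here. This is precisely the obstruction the paper's proof is engineered around: it tensors with $End(\V)$, where $\X_{r_j}-Id_\V$ \emph{is} an invariant element of the concomitant algebra $\con{m}{n}$, applies Lemma \ref{lemma: invrel} there (Corollary \ref{coro: matcharpres}), and then returns to the character algebra via the trace and the map $\tau$ (Theorem \ref{thm: matcharpres} and the proof of Theorem \ref{thm: main}). Without a substitute for this step, your final paragraph establishes at most that the proposed presentation computes the character algebra up to nilpotents, which is strictly weaker than the stated theorem.
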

\subsection{Decorations} The theme of this paper is the addition of `decorations' to the preceding theory.  Fix a subset $\M\subseteq \S$, and let $\V$ be the tautological 2-dimensional representation of $SL_2(\k)$.  A \emph{decorated $SL_2(\k)$-local system} on $\S$ is...
\begin{itemize}
\item an $SL_2(\k)$-local system $\mathcal{L}$ on $\S$,
\item for each connected component $m\subset \M$, a choice of section $d_m$ over $m$ of the $\V$-vector bundle associated to $\mathcal{L}$.
\end{itemize}
Aside from being a very natural generalization of $SL_2(\k)$-local systems, there are several justifications for their study:
\begin{itemize}
\item Their character algebra can be explicitly presented, in a direct generalization of Theorem \ref{thm: invBH}.
\item Their character functions have a graphical calculus (Section 4) which generalizes the undecorated case.
\item They can be used to define `twisted decorated $SL_2(\k)$-local systems' on a surface, which have many excellent properties.  When $\S$ is hyperbolic, points in Penner's decorated Teichm\"uller space of $\S$ correspond to twisted decorated $SL_2(\k)$-local systems satisfying a positivity condition, see \cite{FG06}.
The character algebra of the twisted $SL_2(\k)$-local systems on $\S$ is naturally isomorphic to (the decorated version of) the `Kauffman skein algebra' at $q=1$.  Through this connection to skein algebras, the character algebra can be related to the cluster algebra of the surface (as defined in \cite{GSV05},\cite{FST08}).  Twisted decorated local systems will be explored in an upcoming paper \cite{MSSkein}.
\item In a physical theory where the role of wavefunction is played by $SL_2(\k)$-local systems (such as certain TQFTS, such as $\mathfrak{sl}_2(\k)$-Chern-Simons theory, see e.g. \cite{Ati90}), the decorations play the role of boundary values.  Here, the character functions are closely related to the observables; compare Section \ref{section: graphical} to the theory of `Wilson lines'.
\end{itemize}

\subsection{Character functions}
Like the undecorated case, decorated $SL_2(\k)$-local systems can be reduced to some algebraic \emph{monodromy data}.  However, instead a map of groups $\pi_1(\S)\rightarrow SL_2(\k)$, the monodromy data is a map of \emph{group actions} (see Section \ref{section: groupaction})
\[(\pi_1(\S),\pi_0(\tM))\rightarrow (SL_2(\k),\V)\]

A \emph{character function} has the analogous definition to the undecorated case; it is a function on isomorphism classes of decorated $SL_2(\k)$-local systems which is regular in the monodromy data.\footnote{As before, this disregards the possibility of nilpotent character functions.  More correctly, a character function is a globally regular function on the moduli stack of decorated $SL_2(\k)$-local systems.}   The character functions collectively form the \emph{character algebra} $\Char(\S,\M)$, our central object of study.

There are two easy examples of character functions on $(\S,\M)$, which will be informally referred to as the \emph{elementary character functions}.  
\begin{itemize}
\item First, for $l\in \pi_1(\S)$, the function $\chi_l$ which takes the trace of the monodromy of $\mathcal{L}$ around $l$ is a character function; note that this ignores the decorations.  
\item Second, if $a$ is a smooth, oriented arc connecting points $p,q\in \M$, then $\L$ and $a$ define a parallel transport map $\Gamma_a:\V_p\rightarrow \V_q$ between the stalks of the associated $\V$-vector bundle.  For a fixed bilinear skew form $\omega$ on $\V$, define
\[ \chi_a(\mathcal{L}^d):=\omega(\Gamma_a(d_p),d_q)\]
This character function only depends on $a$ up to homotopy (where the endpoints must stay in $\M$).
%
\end{itemize}
Let $\widetilde{\S}$ be the universal cover of $\S$, and let $\tM$ be the preimage of $\M$ in $\widetilde{\S}$.  Then a pair of components $p,q\in \pi_0(\tM)$ defines a homotopy class of oriented arcs $(p,q)$ in $(\S,\M)$, by considering the image in $\S$ of the unique class of oriented arcs connecting $p$ to $q$ in $\widetilde{\S}$.  In this way, a pair of components $p,q\in \pi_0(\tM)$ determines a character function $\chi_{(p,q)}$.

Then the main result of the paper is the following (Theorem \ref{thm: main}).
\begin{thm}
The character algebra $\Char(\S,\M)$ is the commutative $\k$-algebra generated by the elementary character functions $\chi_l$, for $l\in \pi_1(\S)$, and $\chi_{(p,q)}$, for $p,q\in \pi_0(\tM)$, with relations generated by
\begin{enumerate}
\item $\chi_e=2$, for $e$ the identity in $G$.
\item $\chi_{(p,q)}=-\chi_{(q,p)}$.
\item $\chi_{(gp,gq)}=\chi_{(p,q)}$.
\item $\chi_g\chi_h=\chi_{gh}+\chi_{g^{-1}h}$.
\item $\chi_g\chi_{(p,q)} = \chi_{(gp,q)}+\chi_{(g^{-1}p,q)}$.
\item $\chi_{(p,q)}\chi_{(p',q')}=\chi_{(p,q')}\chi_{(p',q)}+\chi_{(p,p')}\chi_{(q,q')}$.
\end{enumerate}
\end{thm}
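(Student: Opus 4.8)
The plan is to deduce this presentation from the invariant-theory result for $SL_2(\k)$ acting on $End(\V)^m\oplus\V^n$, exactly as the Brumfiel--Hilden presentation (Theorem \ref{thm: invBH}) is the undecorated shadow of the classical $SL_2(\k)$-invariant theory of matrices. First I would replace decorated local systems by their monodromy data, identifying $\Char(\S,\M)$ with the ring of $SL_2(\k)$-invariant regular functions on the scheme of equivariant maps $(\pi_1(\S),\pi_0(\tM))\to(SL_2(\k),\V)$. A point of this scheme is a pair $(\rho,d)$ where $\rho\colon\pi_1(\S)\to SL_2(\k)$ is a homomorphism and $d\colon\pi_0(\tM)\to\V$ satisfies the equivariance $d_{gp}=\rho(g)d_p$; under this dictionary the elementary functions become $\chi_l(\rho,d)=\mathrm{tr}\,\rho(l)$ and $\chi_{(p,q)}(\rho,d)=\omega(d_p,d_q)$, with $\omega$ the fixed skew form. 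Since the presentation and all six relations involve only finitely many elements of $\pi_1(\S)$ and $\pi_0(\tM)$ at a time, I would first reduce to a finitely generated group acting on a finite set by a filtered-colimit argument, so that the invariant-theory input applies and the surjections and injections below pass to the colimit.

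The easy half is to check that relations (1)--(6) hold among genuine character functions and that the elementary functions generate. Relation (1) is $\mathrm{tr}\,I=2$; relation (2) is the antisymmetry of $\omega$; relation (3) is the $SL_2(\k)$-invariance of $\omega$ together with equivariance, since $\omega(\rho(g)d_p,\rho(g)d_q)=\det\rho(g)\,\omega(d_p,d_q)=\omega(d_p,d_q)$. Relations (4) and (5) are both instances of the Cayley--Hamilton identity $A+A^{-1}=(\mathrm{tr}\,A)\,I$ for $A\in SL_2(\k)$: pairing $\rho(g)d_p+\rho(g)^{-1}d_p=(\mathrm{tr}\,\rho(g))d_p$ against $d_q$ through $\omega$ gives (5), while right-multiplying $\rho(g)+\rho(g)^{-1}=(\mathrm{tr}\,\rho(g))I$ by $\rho(h)$ and taking traces gives (4). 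Relation (6) is the two-dimensional Grassmann--Pl\"ucker identity $\omega(a,b)\omega(c,d)=\omega(a,c)\omega(b,d)-\omega(a,d)\omega(b,c)$, valid because any three vectors in $\V$ are linearly dependent. Generation then follows from the first-fundamental-theorem part of the intermediate invariant-theory result: the invariant ring is spanned by traces of words in the $\rho(g)$ and by skew pairings $\omega(\rho(w)d_p,d_q)$ with a word $w$ inserted; by equivariance $\omega(\rho(w)d_p,d_q)=\omega(d_{wp},d_q)=\chi_{(wp,q)}$, so every matrix insertion is absorbed into the choice of lift in $\pi_0(\tM)$ and no new generators are needed.

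The hard half---and the main obstacle---is to show that (1)--(6) generate all relations, i.e.\ that the evident surjection from the abstract algebra presented by the theorem onto $\Char(\S,\M)$ is injective. Here I would invoke the full (second-fundamental-theorem) presentation of the intermediate invariant-theory result, whose defining relations are the fundamental trace identities together with the Pl\"ucker relations for the skew form. The crux is to verify that each such relation, when pulled back through the equivariance dictionary $\omega(\rho(w)d_p,d_q)=\chi_{(wp,q)}$, becomes a consequence of (1)--(6); the role of relation (3) is precisely to move matrix factors across the skew form and to reconcile the two lifts that can represent a given matrix insertion. I expect this bookkeeping to be the delicate point, and I would make it rigorous by exhibiting an explicit spanning set for the abstract algebra---trace monomials reduced by (4) to cyclically reduced words, and skew-pairing monomials reduced by (6) to a non-crossing collection---and matching it against a known basis of the invariant ring, so that the surjection is forced to be an isomorphism.
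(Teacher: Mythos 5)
Your first half is sound and coincides with the paper's: the monodromy dictionary is Proposition \ref{prop: mono}, the verification of relations (1)--(6) is Proposition \ref{prop: relations} (same Cayley--Hamilton and Pl\"ucker computations), and generation follows from Procesi's first fundamental theorem via the surjection of invariants, as in Proposition \ref{prop: invsurj}. The gap is in your hard half. For a general group action $(G,M)$, the scheme $Rep(G,M)$ is cut out of $End(\V)^m\oplus\V^n$ by an ideal $I$ generated by the \emph{matrix entries} of $\X_{i_1}\cdots\X_{i_j}-Id_\V$ and the \emph{vector entries} of $(\X_{j_1}\cdots\X_{j_k}-Id_\V)\mathbf{v}_j$ (Definition \ref{defn: repalg}), and these generators are not $SL_2$-invariant. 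Reductivity does give $\Char(G,M)=\inv{m}{n}^{SL_2}/I^{SL_2}$, but $I^{SL_2}$ need not be generated by the ``pulled-back'' relations: applying the Reynolds operator to products $a\cdot r$ with $a$ non-invariant produces invariant elements of $I$ that your dictionary $\omega(\rho(w)d_p,d_q)=\chi_{(wp,q)}$ never sees. So checking that each relation of Theorem \ref{thm: invpres} maps into the ideal generated by (1)--(6) only verifies containment of the obvious relations; it does not bound the kernel of $H^+(G,M)\rightarrow\Char(G,M)$, which is the actual content of the theorem. The paper's resolution, absent from your proposal, is to pass to matrix-valued functions: in $\con{m}{n}$ --- more precisely its even part, needed because the decoration relations are odd under $-Id_\V$ --- the defining relations become the $SL_2$-invariant elements $\X_i\X_i^\iota-Id_\V$, $\X_{i_1}\cdots\X_{i_j}-Id_\V$ and $(\X_{j_1}\cdots\X_{j_k}-Id_\V)\oute{j}{j'}$, so Lemma \ref{lemma: invrel} applies and yields the presentation of $(End(\V)\otimes \Rep(G,M))^{SL_2}$ as a quotient of the tensor algebra $\mathcal{T}_GM^2$ (Theorem \ref{thm: matcharpres}); $\Char(G,M)$ is then recovered as the image of the trace, and injectivity of $\chi$ is proved by constructing an explicit linear section $\tau:\mathcal{T}_GM^2\dashrightarrow H^+(G,M)$ whose defining identities are precisely relations (1)--(6), giving $\tau\chi=2\cdot\mathrm{id}$ and $\chi\tau=tr$. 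Your plan works without modification only when $G$ is free and $M$ is a free $G$-set, where $I$ happens to be invariantly generated (cf.\ Remark \ref{rem: gettinginvariant}); the non-free case is the whole difficulty.

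Your fallback --- a normal form for $H^+(G,M)$ (cyclically reduced trace monomials, non-crossing pairing monomials) matched against ``a known basis of the invariant ring'' --- is not viable in this generality: for an arbitrary finitely generated $(G,M)$ no basis of $\Char(G,M)$ is known, the algebra may be non-reduced, and the rewriting by (4) and (6) has no evident confluence once $G$ has relations and $M$ has stabilizers (even in the undecorated case, Brumfiel--Hilden's proof is not a basis count). Note also that your filtered-colimit remark only removes infinite generation; it does not reduce a general $(G,M)$ to the free case, so the central obstruction above survives the reduction.
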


Note that the generators all correspond to classes of oriented curves in $(\S,\M)$.  The relations in the theorem can then be translated into simple manipulations of these curves, and so computations in the character algebra may be performed graphically.

\subsection{The algebra of mixed $SL_2(\k)$-invariants} 
In the course of proving the main theorem, an invariant theory result of independent interest will be proven.  Recall $\V$ is the tautological $2$-dimensional representation of $SL_2(\k)$.  Let $End(\V)$ denote the algebra of linear endomorphisms of $\V$; it may be identified with the set of $2\times 2$ matrices.  The group $SL_2(\k)$ acts on $End(\V)$ by conjugation.

For $m,n\in \mathbb{N}$, we consider the vector space
$ End(\V)^m\oplus \V^n$
as an algebraic variety and denote its ring of functions by $\inv{m}{n}$.  
The invariant subalgebra $\inv{m}{n}^{SL_2}$ is called the algebra of \emph{mixed invariants}.\footnote{It interpolates between the algebra of invariant functions on $\V^n$ (a subject of classical invariant theory) and the algebra of invariant functions on $End(\V)$ (a subject of modern interest).}  This algebra appears to have first been considered by Procesi in \cite[Sec.12]{Pro76}.

Let $(\mathsf{A}_1,...,\mathsf{A}_m,v_1,...,v_n)$ denote an arbitrary element of $End(\V)^m\oplus \V^n$.  For $1\leq i\leq m$ and $1\leq j,k\leq n$, define
\[ \X_i(\mathsf{A}_1,...,\mathsf{A}_m,v_1,...,v_n):=\mathsf{A}_i\in End(\V)\]
\[ \oute{j}{k}(\mathsf{A}_1,...,\mathsf{A}_m,v_1,...,v_n):=v_jv_k^\perp\in End(\V)\]
For any word $\mathbf{A}$ in the $\{\X_i,\oute{j}{k}\}$, the function 
\[tr(\mathbf{A}):End(\V)^m\oplus \V^n\rightarrow \k\]
is $SL_2(\k)$-invariant.  Then the following is an explicit presentation of the algebra $\inv{m}{n}^{SL_2}$ of mixed invariants (Theorem \ref{thm: invpres}).
\begin{thm}
The algebra $\inv{m}{n}^{SL_2}$ of mixed invariants is the commutative algebra generated by $tr(\mathbf{A})$, as $\mathbf{A}$ runs over all words in the functions $\{\X_i,\oute{i}{j}\}$.
The relations are generated by
\begin{itemize}
\item (Procesi's F-relation) For $\mathbf{A},\mathbf{B},\mathbf{C}$ words in $\{\X_i,\oute{i}{j}\}$,
\begin{align*}
 tr(\mathbf{A}\mathbf{B}\mathbf{C})+tr(\mathbf{CBA})+tr(\mathbf{A})tr(\mathbf{B})tr(\mathbf{C}) &\\ =tr(\mathbf{B})tr(\mathbf{AC})&+tr(\mathbf{AB})tr(\mathbf{C})+ tr(\mathbf{A})tr(\mathbf{BC})
\end{align*}
\item
$tr(\mathbf{A}\oute{i}{j})=tr(\mathbf{A}\oute{j}{i})-tr(\mathbf{A})tr(\oute{j}{i})$, for $\mathbf{A}$ a word in the $\{\X_i,\oute{i}{j}\}$ and $1\leq i,j\leq n$,
\item $tr(\mathbf{A}\oute{i}{j}\mathbf{B}\oute{i'}{j'})=tr(\mathbf{A}\oute{i}{j'})tr(\mathbf{B}\oute{i'}{j})$, for $\mathbf{A},\mathbf{B}$ words in $\{\X_i,\oute{i}{j}\}$ and $1\leq i,j,i',j'\leq n$.
\end{itemize}
\end{thm}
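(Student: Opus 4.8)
The plan is to realize the mixed-invariant algebra as a quotient of a pure matrix-invariant algebra, so that the hard input is Procesi's presentation of the latter. I would begin by recording that, once the definitions are unwound, the three displayed relations are all elementary $2\times 2$ identities. Writing $v^\perp(w)=\omega(v,w)$, the outer product satisfies $\oute{a}{b}\oute{c}{d}=\omega(v_b,v_c)\,\oute{a}{d}$ and $tr(\oute{a}{b})=\omega(v_b,v_a)$, and the trace of any word ending in an outer product is a symplectic pairing, $tr(\mathbf{A}\oute{a}{b})=\omega(v_b,\mathbf{A}v_a)$. From these, relation (3) is just the rank-one multiplication rule, relation (2) is the symplectic-adjoint identity $\omega(u,Aw)+\omega(Au,w)=tr(A)\,\omega(u,w)$ (equivalently $A+A^{\#}=tr(A)\,\mathrm{Id}$ for $2\times2$ matrices), and the F-relation is Procesi's polarized Cayley--Hamilton identity. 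So all three relations visibly hold in $\inv{m}{n}^{SL_2}$, and the content of the theorem is that they generate every relation.

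Next I would introduce the $SL_2(\k)$-equivariant map
\[\iota:End(\V)^m\oplus\V^n\longrightarrow End(\V)^{m+n^2},\qquad (A_\bullet,v_\bullet)\mapsto\bigl(A_1,\dots,A_m,(v_iv_j^\perp)_{i,j}\bigr),\]
which is equivariant because $(gv)^\perp=v^\perp g^{-1}$. Pulling back invariants gives an algebra map $\iota^*:\inv{m+n^2}{0}^{SL_2}\to\inv{m}{n}^{SL_2}$ whose image is exactly the subalgebra generated by traces of words in $\{\X_i,\oute{i}{j}\}$. Thus the generation half of the theorem is equivalent to the surjectivity of $\iota^*$, which I would deduce from the classical first fundamental theorem for $SL_2=Sp_2$ (Weyl's symplectic FFT, or directly Procesi \cite[Sec.~12]{Pro76}): every $SL_2$-invariant of the mixed tensor data is a polynomial in full $\omega$-contractions, and each such contraction is a product of traces $tr(\mathbf{A})$ and pairings $\omega(v_b,\mathbf{A}v_a)=tr(\mathbf{A}\oute{a}{b})$. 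Relation (3) then reduces any word to products of words containing at most one outer product, giving a concrete spanning set.

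For the relations I would use that $\inv{m+n^2}{0}^{SL_2}$ is itself presented by traces of words modulo the F-relation alone --- this is Procesi's second fundamental theorem for $2\times2$ matrices \cite{Pro84}, the $n=0$ case. Consequently $\inv{m}{n}^{SL_2}\cong\inv{m+n^2}{0}^{SL_2}/\ker\iota^*$, and it remains to show that $\ker\iota^*$ --- the ideal of matrix invariants vanishing on the image of $\iota$ --- is generated, as an ideal of this trace algebra, by relations (2) and (3). One inclusion is clear, since (2) and (3) hold along $\iota$. For the reverse I would produce a normal form: using (2) to order the two indices of each outer product and using (3) to collapse any word with at least two outer products into a product of shorter ones, every element of the abstract algebra presented by $\{tr(\mathbf{A})\}$ modulo $\{F,(2),(3)\}$ is rewritten as a polynomial in the pure matrix traces (governed by $F$) and the single-pairing generators $\omega(v_b,\mathbf{A}v_a)$ in a fixed index order.

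The main obstacle is this last injectivity step: showing that nothing beyond (2), (3), and $F$ survives, i.e. that the normal forms are linearly independent in $\inv{m}{n}^{SL_2}$ (equivalently that $\ker\iota^*$ contains no new invariants). This is the genuine second-fundamental-theorem content and does not follow formally from Procesi: the image $\overline{\iota(End(\V)^m\oplus\V^n)}$ is the locus where the last $n^2$ matrices are the rank-one products $v_iv_j^\perp$, a determinantal-type subvariety whose $SL_2$-invariant ideal I must verify is cut out by (2) and (3). I expect to resolve this either by a direct separation argument --- evaluating the normal-form monomials on generic configurations of matrices and vectors to confirm their linear independence as functions --- or by matching Hilbert series between the abstractly presented algebra and $\inv{m}{n}^{SL_2}$. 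The delicate bookkeeping concerns the symplectic pairings $\omega(v_b,\mathbf{A}v_a)$ and, for $n\ge3$, the Plücker-type relations among them that must be recovered from the $F$-relation applied to outer products; controlling these is where the real work lies.
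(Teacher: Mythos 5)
Your proposal follows the same skeleton as the paper: the same equivariant map (your $\iota^*$ is the paper's $\nu$, induced by $\V^n\otimes\V^n\simeq End(\V)^{n^2}$), the same surjectivity input from Procesi's generators for mixed invariants (Proposition \ref{prop: invsurj}), and the same use of Procesi's F-relation presentation of $\inv{m+n^2}{0}^{SL_2}$. But there is a genuine gap at exactly the point you flag as ``where the real work lies'': you leave the determination of $\ker\iota^*$ as a plan --- a normal form plus either generic-evaluation linear independence or a Hilbert series comparison --- and neither method is carried out, nor is either routine. Proving that your proposed normal forms are linearly independent \emph{is} a symplectic straightening law, i.e.\ precisely the second-fundamental-theorem content the theorem asserts; and the Hilbert series of the abstractly presented algebra cannot be computed without such a normal form (one would need confluence of the rewriting by $F$, (2), (3), including the Pl\"ucker-type consequences you mention), so that route is circular as stated. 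As written, the proof stalls at its decisive step.

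The missing idea, which the paper uses to circumvent all of this, is to compute the kernel \emph{before} taking invariants. Under the identifications $\inv{n^2}{0}\simeq Sym^\bullet(\V^n\otimes\V^n)$ and $\inv{0}{n}\simeq Sym^\bullet(\V^n)$, the map $\nu$ becomes the natural symmetrization map, whose kernel is generated by differences coming from simple transpositions: transposing the two slots inside one tensor factor yields relation (2), and transposing slots across two factors yields relation (3), in each case with arbitrary \emph{non-invariant} coefficients $\mathsf{A},\mathsf{B}\in End(\V)$ (Lemma \ref{lemma: kernel}). No linear independence or dimension count is ever needed. Descending to the invariant kernel is then a second, separate issue your proposal does not anticipate: even granting that $\ker\nu$ is generated by (2) and (3), the generators are not $SL_2$-invariant, so one cannot directly conclude that the invariant part of the ideal is generated by the \emph{invariant} instances of (2) and (3). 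The paper handles this with the Reynolds operator (Lemma \ref{lemma: Reynolds}) applied to the subrepresentations spanned by these relations, which in turn requires identifying the invariants of $\con{m+n^2}{0}\otimes_{\inv{m+n^2}{0}}\con{m+n^2}{0}$: they are spanned by $\mathbf{A}\otimes\mathbf{B}$ together with the less obvious elements $\sum_{i,j\in\{1,2\}}e_{ij}\mathbf{A}\otimes e_{ji}\mathbf{B}$, proved by embedding into $\inv{m+n^2+2}{0}$ via $A\otimes B\mapsto tr(A\mathbf{Y}_1B\mathbf{Y}_2)$ and invoking Procesi's theorem once more (Lemma \ref{lemma: invkernel}). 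With these two ingredients the presentation follows formally; without them, your plan amounts to re-deriving a classical straightening theorem from scratch.
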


\subsection{Structure of paper} 

The first half of the paper concerns definitions and the presentation of results.
\begin{itemize}
\item Section 2 defines local systems, decorated local systems, and their monodromy data.  We define the category of group actions, which is the proper context for the monodromy data.
\item Section 3 replaces the monodromy group action $(\pi_1(\S),\pi_0(\tM))$ with an arbitrary finitely-generated group action $(G,M)$, and defines the corresponding character algebra $\Char(G,M)$.  Basic results on elementary character functions are given, as well as a statement of the main theorem (Theorem \ref{thm: main}).
\item Section 4 explores the graphical nature of the character algebra.  A complete list of the rules for manipulating curves is presented, which is the graphical analog of Theorem \ref{thm: main}.
\end{itemize}
The second half of the paper proves the main result.  It is fairly computational and has a strong invariant-theory flavor. 
\begin{itemize}
\item Section 5 introduces the prerequisites from invariant theory, and defines the algebras of mixed invariants $\inv{m}{n}^{SL_2}$ and mixed concomitants $\con{m}{n}^{SL_2}$, which will be important intermediary objects.  Several previously-known results are presented as special cases of Theorem \ref{thm: invpres}. 
\item Section 6 produces presentations of the algebras of mixed invariants and mixed concomitants (Theorems \ref{thm: invpres} and \ref{thm: conpres}).
\item Section 7 applies the presentations from the previous section to produce a presentation of the character algebra $\Char(\S,\M)$.
\end{itemize}
The paper concluces with an appendix on twisted character algebras, a variant useful for applications.

\subsection{Acknowledgements} This paper owes its existence to many fellow mathematicians for encouraging discussions and directing the authors to the right track; in particular, the authors' PhD advisor Yuri Berest has given many patient, insightful explanations. Others deserving thanks include Adam Sikora, Dylan Thurston, and Milen Yakimov.  The authors are also grateful to the organizers of the conferences `Quantization of Singular Spaces' at Aarhus University (December 2010) and `Mathematical Aspects of Quantization' at Notre Dame (June 2011), during which much of this paper was proven and written.


\section{Decorated $SL_2(\k)$-local systems}

\subsection{Spaces with marked regions}  We begin by considering pairs $(\S,\M)$, where $\S$ is a path-connected, locally-compact CW-complex, and $\M\subset \S$ is an embedded CW-complex with finitely many components, each of which is path-connected ($\M$ may be empty).

The marked regions $\M$ will only matter up to homotopy equivalence in $\S$.  In particular, if each component of $\M$ is contractible in $\S$, then each component of $\M$ can be replaced by a point.  Regardless, $\pi_0(\M)$ will denote the set of components of $\M$.

Choose a basepoint $p\in \S$, and let $\pi_1(\S):=\pi_1(\S,p)$ be the fundamental group at $p$.   Let $\widetilde{\S}$ denote the universal cover of $\S$, 
and $\tM$ the preimage of $\M$.  The fundamental group $\pi_1(\S)$ then acts on $\widetilde{\S}$, $\tM$ and $\pi_0(\tM)$.

\subsection{$G$-local systems}

Recall that a \emph{$G$-local system} $\L$ on $\S$ is a locally-constant sheaf 
of sets
on $\S$ with a $G$-action, so that 
$\L$ is locally
a free and transitive $G$-set (a $G$-torsor).  Two $G$-local systems are equivalent if there is a $G$-equivariant isomorphism of sheaves between them.

The choice of a basepoint $p$ associates to every $G$-local system $\L$ its \emph{monodromy representation} $\rho_\L:\pi_1(\S)\rightarrow G$.  Two such representations are equivalent if they are $G$-conjugate.  Then the following is standard (see, e.g. \cite{Sza09}).
\begin{thm}\label{thm: local=mono}
The assignment $\L\rightarrow \rho_\L$ induces a bijection
\[ \left\{ \begin{array}{c}\text{$G$-local systems on $\S$}\\\text{up to equivalence}\end{array}\right\}
\stackrel{\sim}{\longrightarrow}
\left\{ \begin{array}{c}\text{homomorphisms $\pi_1(\S)\rightarrow G$}\\\text{up to equivalence}\end{array}\right\}\]
\end{thm}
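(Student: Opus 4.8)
The plan is to set up the standard dictionary between locally constant sheaves of $G$-torsors and representations of $\pi_1(\S)$, and then to check that it descends to the stated equivalence relations on both sides. First I would construct the assignment $\L\mapsto\rho_\L$ explicitly via parallel transport. Since $\L$ is locally isomorphic to a constant $G$-torsor, the restriction of $\L$ to a neighborhood of any path $\gamma$ in $\S$ is constant, and this yields a transport isomorphism between the stalks at the two endpoints of $\gamma$; local constancy guarantees that this isomorphism depends only on the homotopy class of $\gamma$ rel endpoints. Fixing a trivialization $\L_p\cong G$ of the stalk at the basepoint as a left $G$-torsor, transport around a loop $l$ becomes a $G$-equivariant automorphism of $G$, hence right multiplication by a unique element $\rho_\L(l)\in G$; functoriality of transport under concatenation of paths gives the homomorphism property. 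Changing the trivialization of $\L_p$ replaces $\rho_\L$ by a $G$-conjugate, so the conjugacy class is a well-defined invariant of $\L$, and any $G$-equivariant isomorphism $\L\cong\L'$ intertwines the transports, forcing $\rho_\L$ and $\rho_{\L'}$ to be conjugate. This shows the assignment is well-defined on equivalence classes.

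Next I would produce the inverse construction using the universal cover. Given $\rho\colon\pi_1(\S)\to G$, let $\pi_1(\S)$ act on $\widetilde{\S}$ by deck transformations and on $G$ by $\gamma\cdot g:=\rho(\gamma)g$, and form the quotient $\widetilde{\S}\times_{\pi_1(\S)}G$. Right multiplication by $G$ on the second factor commutes with the diagonal $\pi_1(\S)$-action and hence descends, making the projection to $\S$ a bundle of $G$-torsors; its sheaf of local sections is a $G$-local system $\L_\rho$. I would then verify that the two constructions are mutually inverse: computing the transport of $\L_\rho$ around a loop recovers $\rho$ up to the conjugacy arising from the choice of a base lift of $p$ in $\widetilde{\S}$, while conversely the canonical trivialization of the pullback of $\L$ to the simply-connected $\widetilde{\S}$ exhibits $\L$ as $G$-equivariantly isomorphic to $\L_{\rho_\L}$.

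The crux, and the only step requiring genuine care rather than bookkeeping, is the homotopy invariance of parallel transport: that transport along a path depends only on its homotopy class rel endpoints. Here I would invoke the defining local triviality of $\L$ together with a Lebesgue-number argument, subdividing any homotopy into small squares over each of which $\L$ is constant, thereby upgrading the invariance from the local to the global statement; equivalently, this is the assertion that the pullback of $\L$ to the simply-connected universal cover $\widetilde{\S}$ is a constant $G$-torsor, canonically up to a single global $G$-translation (which is exactly the source of the conjugation ambiguity). Once transport is known to be homotopy-invariant and $G$-equivariant, everything else — the homomorphism property, the well-definedness up to conjugacy, and the mutual inverseness of the two constructions — follows formally, recovering the standard equivalence recorded in \cite{Sza09}.
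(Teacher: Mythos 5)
The paper does not actually prove this statement: it is recorded as a standard fact with a pointer to \cite{Sza09}, so there is no in-text argument to compare yours against. Your proposal is a correct rendition of the standard monodromy correspondence --- parallel transport defines $\rho_\L$ up to conjugacy, the associated construction $\widetilde{\S}\times_{\pi_1(\S)}G$ (with $\pi_1(\S)$ acting on $G$ through $\rho$ and the residual right $G$-action making it a torsor) inverts it, and homotopy invariance of transport is the one point of substance --- and this is precisely the argument in the cited reference, so nothing is missing. Two bookkeeping caveats are worth flagging. First, your opening claim that ``the restriction of $\L$ to a neighborhood of any path $\gamma$ is constant'' is not literally true: a neighborhood of a path need not be simply connected, so $\L$ need not be constant on it. What is true, and what your later Lebesgue-number subdivision (or the pullback-to-$\widetilde{\S}$ formulation) actually establishes, is that the pullback of $\L$ along $\gamma$ to $[0,1]$, respectively along a homotopy to $[0,1]^2$, is constant; since you supply that argument in your final paragraph, this is a local misstatement rather than a gap, but the first paragraph should be phrased in terms of pullbacks. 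Second, whether concatenation of loops produces a homomorphism or an anti-homomorphism into $G$ depends on the composition conventions for paths and for right multiplications; this is fixable by a choice of convention (or by inserting an inversion), and it does not affect the bijection, but a careful write-up should pin it down. Note also that the standing hypotheses on $\S$ (path-connected, locally compact CW-complex) are what guarantee the universal cover exists, which your inverse construction silently uses.
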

In particular, if $\S$ is simply-connected, then every $G$-local system is trivial.

Given a $G$-local system $\L$ and a $G$-representation $V$, there is an \emph{induced vector bundle} $\L\times_G V$.  This is the vector bundle locally modeled on $V$ with transition maps determined by $\L$.

\subsection{Decorated $SL_2(\k)$-local systems}

Let $\V$ denote the standard $SL_2(\k)$- representation; it is 2-dimensional over $\k$.  Fix an $SL_2(\k)$-invariant, skew-symmetric bilinear form $\omega$ on $\V$, though this will only be used later.

\begin{defn}
A \textbf{decorated $SL_2(\k)$-local system} $\L^d$ on $(\S,\M)$ is...
\begin{itemize}
\item a $SL_2(\k)$-local system $\L$, and
\item for each connected component $m\subset \M$, a choice of local section $d_m\in (\L\times_{SL_2(\k)} \V)_m$.\footnote{Here, $(\L\times_{SL_2(\k)} \V)_m$ denotes the stalk of $\L\times_{SL_2(\k)} \V$ at $m$.}
\end{itemize}
\end{defn}

%


Note that (undecorated) $SL_2(\k)$-local systems are the special case when $\M=\emptyset$.  


Given a decorated $SL_2(\k)$-local system $\L^d$ on $(\S,\M)$, the universal covering map $\gamma:\widetilde{\S}\rightarrow \S$ induces a decorated $SL_2(\k)$-local system $\gamma^*\L^d$ on $(\widetilde{\S},\tM)$ (where $\tM=\gamma^{-1}(\M)$).  Since $\widetilde{\S}$ is simply-connected, $\gamma^*\L^d$ can be globally trivialized so that
\[\Gamma(\widetilde{\S},\gamma^*\L^d)\simeq SL_2(\k),\;\;\; \Gamma(\widetilde{\S},(\gamma^*\L^d\times_{SL_2(\k)}\V))\simeq \V\]
The action of $\pi_1(\S)$ on $\widetilde{\S}$ gives an action on local sections which recovers the monodromy map $\rho:\pi_1(\S)\rightarrow SL_2(\k)$.  Since all the stalks in $\gamma^*\L^d$ may be identified with the global sections, the decorations on $\tM$ amount to a map $\rho_d:\pi_0(\tM)\rightarrow \V$.  If two marked components $m,m'\in \M$ are related by the action of $l\in \pi_1(\S)$, then their decorations must be related by the corresponding monodromy; that is, $\forall m\in \M$ and $l\in \pi_1(\S)$,
\[ \rho_d(lm) =\rho(l)\rho_d(m)\]
Changing the global trivialization of $\gamma^*\L^d$ by $g\in SL_2(\k)$ will send $\rho$ to $g\rho g^{-1}$, and $\rho_d$ to $g\rho_d$.

\subsection{Group actions}\label{section: groupaction}

This monodromy information is a specific example of a more general algebraic structure.  Define a \textbf{group action} to be a pair $(G,M)$, where $G$ is a group and $M$ is a $G$-set.  A morphism of group actions is a pair of maps
\[(f_G,f_M):(G,M)\rightarrow (G',M')\]
such that $f_G:G\rightarrow G'$ is a group homomorphism and $f_M:M\rightarrow M'$ is a set map such that, $\forall g\in G$ and $m\in M$
\[ f_M(gm)=f_G(g)f_M(m)\]
A group action $(G,M)$ is \textbf{finitely-generated} if $G$ is finitely-generated as a group and $M$ has finitely many $G$-orbits.  

Any element $g\in G$ determines an \emph{inner} isomorphism of group actions $(ad_g,g\cdot):(G,M)\rightarrow (G,M)$.  Then $G$ acts on the set
\[ Hom( (G',M'),(G,M))\]
by 
postcomposition with the
corresponding inner automorphism.

Applying this to the previous setting, choosing a global trivialization of $\gamma^*\L^d$ gives a morphism of group actions
\begin{equation}
(\rho,\rho_d):(\pi_1(\S),\pi_0(\tM))\rightarrow (SL_2(\k),\V)
\end{equation}
called the (decorated) \textbf{monodromy representation}.  Two different global trivializations give monodromy representations related by the action of $SL_2(\k)$.
\begin{prop}\label{prop: mono}
Monodromy representations determine a bijecton
\[ \left\{ \begin{array}{c}\text{decorated $SL_2(\k)$-local systems}\\\text{ on $(\S,\M)$ up to equivalence}\end{array}\right\}
\stackrel{\sim}{\longrightarrow}
\left\{ \begin{array}{c}\text{group action maps}\\ (\pi_1(\S),\pi_0(\tM))\rightarrow (SL_2(\k),\V)\\ \text{up to the action of $SL_2(\k)$}\end{array}\right\}\]
\end{prop}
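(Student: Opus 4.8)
The plan is to bootstrap from the undecorated statement (Theorem \ref{thm: local=mono}) and treat the decoration data separately. The undecorated part of the correspondence — between $SL_2(\k)$-local systems $\L$ up to equivalence and homomorphisms $\rho:\pi_1(\S)\to SL_2(\k)$ up to conjugation — is exactly Theorem \ref{thm: local=mono}. So the genuine content is to show that, having fixed $\L$ (equivalently $\rho$, after a global trivialization of $\gamma^*\L$ on $\widetilde{\S}$), the decorations on $\L$ are in natural bijection with the equivariant extensions $\rho_d:\pi_0(\tM)\to\V$ satisfying $\rho_d(lm)=\rho(l)\rho_d(m)$, and that this bijection is compatible with the change-of-trivialization $SL_2(\k)$-action.

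First I would construct the forward map, which is essentially the computation already carried out above the statement: pulling a decorated system $\L^d$ back along $\gamma:\widetilde{\S}\to\S$ and trivializing $\gamma^*\L$ (possible since $\widetilde{\S}$ is simply-connected) identifies the global sections with $SL_2(\k)$ and the sections of the associated $\V$-bundle with $\V$; the deck action recovers $\rho$, and the decoration of each component of $\M$, read off on each component of its preimage in $\tM$, produces $\rho_d$. The relation $\rho_d(lm)=\rho(l)\rho_d(m)$ is forced because a single decoration on $m$ pulls back to a flat section over all of $\gamma^{-1}(m)$, whose values on $\pi_1(\S)$-translated components are intertwined by the monodromy. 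I would then record that changing the trivialization by $g\in SL_2(\k)$ sends $(\rho,\rho_d)$ to $(g\rho g^{-1},g\rho_d)$, which is precisely the action of Section \ref{section: groupaction}, so the forward map descends to the stated equivalence classes.

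Next I would construct the inverse. Given $(\rho,\rho_d)$, Theorem \ref{thm: local=mono} produces an $SL_2(\k)$-local system $\L_\rho$; to decorate it, I would define, for each component $m\subset\M$, a section over $m$ by assigning the constant value $\rho_d(\tilde m)\in\V$ on the trivialized pullback over each component $\tilde m$ of $\gamma^{-1}(m)$. The equivariance of $\rho_d$ guarantees two things: that this assignment is $\pi_1(\S)$-equivariant and hence descends to a well-defined flat section of $\L_\rho\times_{SL_2(\k)}\V$ over $m$; and, applied to stabilizer elements $l$ with $l\tilde m=\tilde m$ (i.e.\ $l$ in the image of $\pi_1(m)\to\pi_1(\S)$), that $\rho_d(\tilde m)$ is invariant under the holonomy $\rho(l)$, which is exactly the condition for the constant value to define a genuine section over the (possibly non-simply-connected) component $m$. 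Checking that this is independent of the chosen lift $\tilde m$ (two lifts differ by $\pi_1(\S)$) and inverse to the forward map is then routine.

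The step I expect to be the main obstacle is this decoration-descent in the inverse direction: verifying carefully that the single equivariance relation $\rho_d(lm)=\rho(l)\rho_d(m)$ simultaneously encodes both the gluing of values across a $\pi_1(\S)$-orbit in $\pi_0(\tM)$ and the holonomy-invariance needed for a value on a component with nontrivial fundamental group to extend to an honest flat section over $m$ — and that no further constraint is hidden in the definition of a decoration. Once this is pinned down, compatibility with the $SL_2(\k)$-action and the mutual-inverse check are bookkeeping that parallels the undecorated case of Theorem \ref{thm: local=mono}.
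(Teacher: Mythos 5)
Your proposal is correct and follows essentially the same route as the paper: the forward map is the pullback-and-trivialize construction already described in the text, and the inverse uses Theorem \ref{thm: local=mono} for the underlying local system together with the identification of sections of $\L\times_{SL_2(\k)}\V$ over $U\subset\S$ with $\pi_1(\S)$-equivariant sections over $\gamma^{-1}(U)$, so that the single equivariance condition $\rho_d(lm)=\rho(l)\rho_d(m)$ yields the decoration. The stabilizer/holonomy point you flag as the main obstacle is exactly what that equivariant-section identification handles in the paper's proof (applying equivariance to $l$ with $l\tilde m=\tilde m$ gives the required $\rho(l)$-invariance), so your more explicit treatment of it is a correct unpacking rather than a different argument.
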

\begin{proof}
The map from decorated local systems to group action maps has already been described; we now construct its inverse.  Given a representation \[f:(\pi_1(\S),\pi_0(\tM))\rightarrow (SL_2(\k),\V),\] the map $f_{\pi_1(\S)}:\pi_1(\S)\rightarrow SL_2(\k)$ determines an $SL_2(\k)$-local system $\mathcal{L}$, by Theorem \ref{thm: local=mono}.  

The map $f_{\pi_0(\tM)}:\pi_0(\tM)\rightarrow \V$ determines a section of the stalk $\widetilde{d}\in(\mathcal{L}\times_{SL_2(\k)}\V)_{\tM}$.  For $U\subset \S$, sections of $\mathcal{L}\times_{SL_2(\k)}\V$ over $U$ can be identified with $\pi_1(\S)$-equivariant sections of $\gamma^{-1}U\subset \widetilde{\S}$.   The group action condition implies that $\widetilde{d}$ is $\pi_1(\S)$-equivariant, so it determines an element $d\in (\mathcal{L}\times_{SL_2(k)}\V)_\M$.  This defines a decoration of $\mathcal{L}$.
\end{proof}

\begin{rem}\emph{Naturality.}
This bijection is natural, in that it can be made into an equivalence of the appropriate categories of families.  As a consequence, this induces an isomorphism between the corresponding moduli stacks.
\end{rem}

\begin{rem}\emph{General decorated local systems.}
The definition of decorated local systems works the same if $SL_2(\k)$ is replaced by any group $G$, and $\V$ is replaced by any $G$-set $M$.
\end{rem}

\section{Character algebras}

The decorated $SL_2(\k)$-local systems on $(\S,\M)$ only depend on the finitely-generated group action $(\pi_1(\S), \pi_0(\tM))$.	  This section will pass to the larger generality of an arbitrary finitely-generated group action $(G,M)$, with $(\pi_1(\S),\pi_0(\tM))$ demoted to the role of `motivating example'.

%

\subsection{Algebraic structure}

The first step is to endow the set of group action maps\footnote{These will often be informally referred to as \emph{representations} of $(G,M)$.} $(G,M)\rightarrow (SL_2(\k),\V)$ with the structure of an affine scheme.
\begin{defn}
The \textbf{representation algebra} of $(G,M)$ into $(SL_2(\k),\V)$ is the $\k$-algebra $\Rep(G,M)$ such that, for any commutative $\k$-algebra $A$, there is a  bijection (functorial in $A$)
\[ \left\{ \begin{array}{c}\text{group action maps}\\ (G,M)\rightarrow (SL_2(A),A\otimes \V)\end{array}\right\}
\stackrel{\sim}{\longrightarrow}
\left\{ \begin{array}{c}\text{$\k$-algebra maps}\\ \Rep(G,M)\rightarrow A\end{array}\right\}\]
The affine scheme $Rep(G,M):=Spec(\Rep(G,M))$ is called the \textbf{representation scheme}.
\end{defn}
In particular, the $\k$-points of $Rep(G,M)$ are in bijection with the set of group action maps $(G,M)\rightarrow (SL_2(\k),\V)$. The proof of the existence and uniqueness of $\Rep(G,M)$ will be deferred to Section \ref{section: repalg} (specifically, Proposition \ref{prop: unirep}).
%
%

The group $SL_2(\k)$ acts naturally on $\Rep(G,M)$, and the set of $SL_2(\k)$-equivalence classes of maps $(G,M)\rightarrow (SL_2(\k),\V)$ is the set of $SL_2(\k)$-orbits in $Rep(G,M)$.

\subsection{The character algebra}

\begin{defn}
Define the \textbf{character algebra} of $(G,M)$ into $(SL_2(\k),\V)$ by
\[\Char(G,M):=\Rep(G,M)^{SL_2(\k)}\subset \Rep(G,M),\] the $SL_2(\k)$-invariant subalgebra of $\Rep(G,M)$.  The \textbf{character scheme} is the affine scheme $Char(G,M):=Spec(\Char(G,M))$; it is the algebro-geometric quotient of $Rep(G,M)$ by $SL_2(\k)$.
\end{defn}
A \textbf{character function} is an element of $\Char(G,M)$.  Intuitively, it is an $SL_2(\k)$-invariant function on the set of group action maps $(G,M)\rightarrow (SL_2(\k),\V)$ which can be expressed as a polynomial in coordinates.\footnote{However, this intuition ignores the possibility of nilpotent elements in $\Char(G,M)$.} That is, an element of $\chi\in \Char(\pi_1(\S),\pi_0(\tM))$ assigns to every decorated $SL_2(\k)$-local system $\L^d$ a number $\chi(\L^d)\in \k$, so that $\chi$ varies algebraically with respect to $\L^d$.

\begin{rem}\emph{(Decorated characters?)} When $\M\neq \emptyset$, the $\k$-points of $Char(G,M)$ do not correspond to characters of some group.  The best one can say is as follows.  The $\k$-points of $Char(\S,\M)$ correspond to equivalence classes of the weakest Zariski-closed equivalence relation on the set of maps $(G,M)\rightarrow (SL_2(\k),\V)$ 
such that conjugate pairs are contained in equivalence classes.
\end{rem}

\begin{rem}\emph{(The moduli stack)}
The space of $SL_2(\k)$-orbits in $Rep(G,M)$ is naturally an algebraic stack.
This stack usually won't be representable by a scheme.  However, it still has a universal `schemification' - a scheme through which all maps to schemes factor.  This schemification is the character scheme $Char(G,M)$.
\end{rem}

\subsection{Elementary character functions}

There are two immediate sources of character functions.  First, let $g\in G$.  For a map of group actions
\[ \rho:=(\rho_G,\rho_M):(G,M)\rightarrow (SL_2(\k),\V),\]
define
\begin{equation}
\chi_g(\rho):= tr(\rho_G(g))
\end{equation}
For $a\in SL_2(\k)$,
\[\chi_g(a\cdot \rho)=tr(a\rho(g)a^{-1})=tr(\rho(g))=\chi_g(\rho)\]
and so $\chi_g$ defines a character function for all $g\in G$.

Next, let $p,q\in M$.  Recall that $\omega$ is a fixed $SL_2(\k)$-invariant, skew-symmetric bilinear form on $\V$. For
$\rho$ an arbitrary representation, define
\begin{equation} \chi_{(p,q)}(\rho):= \omega(\rho_M(p),\rho_M(q))
\end{equation}
For $a\in SL_2(\k)$,
\[\chi_{(p,q)}(a\cdot \rho)=\omega(a\rho_M(p),a\rho_M(q))=\omega(\rho_M(p),\rho_M(q))=\chi_{(p,q)}(\rho)\]
and so $\chi_{(p,q)}$ defines a character function for all $g\in G$.

\subsection{Relations between elementary character functions.}

\begin{prop}\label{prop: relations}
The following relations hold between character functions of the form $\chi_g$ and $\chi_{(p,q)}$.
\begin{enumerate}
\item $\chi_e=2$, for $e$ the identity in $G$.
\item $\chi_{(p,q)}=-\chi_{(q,p)}$.
\item $\chi_{(gp,gq)}=\chi_{(p,q)}$.
\item $\chi_g\chi_h=\chi_{gh}+\chi_{g^{-1}h}$.
\item $\chi_g\chi_{(p,q)} = \chi_{(gp,q)}+\chi_{(g^{-1}p,q)}$.
\item $\chi_{(p,q)}\chi_{(p',q')}=\chi_{(p,q')}\chi_{(p',q)}+\chi_{(p,p')}\chi_{(q,q')}$.
\end{enumerate}
\end{prop}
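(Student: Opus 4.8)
The plan is to verify each of the six relations directly, by evaluating both sides on an arbitrary representation $\rho=(\rho_G,\rho_M)\colon(G,M)\to(SL_2(\k),\V)$ and reducing to identities of $2\times 2$ matrices and vectors in $\V$. Relations (1)–(3) are immediate: (1) follows since $\rho_G(e)=I$ and $tr(I)=2$; (2) follows from the skew-symmetry of $\omega$; and (3) follows because $\rho_M(gp)=\rho_G(g)\rho_M(p)$ together with the $SL_2(\k)$-invariance of $\omega$, which is exactly the argument already used to show $\chi_{(p,q)}$ is a character function. Relation (4) is the classical $SL_2$ trace identity $tr(A)tr(B)=tr(AB)+tr(A^{-1}B)$, valid for $A,B\in SL_2(\k)$; it is stated in Theorem \ref{thm: invBH} and I would simply cite it, the one-line proof being the Cayley–Hamilton identity $A+A^{-1}=tr(A)\,I$ multiplied by $B$ and traced.

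The heart of the matter is the pair of genuinely new relations (5) and (6), which mix the two kinds of generators. For (5), I would set $A:=\rho_G(g)$, $u:=\rho_M(p)$, $w:=\rho_M(q)$, so that the claim becomes the vector identity
\[
tr(A)\,\omega(u,w)=\omega(Au,w)+\omega(A^{-1}u,w).
\]
This follows by applying the linear form $\omega(-,w)$ to Cayley–Hamilton in the form $Au+A^{-1}u=tr(A)\,u$, which holds for every $A\in SL_2(\k)$ and every $u\in\V$. For (6), writing $u,w,u',w'$ for the four decorations, the claim is
\[
\omega(u,w)\,\omega(u',w')=\omega(u,w')\,\omega(u',w)+\omega(u,u')\,\omega(w,w').
\]
This is the Plücker / $\mathfrak{sl}_2$ fundamental relation expressing that $\omega$ is the essentially unique invariant on pairs in a $2$-dimensional space; concretely it is the statement that the $2\times 2$ minors of the matrix with columns $u,u',w,w'$ satisfy the Grassmann–Plücker quadratic relation, equivalently that $\omega\wedge\omega=0$ as a form on $\V^{\oplus 4}$ since $\dim\V=2$. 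I would prove it by expanding both sides in a fixed basis where $\omega$ is the standard symplectic form, reducing each side to an explicit polynomial in the eight coordinates and checking equality.

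The main obstacle is relation (6): unlike the others it is not a one-line consequence of Cayley–Hamilton, and one must be careful that it is an identity of \emph{functions} on $Rep(G,M)$, not merely a pointwise numerical coincidence. The cleanest route will be to phrase it invariant-theoretically — recognizing the left and right sides as the two ways of contracting $u\otimes w\otimes u'\otimes w'$ with invariant tensors, and using that for $\dim\V=2$ the space of $SL_2(\k)$-invariants of degree one in each of four vectors is spanned by products of two $\omega$-pairings, among which the single syzygy is precisely (6). Since every individual verification reduces to a finite identity of polynomials in matrix and vector entries, all six relations hold universally over any commutative $\k$-algebra $A$, and hence as relations in $\Char(G,M)$; I would remark this at the end to confirm the relations are scheme-theoretic and not merely valid on $\k$-points.
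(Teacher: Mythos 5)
Your proposal is correct and follows essentially the same route as the paper's own proof: relations (1)--(3) from the basic properties of the trace and of $\omega$, relations (4)--(5) from the Cayley--Hamilton identity $g+g^{-1}=tr(g)\cdot Id_\V$, and relation (6) as the quadratic Pl\"ucker identity verified by direct expansion in $\omega$-canonical coordinates. Your closing remark that each verification is a polynomial identity, hence valid over any commutative $\k$-algebra, is a harmless strengthening of the same coordinate computation the paper performs.
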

\begin{proof}Relation $(1)$ is $tr(Id_{\V})=\dim(\V)=2$.  Relation $(2)$ is the anti-symmetry of $\omega$. Relation $(3)$ is the $SL_2(\k)$-invariance of $\omega$.  Relations $(4)$ and $(5)$ use the following lemma.
\begin{lemma}
Let $g\in SL_2(\k)$.  Then $g+g^{-1}=tr(g)\cdot Id_\V$.
\end{lemma}
\begin{proof}[Proof of lemma]
The matrix $g$ satisfies its own characteristic polynomial; that is,
\[ g^2-tr(g)\cdot g + Id_\V=0\]
Multiplying by $g^{-1}$ gives the desired identity.
\end{proof}
Then Relation $(4)$ is
\begin{equation}
tr(g)tr(h) = tr(tr(g)h) = tr(gh+g^{-1}h) = tr(gh)+tr(g^{-1}h)
\end{equation}
and Relation $(5)$ is
\begin{equation}
tr(g)\omega(p,q)=\omega(tr(g)p,q) = \omega(gp+g^{-1}p,q)=\omega(gp,q)+\omega(g^{-1}p,q)
\end{equation}
Relation (6) is equivalent to the `quadratic Pl\"ucker relation'.
To see this, choose $\omega$-canonical coordinates $x,y$ on $\V$; that is, 
\[\omega(p,q)=x_py_q-x_qy_p=\left|\begin{array}{cc} x_p & x_q \\ y_p & y_q\end{array}\right|,\forall p,q\in \V\]
Then Relation (6) becomes
\[
\left|\begin{array}{cc} x_{p} & x_{q} \\ y_{p} & y_{q}\end{array}\right|
\left|\begin{array}{cc} x_{p'} & x_{q'} \\ y_{p'} & y_{q'}\end{array}\right|=
\left|\begin{array}{cc} x_{p} & x_{q'} \\ y_{p} & y_{q'}\end{array}\right|
\left|\begin{array}{cc} x_{p'} & x_{q} \\ y_{p'} & y_{q}\end{array}\right|+
\left|\begin{array}{cc} x_{p} & x_{p'} \\ y_{p} & y_{p'}\end{array}\right|
\left|\begin{array}{cc} x_{q} & x_{q'} \\ y_{q} & y_{q'}\end{array}\right|
\]
which can be verified directly.
\end{proof}

These relations formally imply several other relations.
\begin{coro}\label{coro: relations}
The following relations also hold between character functions of the form $\chi_g$ and $\chi_{(p,q)}$.
\begin{enumerate}
\setcounter{enumi}{6}
\item $\chi_{p,p}=0$.
\item $\chi_{g^{-1}}=\chi_g$.
\item $\chi_{hgh^{-1}}=\chi_g$.
\item $\chi_{g^i}=2T_i(\chi_g/2)$, where $T_i$ is the \emph{$i$th Chebyshev polynomial}.\footnote{These are defined by $T_0(x)=1$, $T_1(x)=x$ and $T_{i+1}(x)=2xT_i(x)-T_{i-1}(x)$.}
\end{enumerate}
\end{coro}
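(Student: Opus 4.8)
The plan is to derive each of (7)--(10) as a purely formal algebraic consequence of the relations (1)--(6) of Proposition \ref{prop: relations}, working in the listed order so that later identities may invoke earlier ones. First I would dispatch the two easiest. Relation (7) is immediate from (2): setting $q=p$ gives $\chi_{(p,p)}=-\chi_{(p,p)}$, whence $2\chi_{(p,p)}=0$ and so $\chi_{(p,p)}=0$ (we work over $\k=\mathbb{C}$, where $2$ is invertible). Relation (8) follows by specializing (4) at $h=e$ and using (1): the identity $\chi_g\chi_e=\chi_{ge}+\chi_{g^{-1}e}$ reads $2\chi_g=\chi_g+\chi_{g^{-1}}$, so $\chi_{g^{-1}}=\chi_g$.

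For (9) I would first establish the cyclic symmetry $\chi_{gh}=\chi_{hg}$, which is the real content. Comparing (4) with its analogue under swapping $g$ and $h$ (using commutativity of the product) gives $\chi_{gh}+\chi_{g^{-1}h}=\chi_{hg}+\chi_{h^{-1}g}$. Since (8) yields $\chi_{g^{-1}h}=\chi_{(g^{-1}h)^{-1}}=\chi_{h^{-1}g}$, the two middle terms cancel and leave $\chi_{gh}=\chi_{hg}$. Conjugation invariance then follows at once by applying this with $a=hg$: $\chi_{hgh^{-1}}=\chi_{(hg)h^{-1}}=\chi_{h^{-1}(hg)}=\chi_g$.

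Finally, (10) I would prove by induction on $i$, using (4) in the form $\chi_g\chi_{g^i}=\chi_{g^{i+1}}+\chi_{g^{i-1}}$, that is, $\chi_{g^{i+1}}=\chi_g\,\chi_{g^i}-\chi_{g^{i-1}}$. Writing $x=\chi_g$, the base cases $\chi_{g^0}=\chi_e=2=2T_0(x/2)$ and $\chi_{g^1}=x=2T_1(x/2)$ hold, and the recursion above matches the defining Chebyshev recurrence $T_{i+1}(t)=2tT_i(t)-T_{i-1}(t)$ evaluated at $t=x/2$ once the common factor of $2$ is cleared.

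The only genuinely non-obvious step is the cyclic symmetry underlying (9): it is tempting to attack $\chi_{hgh^{-1}}$ directly with (4), but each such expansion introduces a fresh term such as $\chi_{h^{-1}gh^{-1}}$ that does not visibly simplify. The clean route is to first extract $\chi_{gh}=\chi_{hg}$ from commutativity of multiplication together with (8), and only afterwards deduce conjugation invariance. I expect this to be the main (if modest) obstacle; the remaining identities are routine specializations and a one-line induction.
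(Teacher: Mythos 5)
Your proposal is correct, and since the paper offers no proof of this corollary (it merely asserts that relations (1)--(6) ``formally imply'' it), your derivations supply exactly the intended formal argument: specializing (2) at $q=p$, specializing (4) at $h=e$, extracting $\chi_{gh}=\chi_{hg}$ from commutativity together with (8) before deducing conjugation invariance, and the Chebyshev recursion $\chi_{g^{i+1}}=\chi_g\chi_{g^i}-\chi_{g^{i-1}}$ with the base cases $\chi_e=2$ and $\chi_g$. All steps check out, including the cancellation $\chi_{g^{-1}h}=\chi_{(g^{-1}h)^{-1}}=\chi_{h^{-1}g}$ and the invertibility of $2$ in $\k=\mathbb{C}$ used for (7).
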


\subsection{A presentation of $\Char(G,M)$}

The main result of this paper will be that the elementary character functions $\{\chi_g\}$ and $\{\chi_{(p,q)}\}$ generate $\Char(G,M)$, with all relations generated by those in Proposition \ref{prop: relations}.  We formalize this as follows.

\begin{defn}\label{defn: H^+(G,M)}
Let $H^+(G,M)$ denote\footnote{The notation $H^+(G,M)$ is chosen to match \cite{BH95} and \cite{PS00}.} the commutative $\k$-algebra generated by symbols
\begin{itemize}
\item $[g]$, for all $g\in G$, and
\item $[p,q]$, for all $p,q\in M$,
\end{itemize}
with relations generated by...
\begin{enumerate}
\item $[e]-2$.
\item $[p,q]+[q,p]$.
\item $[gp,gq]-[p,q]$.
\item $[g][h]-[gh]-[gh^{-1}]$.
\item $[g][p,q]-[gp,q]-[p,gq]$.
\item $[p,q][p',q']-[p,q'][p',q]-[p,p'][q,q']$.
\end{enumerate}
\end{defn}
By Proposition \ref{prop: relations}, there is an algebra map
\[ \chi:H^+(G,M)\rightarrow \Char(G,M)\]
such that $\chi([g])=\chi_g$ and $\chi([p,q])= \chi_{(p,q)}$.

\begin{thm}\label{thm: main}
The map $\chi:H^+(G,M)\rightarrow \Char(G,M)$ is an isomorphism.
\end{thm}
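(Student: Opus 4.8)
The plan is to deduce Theorem \ref{thm: main} from the explicit presentation of the algebra of mixed invariants (Theorem \ref{thm: invpres}) by realizing $\Char(G,M)$ as a concrete quotient of a ring of mixed invariants. First I would reduce to a \emph{free} group action. Choose generators $g_1,\dots,g_k$ of $G$ and orbit representatives $m_1,\dots,m_n$ of $M$, and let $(F,N)$ be the free group action they generate ($F$ free on the $g_i$, and $N=\bigsqcup_i F\cdot m_i$ a free $F$-set). Since $N$ is a free $F$-set and $F$ is free, a group-action map $(F,N)\to(SL_2(\k),\V)$ is an unconstrained tuple of matrices and vectors, so $Rep(F,N)\cong SL_2(\k)^k\times\V^n$ and hence
\[\Char(F,N)=\k[SL_2(\k)^k\times\V^n]^{SL_2(\k)}.\]
Because each $\det(\mathsf A_i)-1$ is an invariant and $SL_2(\k)$ is reductive, the Reynolds operator identifies this with the quotient of the mixed invariants $\inv{k}{n}^{SL_2}$ by the ideal generated by the $\det(\mathsf A_i)-1$.

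Next I would set up the dictionary between the trace generators of Theorem \ref{thm: invpres} and the elementary character functions. Under $\oute{j}{k}=v_jv_k^\perp$ one computes $tr(\oute{j}{k})=\omega(v_j,v_k)$ and, more generally, $tr(\mathbf A\,\oute{j}{k})=\omega(\rho(\mathbf A)v_j,v_k)$, while words with no outer-product factor give the matrix traces $\chi_g$. The third relation of Theorem \ref{thm: invpres} lets me split any trace containing two or more outer products into a product of traces each containing exactly one, so every generator $tr(\mathbf A)$ of $\inv{k}{n}^{SL_2}$ becomes a monomial in the $\chi_g$ and the $\chi_{(p,q)}$ (recall $\chi_{(wm_i,\,w'm_j)}=\omega(\rho(w)v_i,\rho(w')v_j)$, which by invariance of $\omega$ is again of the form $tr(\mathbf A\,\oute{i}{j})$). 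Since the restriction $\inv{k}{n}^{SL_2}\to\Char(F,N)\to\Char(G,M)$ is surjective, this is exactly the first-fundamental-theorem content and gives surjectivity of $\chi$.

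For injectivity I would translate relations. Passing to $\det(\mathsf A_i)=1$ supplies the Cayley--Hamilton identity $\mathsf A+\mathsf A^{-1}=tr(\mathsf A)\,Id_\V$, which is what converts Procesi's $F$-relation and the two outer-product relations of Theorem \ref{thm: invpres} into relations (4), (5) and (6) of Definition \ref{defn: H^+(G,M)}, the $F$-relation yielding the quadratic Pl\"ucker relation (6); relations (1)--(3) are the normalizations $tr(Id_\V)=2$ together with the antisymmetry and $SL_2(\k)$-invariance of $\omega$ already recorded in Proposition \ref{prop: relations}. Indeed relation (4), imposed for all $g,h$, simultaneously encodes both the multiplicative structure of traces and (taking $g=h$) the condition $\det=1$, so $H^+(F,N)$ already ``sees'' $SL_2(\k)$. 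Matching the two presentations identifies $H^+(F,N)$ with $\inv{k}{n}^{SL_2}/(\det\mathsf A_i-1)=\Char(F,N)$. Finally I would descend to the original $(G,M)$: the scheme $Rep(G,M)$ is the closed subscheme of $SL_2(\k)^k\times\V^n$ cut out by the group relations $\rho(r)=Id_\V$ (for $r$ a relator of $G$) and the orbit-coincidence relations of $M$. By reductivity $\Char(G,M)$ is the quotient of $\Char(F,N)$ by the invariant part of this ideal, and the same relations, read combinatorially, turn $H^+(F,N)$ into $H^+(G,M)$; matching the two quotients completes the argument.

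The crux of the whole program is of course Theorem \ref{thm: invpres} itself, whose proof (running through the mixed concomitants of Theorem \ref{thm: conpres}) is the real labor. Granting it, the main obstacle in the present deduction is the bookkeeping of the two descents: I expect the delicate point to be verifying that no relations are \emph{lost or created} when passing from $\inv{k}{n}^{SL_2}$ first to its quotient by $(\det\mathsf A_i-1)$ and then by the group/orbit ideal. Concretely, one must check that the invariant part of each relation ideal is generated by the expected combinatorial relations, which is precisely where reductivity of $SL_2(\k)$ and the explicit forms of the $F$-relation and the outer-product relations must be used together.
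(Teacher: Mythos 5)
Your architecture parallels the paper's (reduce to a free action, use Theorem \ref{thm: invpres}, then descend), and the free-case setup is sound: since the $\det(\mathsf{A}_i)-1$ are themselves invariant, Lemma \ref{lemma: invrel} does give $\Char(F,N)=\inv{k}{n}^{SL_2}/(\det(\mathsf{A}_i)-1)$, exactly as Remark \ref{rem: gettinginvariant} records. The surjectivity argument via splitting traces with two or more outer products is also correct. But injectivity is where you assert rather than prove: ``matching the two presentations identifies $H^+(F,N)$ with $\Char(F,N)$'' hides the real work. The dictionary from trace generators to bracket monomials depends on how a word $\mathbf{A}$ is written and on a choice of cyclic splitting, so one must verify it is well defined modulo the listed relations and, conversely, that \emph{every} relation among traces is a consequence of (1)--(6) of Definition \ref{defn: H^+(G,M)}. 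The paper does this by constructing an explicit trace-like retraction $\tau:\mathcal{T}_GM^2\dashrightarrow H^+(G,M)$, proving it descends through the relations of Theorem \ref{thm: matcharpres}, and checking $\tau\chi=2\cdot\mathrm{id}$ and $\chi\tau=tr$; that computation \emph{is} the proof of injectivity, and nothing in your proposal substitutes for it. (A small symptom: with the paper's conventions $tr(\oute{j}{k})=-\omega(v_j,v_k)$, not $+\omega(v_j,v_k)$; such signs are exactly what the well-definedness check must absorb.)

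The second, more serious gap is the descent from $(F,N)$ to $(G,M)$. It is true by reductivity that $\Char(G,M)=\Char(F,N)/I^{SL_2}$, but the ideal $I$ is generated by the \emph{entries} $\varphi\left(\X_{i_1}\cdots\X_{i_j}-Id_\V\right)$ and $\varphi\left((\X_{j_1}\cdots\X_{j_k}-Id_\V)\mathbf{v}_j\right)$, which are not $SL_2$-invariant. Hence Lemma \ref{lemma: invrel} does not apply, and reductivity alone gives no generating set for $I^{SL_2}$: extra invariant relations could a priori appear, which is precisely the obstruction the paper's proof outline names. You correctly flag this as ``the delicate point'' but supply no mechanism to resolve it, and ``the same relations, read combinatorially'' is exactly the statement in need of proof. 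The paper's essential idea, absent from your proposal, is to tensor with $End(\V)$ and restrict to the \emph{even} subalgebra $\con{m}{n}^e$, where the group relations become invariant matrix-valued elements and the odd vector-valued stabilizer relations become invariant after multiplication by $\oute{j}{j'}$; only then does Lemma \ref{lemma: invrel} apply (Corollary \ref{coro: matcharpres}), after which $\Char(G,M)$ is recovered as the $\iota$-fixed (trace) part. Without this step or an equivalent, your argument is complete only when $G$ is free and $M$ is a free $G$-set.
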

\begin{proof}[Proof Outline] The proof will be contained in Sections 5-7, but we outline the idea here.   If $G$ is a free group on $m$ generators and $M$ is a free $G$-set of $n$ orbits, then $Rep(G,M)=SL_2(\k)^m\oplus \V^n$.  This is an $SL_2$-fixed subvariety of the affine space $End(\V)^m\oplus \V^n$.  The invariant functions $\inv{m}{n}^{SL_2}$ on $End(\V)^m\oplus \V^n$ were studied in \cite[Section 12]{Pro76} under the name \emph{mixed invariants}.  A nice presentation for the mixed invariants is obtained (Theorem \ref{thm: invpres}), which can be used to present $\Char(G,M)$ when $G$ and $M$ are both free (see Remark \ref{rem: gettinginvariant}).

When $G$ and $M$ are not free, a choice of $m$ generators for $G$ and $n$ generators for $M$ realizes $Rep(G,M)$ as an $SL_2$-fixed subvariety of $End(\V)^m\oplus \V^n$, corresponding to the image of the generators in $SL_2(\k)$ and $\V$, respectively.  The problem that arises is that the natural relations defining $Rep(G,M)$ in $End(\V)^m\oplus \V^n$ are not $SL_2$-invariant, and so more relations might appear in the invariant subalgebra.

This is solved by considering the algebra $\con{m}{n}^{SL_2}$ of $SL_2$-equivariant maps from $End(\V)^m\oplus \V^n$ to $End(\V)$, the algebra of \emph{mixed concomitants}.  When extended to this non-commutative algebra, the defining equations of $Rep(G,M)$ become $SL_2$-invariant, and so it is possible to present $(End(\V)\otimes \Rep(G,M))^{SL_2}$ (Theorem \ref{thm: matcharpres}).  Then, by taking traces of these elements, we present $\Char(G,M)$.
\end{proof}

\begin{ex}When $M=\emptyset$, then Relations (2), (3), (5), and (6) are vacuous, and the resulting presentation coincides with Brumfiel and Hilden's presentation (Theorem \ref{thm: invBH}).
\end{ex}
\begin{ex}When $G=\{e\}$ and $M=\{1,...,n\}$, then Relations (3), (4) and (5) are redundant, and the character algebra is the homogeneous coordinate ring of the Grassmannian $Gr(2,n)$.  The Pl\"ucker coordinates $x_{ij}$ correspond to $\chi_{(i,j)}$.
\end{ex}
\begin{rem}
These last two examples can be interpreted as saying that general character algebras $\Char(G,M)$ interpolate between Brumfiel and Hilden's $SL_s(\k)$-representation algebras $H^+(G)$ and the homogeneous coordinate rings of Grassmannians.
\end{rem}
\begin{ex}
For $G=\mathbb{Z}$ and $M=\{a,b\}$ (two $G$-invariant elements), the algebra $\Char(G,M)$ is generated by $\chi_1$ and $\chi_{(a,b)}$, with the single relation $(\chi_1+2)\chi_{(a,b)}=0$.  The remaining elementary character functions can be expressed in terms of these two; for example, $\chi_{i}=2T_i(\chi_1/2)$ and $\chi_{(b,a)}=-\chi_{(a,b)}$.

The corresponding scheme $Char(G,M)$ is two affine lines crossing transversely.  The two irreducible components of this scheme correspond to maps $\rho:(G,M)\rightarrow(SL_2(\k),\V)$ such that either
\begin{itemize}
\item $\rho_G(1)=Id_\V$ (when $\chi_1=tr(\rho(1))=2$), or 
\item $\rho_M(a)$ and $\rho_M(b)$ are linearly-dependant (when $\chi_{(a,b)}=\omega(\rho(a),\rho(b))=0$).
\end{itemize}
Since $\rho_M(a)$ and $\rho_M(b)$ are invariant vectors of $\rho_G(1)$, it is clear one of these two conditions must be satisfied.
\end{ex}

\section{The graphical nature of the character algebra}\label{section: graphical}

In this section, we return to the topological setting of $(G,M)=(\pi_1(\S),\pi_0(\tM))$, and hence to character functions for decorated $SL_2(\k)$-local systems.  The elementary character functions naturally correspond to homotopy classes of oriented curves in $\S$, and the relations from the previous section can be graphically encoded.  

\subsection{Character functions of oriented curves}

A \emph{curve} in $(\S,\M)$ will be a closed immersion $c:\mathcal{C}\rightarrow \S$ of a connected, 1-dimensional manifold with boundary, such that any endpoints of $\mathcal{C}$ land in $\M$.  An \emph{orientation} of $c$ is an orientation of $\mathcal{C}$.  

There are two kinds of curves in $(\S,\M)$.
\begin{itemize}
\item \emph{Loops} $l$, where $\mathcal{C}$ is homeomorphic to the circle $S^1$,
\item \emph{Arcs} $a$, where $\mathcal{C}$ is homeomorphic to the interval $[0,1]$ and $c(0),c(1)\in \M$.
\end{itemize}

A character function $\chi_c\in \Char(\S,\M)$ can be associated to any oriented curve $c$, as follows.
\begin{itemize}
\item If $c$ is an oriented loop $l$, choose a basepoint $p\in l$.  Then $l$ determines an element $\pi_1(\S,p)$, and so it determines a character function $\chi_{l}$.  This function is well-defined, since $\chi_l=\chi_{glg^{-1}}$.  It is the trace of the monodromy around $l$.
\item If $c$ is an oriented arc $a$, choose a lift $\widetilde{a}$ of $a$ to the universal cover $\widetilde{\S}$.  Let $p,q\in \tM$ be the endpoints of $\widetilde{a}$; this determines a character function $\chi_a:=\chi_{p,q}$.  Note that a different lift of $a$ will have endpoints $gp$ and $gq$ (for some $g$), and so $\chi_a$ is well-defined.  This character function amounts to using $a$ to parallel transport the decoration from one endpoint to the other, and using $\omega$ to compare the decorations.
\end{itemize}
In both cases, the character function $\chi_c$ only depends on $c$ up to homotopy (where the endpoints are required to stay in $\M$).  

For a finite collection of oriented curves $\mathbf{c}=\{c_1,c_2,...,c_i\}$, the associated character function is the product of the corresponding elementary character functions.
\[ \chi_{\mathbf{c}} = \chi_{c_1}\chi_{c_2}...\chi_{c_i}\]

\subsection{Graphical relations} Let $Curve(\S,\M)$ denote the $\k$-algebra spanned by the set of finite collections of oriented curves in $(\S,\M)$ (up to homotopy).  Then 
\[\chi:Curve(\S,\M)\rightarrow \Char(\S,\M)\] defines a map of algebras.  Since every elementary character function can be written in the form $\chi_c$, Theorem \ref{thm: main} shows that $\Char(\S,\M)$ is a quotient of $Curve(\S,\M)$ with kernel determined by the relations in $\Char(\S,\M)$.  The generators of this kernel can be interpreted as graphical rules; this gives a complete set of rules for manipulating collections of curves without changing the corresponding element of the character algebra.


\begin{center} \underline{Graphical rules for characters of oriented curves}\end{center}
\begin{enumerate}
\item A contractible loop is equal to $2$ (Proposition \ref{prop: relations}, $(1)$). 
\[
\begin{tikzpicture}[inner sep=0.5mm,scale=1,auto]
	\draw[->] (0,0) arc (0:360:.2);
\end{tikzpicture}
=2\]

\item A contractible arc is  equal to $0$ (Corollary \ref{coro: relations}, $(7)$).
\[
\begin{tikzpicture}[inner sep=0.5mm,scale=1,auto]
	\node (p) at (0,0) {$\circ$};
	
	\draw[->] (0,0) to [out=-45,in=-90] (.6,0);
	\draw (.6,0) to [out=90, in=45] (0,0);
\end{tikzpicture}
=0\]

\item An oriented loop is equal to its orientation-reversal (Corollary \ref{coro: relations}, $(8)$).
\[
\begin{tikzpicture}[inner sep=0.5mm,scale=1,auto]
	\node (3) at (-.25,.35)  {};
	\node (4) at (-.25,-.35)  {};
	
	\draw[->] (4) arc (-45:0:.5);
	\draw (-.1,0) arc (0:45:.5);
	\draw[dashed] (3) arc (45:315:.5);
\end{tikzpicture}
=
\begin{tikzpicture}[inner sep=0.5mm,scale=1,auto]
	\node (3) at (-.25,.35)  {};
	\node (4) at (-.25,-.35)  {};
	
	\draw (4) arc (-45:0:.5);
	\draw[<-] (-.1,0) arc (0:45:.5);
	\draw[dashed] (3) arc (45:315:.5);
\end{tikzpicture}
\]

\item An oriented arc is negative its orientation-reversal (Proposition \ref{prop: relations}, $(2)$).
\[
\begin{tikzpicture}[inner sep=0.5mm,scale=1,auto]
	\node (3) at (-.25,.35)  {};
	\node (4) at (-.25,-.35)  {};
	\node (p) at (-.95,.35) {$\circ$};
	\node (q) at (-.95,-.35) {$\circ$};
	
	\draw[->] (4) arc (-45:0:.5);
	\draw (-.1,0) arc (0:45:.5);
	\draw[dashed] (3) arc (45:135:.5);
	\draw[dashed] (4) arc (-45:-135:.5);
\end{tikzpicture}
= - 
\begin{tikzpicture}[inner sep=0.5mm,scale=1,auto]
	\node (3) at (-.25,.35)  {};
	\node (4) at (-.25,-.35)  {};
	\node (p) at (-.95,.35) {$\circ$};
	\node (q) at (-.95,-.35) {$\circ$};
	
	\draw (4) arc (-45:0:.5);
	\draw[<-] (-.1,0) arc (0:45:.5);
	\draw[dashed] (3) arc (45:135:.5);
	\draw[dashed] (4) arc (-45:-135:.5);
\end{tikzpicture}
\]

\item There are relations for pairs of curves, by choosing nearby segments on each curve and summing over the two other ways to connect them.  

Two loops (Proposition \ref{prop: relations}, $(4)$).
\[
\begin{tikzpicture}[inner sep=0.5mm,scale=1,auto]
	\node (1) at (.25,.35)  {};
	\node (2) at (.25,-.35)  {};
	\node (3) at (-.25,.35)  {};
	\node (4) at (-.25,-.35)  {};
	
	\draw[->] (4) arc (-45:0:.5);
	\draw (-.1,0) arc (0:45:.5);
	\draw[dashed] (3) arc (45:315:.5);
	\draw[->] (2) arc (225:180:.5);
	\draw (.1,0) arc (180:135:.5);
	\draw[dashed] (1) arc (135:-135:.5);
\end{tikzpicture}
=
\begin{tikzpicture}[inner sep=0.5mm,scale=1,auto]
	\node (1) at (.25,.35)  {};
	\node (2) at (.25,-.35)  {};
	\node (3) at (-.25,.35)  {};
	\node (4) at (-.25,-.35)  {};
	
	\draw[->] (4) to [out=45,in=180] (0,-.15);
	\draw (3) to [out=-45,in=180] (0,.15);
	\draw[dashed] (3) arc (45:315:.5);
	\draw[->] (1) to [out=225,in=0] (0,.15);
	\draw (2) to [out=135,in=0] (0,-.15);
	\draw[dashed] (1) arc (135:-135:.5);
\end{tikzpicture}
+
\begin{tikzpicture}[inner sep=0.5mm,scale=1,auto]
	\node (1) at (.25,.35)  {};
	\node (2) at (.25,-.35)  {};
	\node (3) at (-.25,.35)  {};
	\node (4) at (-.25,-.35)  {};
	
	\draw[->] (4) to [out=45,in=225] (.13,.20);
	\draw (3) to [out=-45,in=135] (-.13,.20);
	\draw[dashed] (3) arc (45:315:.5);
	\draw (1) to [out=225,in=45] (.13,.20);
	\draw[->] (2) to [out=135,in=-45] (-.13,.20);
	\draw[dashed] (1) arc (135:-135:.5);
\end{tikzpicture}
\]

\item An arc and a loop (Proposition \ref{prop: relations}, $(5)$).
\[
\begin{tikzpicture}[inner sep=0.5mm,scale=1,auto]
	\node (1) at (.25,.35)  {};
	\node (2) at (.25,-.35)  {};
	\node (3) at (-.25,.35)  {};
	\node (4) at (-.25,-.35)  {};
	\node (p) at (-.95,.35) {$\circ$};
	\node (q) at (-.95,-.35) {$\circ$};
	
	\draw[->] (4) arc (-45:0:.5);
	\draw (-.1,0) arc (0:45:.5);
	\draw[dashed] (3) arc (45:135:.5);
	\draw[dashed] (4) arc (-45:-135:.5);
	\draw[->] (2) arc (225:180:.5);
	\draw (.1,0) arc (180:135:.5);
	\draw[dashed] (1) arc (135:-135:.5);
\end{tikzpicture}
=
\begin{tikzpicture}[inner sep=0.5mm,scale=1,auto]
	\node (1) at (.25,.35)  {};
	\node (2) at (.25,-.35)  {};
	\node (3) at (-.25,.35)  {};
	\node (4) at (-.25,-.35)  {};
	\node (p) at (-.95,.35) {$\circ$};
	\node (q) at (-.95,-.35) {$\circ$};
	
	\draw[->] (4) to [out=45,in=180] (0,-.15);
	\draw (3) to [out=-45,in=180] (0,.15);
	\draw[dashed] (3) arc (45:135:.5);
	\draw[dashed] (4) arc (-45:-135:.5);
	\draw[->] (1) to [out=225,in=0] (0,.15);
	\draw (2) to [out=135,in=0] (0,-.15);
	\draw[dashed] (1) arc (135:-135:.5);
\end{tikzpicture}
+
\begin{tikzpicture}[inner sep=0.5mm,scale=1,auto]
	\node (1) at (.25,.35)  {};
	\node (2) at (.25,-.35)  {};
	\node (3) at (-.25,.35)  {};
	\node (4) at (-.25,-.35)  {};
	\node (p) at (-.95,.35) {$\circ$};
	\node (q) at (-.95,-.35) {$\circ$};
	
	\draw[->] (4) to [out=45,in=225] (.13,.20);
	\draw (3) to [out=-45,in=135] (-.13,.20);
	\draw[dashed] (3) arc (45:135:.5);
	\draw[dashed] (4) arc (-45:-135:.5);
	\draw (1) to [out=225,in=45] (.13,.20);
	\draw[->] (2) to [out=135,in=-45] (-.13,.20);
	\draw[dashed] (1) arc (135:-135:.5);
\end{tikzpicture}
\]

\item Two arcs (Proposition \ref{prop: relations}, $(6)$).
\[
\begin{tikzpicture}[inner sep=0.5mm,scale=1,auto]
	\node (1) at (.25,.35)  {};
	\node (2) at (.25,-.35)  {};
	\node (3) at (-.25,.35)  {};
	\node (4) at (-.25,-.35)  {};
	\node (p) at (-.95,.35) {$\circ$};
	\node (q) at (-.95,-.35) {$\circ$};
	\node (p') at (.95,.35) {$\circ$};
	\node (q') at (.95,-.35) {$\circ$};
	
	\draw[->] (4) arc (-45:0:.5);
	\draw (-.1,0) arc (0:45:.5);
	\draw[dashed] (3) arc (45:135:.5);
	\draw[dashed] (4) arc (-45:-135:.5);
	\draw[->] (2) arc (225:180:.5);
	\draw (.1,0) arc (180:135:.5);
	\draw[dashed] (1) arc (135:45:.5);
	\draw[dashed] (2) arc (-135:-45:.5);
\end{tikzpicture}
=
\begin{tikzpicture}[inner sep=0.5mm,scale=1,auto]
	\node (1) at (.25,.35)  {};
	\node (2) at (.25,-.35)  {};
	\node (3) at (-.25,.35)  {};
	\node (4) at (-.25,-.35)  {};
	\node (p) at (-.95,.35) {$\circ$};
	\node (q) at (-.95,-.35) {$\circ$};
	\node (p') at (.95,.35) {$\circ$};
	\node (q') at (.95,-.35) {$\circ$};
	
	\draw[->] (4) to [out=45,in=180] (0,-.15);
	\draw (3) to [out=-45,in=180] (0,.15);
	\draw[dashed] (3) arc (45:135:.5);
	\draw[dashed] (4) arc (-45:-135:.5);
	\draw[->] (1) to [out=225,in=0] (0,.15);
	\draw (2) to [out=135,in=0] (0,-.15);
	\draw[dashed] (1) arc (135:45:.5);
	\draw[dashed] (2) arc (-135:-45:.5);
\end{tikzpicture}
+
\begin{tikzpicture}[inner sep=0.5mm,scale=1,auto]
	\node (1) at (.25,.35)  {};
	\node (2) at (.25,-.35)  {};
	\node (3) at (-.25,.35)  {};
	\node (4) at (-.25,-.35)  {};
	\node (p) at (-.95,.35) {$\circ$};
	\node (q) at (-.95,-.35) {$\circ$};
	\node (p') at (.95,.35) {$\circ$};
	\node (q') at (.95,-.35) {$\circ$};
	
	\draw[->] (4) to [out=45,in=225] (.13,.20);
	\draw (3) to [out=-45,in=135] (-.13,.20);
	\draw[dashed] (3) arc (45:135:.5);
	\draw[dashed] (4) arc (-45:-135:.5);
	\draw (1) to [out=225,in=45] (.13,.20);
	\draw[->] (2) to [out=135,in=-45] (-.13,.20);
	\draw[dashed] (1) arc (135:45:.5);
	\draw[dashed] (2) arc (-135:-45:.5);
\end{tikzpicture}
\]

\end{enumerate}

\begin{rem}
Several remarks are due.  
\begin{enumerate}
\item The relation $\chi_{gp,gq}=\chi_{p,q}$ appears to be missing; it is implicit in the definition of the character function of a curve. 
\item The second and third rule are formal consequences of the other rules.  
\item While the last three rules appear to be virtually identical, there is an important distinction.  The orientation of arcs in the last rule is arbitrary-seeming but necessary, whereas the orientation of loops in the fifth rule is truly arbitrary.
\item Despite how the last three rules have been drawn, it is possible that the curves on the left hand side intersect along the chosen segments. 
\end{enumerate}
\end{rem}

\subsection{Weaknesses of this approach}

While it might be tempting to regard this as the `correct way' to graphically visualize the character algebra, there are two shortcomings of this approach.
\begin{itemize}
\item \emph{Orientation-dependance:} The function $\chi_c$ depends on the orientation of a curve $c$, but only up to a sign.
\item \emph{Non-local relations:} The relations will not be local in $\S$.  For each crossing in a collection of curves, there is a relation, but the signs in that relation will depend on whether the crossing is between two distinct curves or the same curve, which is not local information.
%
\end{itemize}
In the undecorated case, the relations can be made local by sending a curve $c$ to $-\chi_c$.  This turns the last three rules into the \emph{Kauffman skein relation at q=-1} (\cite{Bul97},\cite{PS00}).  Such a fix will not be available in the decorated generality because of the orientation-dependance of the arcs.

A subsequent paper by the authors \cite{MSSkein} will explore two methods for fixing these short-comings:
\begin{itemize}
\item Cleverly choosing a sign-correction $w(c)$ for each curve $c$, so that the map $c\rightarrow (-1)^{w(c)}\chi_c$ has the desired properties.  
\item Twisting the definition of decorated $SL_2(\mathbb{C})$-local systems so that the corresponding character algebra $\widetilde{\Char}(\S,\M)$ can be canonically identified with a graphical algebra with the desired properties.
\end{itemize}
In both cases, the resulting graphical algebra will be the \emph{Kauffman skein algebra} at $q=1$ (where curves are allowed to have endpoints in $\M$).  The first approach is the decorated analog of Barrett's use of spin structures to flip signs in Kauffman skein algebras \cite{Bar99}, while the second corresponds to his observation that the $q=1$ skein algebra corresponded to certain flat $SL_2(\k)$-connections on the frame bundle.

%
%

%

\section{Invariants and concomitants on $End(\V)^m\oplus \V^n$}

The rest of the paper is devoted to the proofs of Theorem \ref{thm: main}.  The main ingredient for the proof of Theorem \ref{thm: main} will be presentations for the algebras of {invariants} and {matrix concomitants} on the space $End(\V)^m\oplus \V^n$ (Theorems \ref{thm: invpres} and \ref{thm: conpres}).  


\subsection{Preliminaries from invariant theory} This section collects the necessary results from invariant theory.  A reference is \cite{Stu08}.

 Let $G$ be a semisimple algebraic group over $\k$, and let $A$ be a $\k$-algebra with an action of $G$.  The subspace $A^G\subset A$ of $G$-invariant elements is a subalgebra, called the \textbf{invariant subalgebra}.  Taking invariants is functorial; an $SL_2(\k)$-equivariant morphism of algebras $f:A\rightarrow B$ restricts to a morphism $f:A^G\rightarrow B^G$.

The main goal of invariant theory is usually to find a presentation of $A^G$. A typical approach is to write $A = B / I$, where $B$ is an algebra with a $G$ action and $I$ is a $G$-stable two-sided ideal. Since $G$ is semisimple, it is a standard fact that 
\[A^G = (B/I)^G = B^G / I^G\]
However, finding generators for $I^G$ can be difficult without the help of the following lemma:
\begin{lemma}\label{lemma: invrel}
If $I$ is generated (as an ideal of $B$) by $G$-invariant elements $\{b_i\}$, then $I^G$ is the (two-sided ideal) in $B^G$ generated by $\{b_i\}$.
\end{lemma}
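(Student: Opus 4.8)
The plan is to build everything on the \emph{Reynolds operator}. Since $\k$ has characteristic $0$, the semisimple group $G$ is linearly reductive, so every rational $G$-module is the direct sum of its trivial isotypic component and a canonical $G$-stable complement. Applied to $B$ this produces the $G$-equivariant projection $R\colon B\to B^G$ onto $B^G$ along the sum of the non-trivial isotypic pieces. I will use two properties of $R$: it restricts to the identity on $B^G$, and it is a homomorphism of $B^G$-bimodules. The latter follows from naturality: for $c\in B^G$, left multiplication $L_c\colon x\mapsto cx$ is a $G$-module map (because $g\cdot(cx)=(g\cdot c)(g\cdot x)=c(g\cdot x)$), so $R(cx)=cR(x)$, and symmetrically $R(xc)=R(x)c$.

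First I would observe that $I$ is a $G$-stable subspace: it is spanned by products $a\,b_i\,a'$ with $a,a'\in B$, and $g\cdot(a\,b_i\,a')=(g\cdot a)\,b_i\,(g\cdot a')\in I$ because each $b_i$ is invariant. Hence $R$ carries $I$ onto its own trivial isotypic component, giving $R(I)=I^G$. Write $J$ for the two-sided ideal generated by $\{b_i\}$ inside $B^G$. The easy inclusion $J\subseteq I^G$ is immediate, since each $b_i\in B^G\cap I$ and $J$ is obtained from the $b_i$ by multiplication with elements of $B^G\subseteq B$, so $J\subseteq I\cap B^G=I^G$. For the reverse inclusion I would take $x\in I^G$, write $x=\sum_k a_k\,b_{i_k}\,a_k'$, and apply $R$; since $x$ is invariant, $x=R(x)=\sum_k R(a_k\,b_{i_k}\,a_k')$, so it suffices to show each $R(a\,b_i\,a')\in J$.

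The bimodule property of $R$ lets me strip invariant factors off the \emph{outside} of such a product, and this is exactly where the difficulty concentrates: the invariant generator $b_i$ sits \emph{between} the non-invariant factors $a$ and $a'$, so pulling it out is not formal. When $B$ is commutative the obstacle disappears, which is the case I would present in full: there one rewrites $a\,b_i\,a'=b_i(a a')$ and uses $R(b_i y)=b_i R(y)$ to get $R(a\,b_i\,a')=b_i\,R(a a')\in b_i B^G\subseteq J$, finishing the argument. The same works whenever the $b_i$ can be moved to one side, e.g. if they are central in $B$, since then $R(a\,b_i\,a')=R(b_i\,a a')=b_i R(a a')\in J$.

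The hard part, and the step I expect to be the genuine obstacle, is the general two-sided case: there $R(a\,b_i\,a')$ need not lie in $J$ at all, because the diagonal $G$-action on $B\otimes B$ can create invariants outside $B^G\otimes B^G$ that feed extra elements into $I^G$. So the plan in the noncommutative setting is not to argue abstractly but to use the specific structure of the $G$-algebra $B$ at hand — checking that its invariant generators admit the reorganization above — so that stripping invariants from the outside is enough to land every $R(a\,b_i\,a')$ in $J$, and hence $I^G=J$.
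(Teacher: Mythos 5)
You are right to be suspicious of the two-sided case, and in fact you have under-sold your own objection. First, for calibration: the paper offers no proof of Lemma~\ref{lemma: invrel} at all --- it is stated as a standard fact alongside the Reynolds operator --- and your commutative argument ($I$ is $G$-stable, $R(I)=I^G$, and $R(a\,b_i\,a')=b_i\,R(aa')$) is exactly the standard proof in that setting, so that part is fine. But the ``hard part'' you flagged is not merely hard: the lemma as stated is \emph{false} for general noncommutative $B$, and by precisely the mechanism you named, namely invariants of $B\otimes B$ under the diagonal action that do not lie in $B^G\otimes B^G$. Concretely, take $B=\con{1}{0}=End(\V)\otimes \mathcal{O}[End(\V)]$ with $G=SL_2(\k)$ and the single invariant generator $b=\X_1$. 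The invariant tensor $\sum_{i,j}e_{ij}\otimes e_{ji}$ (which is not in $B^G\otimes B^G$) produces $\sum_{i,j}e_{ij}\,\X_1\,e_{ji}=tr(\X_1)\cdot Id_\V\in I^{SL_2}$. On the other hand $B^{SL_2}$ is the free $\k[t,d]$-module on $Id_\V$ and $\X_1$, where $t=tr(\X_1)$, $d=\det(\X_1)$, with $\X_1^2=t\X_1-d\,Id_\V$; hence the ideal $J$ generated by $\X_1$ in $B^{SL_2}$ is $\k[t,d]\X_1+d\,\k[t,d]\,Id_\V$, which does not contain $t\cdot Id_\V$ since $d\nmid t$. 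So $I^{SL_2}\supsetneq J$. (A graded counterexample also exists in the tensor algebra $T(\V)$ with $b=e_1\otimes e_2-e_2\otimes e_1$: the invariant $e_1\,b\,e_2-e_2\,b\,e_1$ lies in $I^G$, while the degree-$4$ part of $J$ is just $\k\,b^2$.)

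The upshot is that the lemma needs a hypothesis --- $B$ commutative, or the $b_i$ central, or a condition tailored to the algebras at hand --- and your closing plan (verify the reorganization using the specific structure of $B$ and of the generators) is exactly what a correct treatment requires here. In the paper's actual invocations (Corollary~\ref{coro: matcharpres} and the appendix), $B$ is of matrix type, and the extra invariants are precisely of the form $tr(\mathbf{A}b_i)\mathbf{B}$ with $\mathbf{A},\mathbf{B}\in B^{SL_2}$: compare the inner spanning lemma in the proof of Lemma~\ref{lemma: invkernel}, where the invariants of the double are spanned by $\mathbf{A}\otimes\mathbf{B}$ together with $\sum_{i,j}e_{ij}\mathbf{A}\otimes e_{ji}\mathbf{B}$. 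Since $tr(\mathbf{A}b)\cdot Id_\V=\mathbf{A}b+b^\iota\mathbf{A}^\iota$, these extra elements land in $J$ provided each $b_i^\iota$ does; and for the particular generating sets used there this can be checked by hand --- e.g. $\X_i\X_i^\iota-Id_\V=(\det\X_i-1)Id_\V$ is $\iota$-fixed, and for a relator word one has $\X_r^\iota-Id_\V\equiv-\X_r^\iota(\X_r-Id_\V)$ modulo the determinant relations, with a similar manipulation via the $\oute{i}{j}$-relations for the stabilizer generators. My counterexample $b=\X_1$ fails exactly this $\iota$-closure test ($\X_1^\iota=t\,Id_\V-\X_1\notin J$). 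So: your proof of the commutative case is correct and standard, your diagnosis of the obstruction is correct and sharp, and the honest conclusion --- which you stopped just short of --- is that the statement, as printed with no hypothesis on $B$, is not a theorem; it must either be restricted to commutative $B$ (enough for $\inv{m}{n}$-type applications) or be replaced, in the concomitant applications, by the explicit verification that $tr(\mathbf{A}b_i)\mathbf{B}\in J$ for each listed generator.
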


We also recall the important Reynolds operator:
\begin{lemma}[The Reynolds operator]\label{lemma: Reynolds} Let $G$ be a semisimple algebraic group over $\k$, and let $A$ be a $\k$-algebra with an action of $G$.  There is a $\k$-linear map $\gamma:A\dashrightarrow A^G$ such that
\begin{itemize}
\item $\gamma(a)=a$ if $a\in A^G\subset A$.
\item $\gamma(ab)=a\gamma(b)$ and $\gamma(ba)=\gamma(b)a$ if $a\in A^G\subset A$.
\item $\gamma$ is commutes with $G$-equivariant maps.
\end{itemize}
\end{lemma}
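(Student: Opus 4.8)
The plan is to construct $\gamma$ as the canonical projection onto the trivial isotypic component of $A$, using complete reducibility. Since $G$ is semisimple over $\k=\mathbb{C}$, it is linearly reductive, so every rational $G$-representation decomposes as a direct sum of irreducible subrepresentations. Taking for granted the standing assumption that the $G$-action on $A$ is \emph{rational}---every element lies in a finite-dimensional $G$-stable subspace, as holds automatically for coordinate rings of affine $G$-varieties---I would apply this decomposition to $A$ and gather mutually isomorphic irreducibles into isotypic components. This produces a \emph{canonical} splitting $A = A^G \oplus A'$, where $A^G$ is the isotypic component of the trivial representation (which is precisely the invariant subalgebra) and $A'$ is the sum of all nontrivial isotypic components. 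I then define $\gamma\colon A\to A^G$ to be the projection along $A'$. The first property is immediate: the projection fixes every element of $A^G$.

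For the bimodule identities, I would observe that for a fixed $a\in A^G$ the left-multiplication map $L_a\colon x\mapsto ax$ is $G$-equivariant, since $g(ax)=(ga)(gx)=a(gx)$. Any $G$-equivariant $\k$-linear map carries each isotypic component into the corresponding one, hence preserves the decomposition $A=A^G\oplus A'$; in particular $L_a(A')\subseteq A'$ and $L_a(A^G)\subseteq A^G$. Writing $b=\gamma(b)+b'$ with $b'\in A'$ then gives $ab=a\gamma(b)+ab'$, where $a\gamma(b)\in A^G$ and $ab'\in A'$, so that $\gamma(ab)=a\gamma(b)$. The right-hand identity $\gamma(ba)=\gamma(b)a$ follows verbatim with right multiplication in place of $L_a$. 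For the final property, if $f\colon A\to B$ is $G$-equivariant then $f$ maps the trivial-isotypic and nontrivial-isotypic parts of $A$ into those of $B$, so $f(A^G)\subseteq B^G$ and $f(A')\subseteq B'$; splitting an arbitrary input into its two components yields $f\circ\gamma_A=\gamma_B\circ f$.

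The crux of the argument, and the only place any real content enters, is complete reducibility together with the resulting canonicity of the isotypic decomposition for a possibly infinite-dimensional $A$. The delicate point I expect to dwell on is therefore the passage to infinite dimensions: I would note that rationality expresses $A$ as the directed union of finite-dimensional $G$-stable subspaces, apply Maschke-type complete reducibility on each, and check that the projections defined on these pieces patch consistently. This consistency is guaranteed precisely because the trivial-isotypic summand is intrinsic and functorial, not a choice, so the piecewise projections agree on overlaps and assemble into a single well-defined $\k$-linear $\gamma$. (One could alternatively construct $\gamma$ by Haar averaging over a maximal compact subgroup via the unitary trick, but the complete-reducibility description is what directly yields the bimodule and naturality properties above, which are the only features used in the sequel.)
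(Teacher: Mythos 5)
Your proof is correct. Note that the paper itself offers no proof of this lemma: it is stated as a standard fact from invariant theory (the section's cited reference is \cite{Stu08}), so there is no in-paper argument to compare against. Your construction --- the canonical projection onto the trivial isotypic component of a rational $G$-module, with the bimodule identities following from equivariance of multiplication by an invariant element and naturality following from the fact that equivariant maps preserve isotypic components --- is exactly the standard argument the paper implicitly relies on, and each step goes through as you describe. You also correctly identify the one hypothesis the paper leaves tacit: the $G$-action on $A$ must be rational (every element contained in a finite-dimensional $G$-stable subspace) for the isotypic decomposition to exist, which holds in all the paper's applications since the algebras in question, such as $\inv{m}{n}$ and its quotients, are coordinate rings of affine $G$-varieties. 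Your closing remark is also apt: the averaging construction over a maximal compact subgroup would produce the same operator, but the isotypic description is what makes the three listed properties immediate.
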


\subsection{Three isomorphisms} The following three linear maps define isomorphisms of $SL_2(\k)$-representations which will be used repeatedly.

\subsubsection{Transpose} First, let $\V^\vee$ denote the dual vector space to $\V$. The invariant form $\omega$ defines the \emph{transpose} map
$\perp: \V\stackrel{\sim}{\longrightarrow} \V^\vee$
by \[v^\perp:= \omega(v,-)\]

\subsubsection{Outer Product} Next, define the \emph{outer product} map
$ \Theta:\V\otimes \V\stackrel{\sim}{\longrightarrow} End(\V)$
by \[\Theta(v,w):=vw^\perp\] (note that endomorphisms of the form $vw^\perp$ span $End(\V)$).  It follows that
\begin{equation}
 tr(\Theta(v,w)) = w^\perp(v)=\omega(w,v)=-\omega(v,w)
\end{equation}

\subsubsection{Adjoint} Finally, define the \emph{adjoint} map $\iota:End(\V)\rightarrow End(\V)$ by
\[
 (vw^\perp)^\iota := -wv^\perp
\]
Since endomorphisms of the form $vw^\perp$ span $End(\V)$, this completely determines $\iota$.

\begin{prop}\label{prop: iotaprop}
The map $\iota$ has the following properties.
\begin{enumerate}
\item $\iota$ is an $SL_2(\k)$-equivariant anti-involution of the algebra $End(\V)$.
\item $\iota$ is the adjunction for the bilinear form $\omega$; i.e., $\omega(\mathsf{A}v,v')=\omega(v,\mathsf{A}^\iota v')$.
\item $\mathsf{A}+\mathsf{A}^\iota=tr(\mathsf{A})\cdot Id_\V$.  Therefore, $\mathsf{A}$ is scalar iff $\mathsf{A}$ is $\iota$-fixed.
\item $\mathsf{A}\mathsf{A}^\iota=\mathsf{A}^\iota \mathsf{A}=det(\mathsf{A})\cdot Id_\V$.  Therefore, $\mathsf{A}\in SL_2(\k)$ iff $\mathsf{A}\mathsf{A}^\iota=Id_\V$.
\item For $e_1,e_2$ an $\omega$-canonical basis for $\V$ (ie, $\omega(e_1,e_2)=1$), the action of $\iota$ on the corresponding matrices is
\begin{equation}
\gmat{a & b \\ c& d }^\iota = \gmat{d & -b \\ -c & a }
\end{equation}
\end{enumerate}
\end{prop}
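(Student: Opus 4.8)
The plan is to treat property (3), the identity $\mathsf{A}+\mathsf{A}^\iota = tr(\mathsf{A})\cdot Id_\V$, as the keystone, since (4) and (5) drop out of it at once, and to prove the rest by reducing to the rank-one endomorphisms $vw^\perp = \Theta(v,w)$, which span $End(\V)$. First I would settle well-definedness and equivariance together: since $\Theta:\V\otimes\V\to End(\V)$ is a linear isomorphism and the swap $v\otimes w\mapsto w\otimes v$ is a well-defined linear endomorphism of $\V\otimes\V$, the assignment $\iota = \Theta\circ(-\mathrm{swap})\circ\Theta^{-1}$ is a well-defined linear endomorphism of $End(\V)$ agreeing with the stated formula on the spanning set. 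Both $\Theta$ and the swap are $SL_2(\k)$-equivariant (the former by invariance of $\omega$, the latter because the action on $\V\otimes\V$ is diagonal), so $\iota$ is $SL_2(\k)$-equivariant; and $(vw^\perp)^{\iota\iota} = (-wv^\perp)^\iota = vw^\perp$ gives $\iota\circ\iota=\mathrm{id}$. This handles equivariance and the involution half of (1).

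For property (2), I would verify the adjunction $\omega(\mathsf{A}v,v')=\omega(v,\mathsf{A}^\iota v')$ on a rank-one $\mathsf{A}=vw^\perp$, where both sides reduce to a product of two values of $\omega$ that agree after a single application of skew-symmetry; linearity then extends it to all of $End(\V)$. Because $\omega$ is non-degenerate the adjoint is unique, and the anti-homomorphism law follows formally from $\omega(\mathsf{A}\mathsf{B}v,v')=\omega(v,\mathsf{B}^\iota\mathsf{A}^\iota v')$, giving $(\mathsf{A}\mathsf{B})^\iota=\mathsf{B}^\iota\mathsf{A}^\iota$ and completing (1).

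For property (3) I would again reduce to $\mathsf{A}=vw^\perp$, where $tr(\mathsf{A})=\omega(w,v)$ by the trace formula already recorded; evaluating on an arbitrary $u\in\V$, the claim becomes the three-term Grassmann relation $\omega(w,u)v+\omega(u,v)w+\omega(v,w)u=0$, valid because $\wedge^3\V=0$ in dimension two. Property (4) follows by substituting $\mathsf{A}^\iota=tr(\mathsf{A})Id_\V-\mathsf{A}$ from (3) into the Cayley--Hamilton identity $\mathsf{A}^2-tr(\mathsf{A})\mathsf{A}+\det(\mathsf{A})Id_\V=0$, which yields $\mathsf{A}\mathsf{A}^\iota=\mathsf{A}^\iota\mathsf{A}=\det(\mathsf{A})Id_\V$, the $SL_2(\k)$-membership criterion being the case $\det=1$. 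Property (5) is just the matrix transcription of (3): in an $\omega$-canonical basis, $(a+d)Id_\V-\mathsf{A}$ is the asserted matrix. No step presents a serious obstacle; the only point needing genuine care is the bookkeeping around $\Theta$ (well-definedness and equivariance), after which each property is a one-line reduction to a rank-one element together with one of two standard two-dimensional identities.
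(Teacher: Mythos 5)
Your proof is correct, and every step checks out: the factorization $\iota=\Theta\circ(-\mathrm{swap})\circ\Theta^{-1}$ settles well-definedness and $SL_2(\k)$-equivariance; the rank-one verification of the adjunction plus nondegeneracy of $\omega$ yields the anti-homomorphism law; the reduction of (3) to $\omega(w,u)v+\omega(u,v)w+\omega(v,w)u=0$ is legitimate (the expression is alternating trilinear, hence factors through $\wedge^3\V=0$); and Cayley--Hamilton then gives (4) exactly as you say. For comparison purposes there is little to compare: the paper states this proposition without any proof, treating it as a routine compendium of facts about the classical adjugate, so your write-up is a from-scratch verification rather than an alternative to a printed argument. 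Your methods are nonetheless entirely consonant with the paper's toolkit --- the trace formula $tr(vw^\perp)=\omega(w,v)$ is recorded there, spanning by rank-one elements $vw^\perp$ is how the paper defines $\iota$ in the first place, and Cayley--Hamilton is invoked verbatim in the lemma inside the proof of Proposition 3.5.1 ($g+g^{-1}=tr(g)\cdot Id_\V$), which is precisely your (3)+(4) specialized to $\det=1$. Two small observations: your derivation of (5) from (3) is basis-independent, so it proves slightly more than the stated claim (the $\omega$-canonical hypothesis in (5) is not actually needed once (3) is known); and the ``scalar iff $\iota$-fixed'' clause of (3), which you leave implicit, requires dividing by $2$ --- harmless over $\k=\mathbb{C}$ but worth a word. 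You also tacitly use that $\omega$ is nonzero (hence nondegenerate on the $2$-dimensional $\V$); the paper makes the same tacit assumption when it declares $\perp$ and $\Theta$ to be isomorphisms.
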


\subsection{Mixed invariants and matrix concomitants}

Let
\begin{equation}
 \inv{m}{n}: = \mathcal{O}[End(\V)^m\oplus \V^n] = Sym_\k (End(\V)^m\oplus \V^n)^\vee
\end{equation}
denote the algebra of regular functions on the variety $End(\V)^m\oplus \V^n$.
The $SL_2(\k)$-action on $End(\V)$ and $\V$ extends to an action on $\inv{m}{n}$.

\begin{defn}
The $SL_2(\k)$-invariant subalgebra
$\inv{m}{n}^{SL_2}\subset \inv{m}{n}$ will be called the algebra of \textbf{mixed invariants}.
\end{defn}
The theory of mixed invariants and its presentations simultaneously generalizes the theory of invariant functions on $\V^n$ and the theory of invariant functions on $End(\V)^m$ (hence, `mixed').  As such, there are many partial results; some of the history of this problem will be reviewed in Section \ref{section: known}.


Now, denote by
\begin{equation}
 \con{m}{n}:= End(\V)\otimes \inv{m}{n}
\end{equation}
The multiplication in $End(\V)$ makes this into a non-commutative algebra over $\inv{m}{n}$, where $\inv{m}{n}\hookrightarrow \con{m}{n}$ as scalar matrices.  The group $SL_2(\k)$ acts diagonally on $\con{m}{n}$ by algebra automorphisms.  The algebra $\con{m}{n}$ is equivalent to the algebra of regular functions from $End(\V)^m\oplus \V^n$ to $End(\V)$; from this perspective, the $SL_2(\k)$-action is by 
post-conjugation.

\begin{defn}
The $SL_2(\k)$-invariant subalgebra
$\con{m}{n}^{SL_2}\subset \con{m}{n}$ will be called the algebra of \textbf{mixed matrix concomitants}.
\end{defn}

While an interesting object in its own right, the mixed matrix concomitants are most useful as an intermeditary in computing the mixed invariants and related algebras.  Specifically, some relations won't be $SL_2(\k)$-invariant in $\inv{m}{n}$, but will become invariant when extended to $\con{m}{n}$,\footnote{Or rather, the even part of $\con{m}{n}$; see Section \ref{section: gettinginvariantrelations}.} allowing Lemma \ref{lemma: invrel} to be used.

\begin{rem}(\emph{Notation for $\con{m}{n}$})
Plain math font $\{A,B,...\}$ will be used to denote generic elements in $\con{m}{n}$, sans serif font $\{\mathsf{A},\mathsf{B}...\}$ will be used to denote $End(\V)\subset \con{m}{n}$ (constant elements), and bold $\{\mathbf{A},\mathbf{B},...\}$ will be used to denote $SL_2$-invariant elements of $\con{m}{n}$.  This can be very useful for visually distinguishing between otherwise identical-looking results like Lemma \ref{lemma: kernel}, Corollary \ref{coro: kerspan} and Lemma \ref{lemma: invkernel}.
\end{rem}

\subsection{Maps between $\inv{m}{n}^{SL_2}$ and $\con{m}{n}^{SL_2}$} Results about algebras of mixed invariants and mixed matrix concomitants will be related by two maps, scalar inclusion and trace.

The scalar inclusion $\inv{m}{n}\hookrightarrow \con{m}{n}$ is $SL_2(\k)$-equivariant, and so it induces a scalar inclusion of invariants.
\[ -\cdot Id_\V:\inv{m}{n}^{SL_2}\hookrightarrow \con{m}{n}^{SL_2}\]
In this way, $\con{m}{n}^{SL_2}$ is an algebra over $\inv{m}{n}^{SL_2}$.


The anti-involution $\iota$ on $End(\V)$ extends to an anti-involution of $\con{m}{n}$, and all the analogous properties in Proposition \ref{prop: iotaprop} remain true.  In particular, for $A\in \con{m}{n}$, then  $A\in \inv{m}{n}\cdot Id_\V$ iff $A^\iota =A$, and so,
\begin{prop}
Under the scalar inclusion $\inv{m}{n}\hookrightarrow \con{m}{n}$, the algebra of mixed invariants $\inv{m}{n}^{SL_2}$ is the $\iota$-fixed subalgebra of $\con{m}{n}^{SL_2}$.
\end{prop}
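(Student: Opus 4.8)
The plan is to reduce everything to the two facts recorded just before the statement: that the scalar inclusion $\inv{m}{n}\hookrightarrow\con{m}{n}$ is $SL_2(\k)$-equivariant, and that an element $A\in\con{m}{n}$ lies in $\inv{m}{n}\cdot Id_\V$ if and only if $A^\iota=A$ (the extension of part (3) of Proposition \ref{prop: iotaprop}). Given these, the argument is a short bookkeeping of invariance through the inclusion, and I would organize it as a pair of containments.

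First I would establish the forward containment. Since the scalar inclusion is equivariant, it restricts to a map $\inv{m}{n}^{SL_2}\to\con{m}{n}^{SL_2}$. Moreover, for any $f\in\inv{m}{n}^{SL_2}$ the element $f\cdot Id_\V$ is scalar, hence $\iota$-fixed (apply the extended property (3), or simply note $Id_\V^\iota=Id_\V$ and that $\iota$ acts $\inv{m}{n}$-linearly). Thus the image of $\inv{m}{n}^{SL_2}$ lands inside the $\iota$-fixed subalgebra of $\con{m}{n}^{SL_2}$.

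Next I would prove the reverse containment. Take $A\in\con{m}{n}^{SL_2}$ with $A^\iota=A$. By the extended property (3), $A=f\cdot Id_\V$ for a unique $f\in\inv{m}{n}$; it remains to see $f$ is $SL_2(\k)$-invariant. Since the $SL_2(\k)$-action on $\con{m}{n}$ is diagonal and fixes $Id_\V$ (conjugation preserves the identity), for $g\in SL_2(\k)$ we have $g\cdot(f\cdot Id_\V)=(g\cdot f)\cdot Id_\V$. Invariance of $A$ gives $(g\cdot f)\cdot Id_\V=f\cdot Id_\V$, and injectivity of the scalar inclusion forces $g\cdot f=f$. Hence $f\in\inv{m}{n}^{SL_2}$ and $A$ lies in the image of $\inv{m}{n}^{SL_2}$.

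Combining the two containments identifies $\inv{m}{n}^{SL_2}$ with the $\iota$-fixed subalgebra of $\con{m}{n}^{SL_2}$. The only genuine content beyond formal manipulation is the assertion that part (3) of Proposition \ref{prop: iotaprop} extends to $\con{m}{n}$; since $\iota$ acts only on the $End(\V)$ tensor factor and the identity $\mathsf{A}+\mathsf{A}^\iota=tr(\mathsf{A})\cdot Id_\V$ is $\inv{m}{n}$-linear, this extension is immediate. I therefore expect no substantive obstacle here: the proposition is essentially a restatement, in invariant-theoretic language, of the scalar-versus-$\iota$-fixed dichotomy already available pointwise on $End(\V)$, with the equivariance of the inclusion supplying the compatibility with the $SL_2(\k)$-action.
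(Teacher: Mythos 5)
Your argument is correct and is essentially the paper's own: the paper offers no separate proof, stating the proposition as an immediate consequence (``and so'') of the remark that $A\in \inv{m}{n}\cdot Id_\V$ iff $A^\iota=A$ in $\con{m}{n}$, combined with the $SL_2(\k)$-equivariance of the scalar inclusion. Your two containments simply make that implicit argument explicit, including the (genuinely routine) verification that property (3) of Proposition \ref{prop: iotaprop} extends $\inv{m}{n}$-linearly to $\con{m}{n}$ and that invariance of $f\cdot Id_\V$ forces invariance of $f$.
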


The trace gives a linear map\footnote{Dashed arrows will be used to denote morphisms in a weaker category than their source and target; typically, linear maps between algebras which are not algebra maps.} $End(\V)\dashrightarrow \k$, which induces an $SL_2(\k)$-equivariant, $\inv{m}{n}$-module map
$tr:\con{m}{n}\dashrightarrow \inv{m}{n}$.  This then restricts to a $\inv{m}{n}^{SL_2}$-module map
\[ tr:\con{m}{n}^{SL_2}\dashrightarrow \inv{m}{n}^{SL_2}\]
%
Since $tr(f\cdot Id_\V)=2f$, the map $2^{-1}tr$ is a right inverse to scalar inclusion.  It follows that $tr$ surjects onto $\inv{m}{n}^{SL_2}$.  By Proposition \ref{prop: iotaprop},
\begin{equation}
A+A^\iota =tr(A)\cdot Id_\V
\end{equation}


\subsection{Elementary concomitants and invariants}

For $1\leq i\leq m$, let $\X_i\in \con{m}{n}$ denote the \emph{$i$th coordinate function},
\begin{equation}
\X_i(\mathsf{A}_1,\mathsf{A}_2,...,\mathsf{A}_m,v_1,v_2,...,v_m) := \mathsf{A}_i
\end{equation}
Since the action of $SL_2(\k)$ is by conjugation, $\X_i\in \con{m}{n}^{SL_2}$.

For $1\leq i,j\leq n$, let $\oute{i}{j}\in \con{m}{n}$ denote \emph{$(i,j)$th outer product function},
\begin{equation}
\oute{i}{j}(\mathsf{A}_1,\mathsf{A}_2,...,\mathsf{A}_m,v_1,v_2,...,v_m) := \Theta(v_i,v_j) = v_iv_j^\perp
\end{equation}
Since $\Theta$ is $SL_2(\k)$-equivariant, $\oute{i}{j}\in \con{m}{n}^{SL_2}$.

Because $\iota$ is $SL_2(\k)$-equivariant, if $\mathbf{A}\in \con{m}{n}^{SL_2}$, then $\mathbf{A}^\iota\in \con{m}{n}^{SL_2}$.  In particular, $\X_i^\iota$ and $\oute{i}{j}^\iota$ are also matrix concomitants, although $\oute{i}{j}^\iota=-\oute{j}{i}$, so only $\X_i^\iota$ provides a new example of a matrix concomitant.

Any of these concomitants, or more generally any word in these concomitants, can be made into an invariant by taking the trace.  

\subsection{Known results on $\inv{m}{n}^{SL_2}$ and $\con{m}{n}^{SL_2}$.}\label{section: known}

As a generalization of two well-known problems, there are many partial results on the structure of the algebras of mixed invariants and matrix concomitants.

The case $m=0$ is classical; see \cite{Wey39} or \cite{How89}.
\begin{thm}[Invariants on $\V^n$]
The algebra $\inv{0}{n}^{SL_2}$ is generated by $\{tr(\oute{i}{j})\}$, for $1\leq i,j\leq n$, with relations generated by $tr(\oute{i}{j})=-tr(\oute{j}{i})$ and
\[tr(\oute{i}{j})tr(\oute{i'}{j'}) = tr(\oute{i}{j'})tr(\oute{i'}{j})+tr(\oute{i}{i'})tr(\oute{j}{j'})\]
\end{thm}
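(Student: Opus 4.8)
The plan is to recognize this as the classical First and Second Fundamental Theorems for the standard representation of $SL_2(\k)=Sp_2(\k)$, translated through the identity $tr(\oute{i}{j})=-\omega(v_i,v_j)$ established earlier. Writing each $v_i=(x_i,y_i)$ in an $\omega$-canonical basis, this says the proposed generators are exactly the $2\times 2$ minors $x_iy_j-x_jy_i$ of the generic $2\times n$ matrix with columns $v_1,\dots,v_n$. So the theorem asserts that $\inv{0}{n}^{SL_2}$ is the subalgebra of $\inv{0}{n}$ generated by these minors, presented by antisymmetry and the Pl\"ucker relations.

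First I would establish generation (the FFT). The cleanest route is the Cauchy decomposition of $\inv{0}{n}=Sym(\V^\vee\otimes\k^n)$ as a $GL_2\times GL_n$-module, $\inv{0}{n}\cong\bigoplus_\lambda S_\lambda(\V^\vee)\otimes S_\lambda(\k^n)$, where $\lambda$ runs over partitions with at most two rows. As an $SL_2(\k)$-module $S_{(\lambda_1,\lambda_2)}(\V^\vee)\cong Sym^{\lambda_1-\lambda_2}(\V)$, which is trivial precisely when $\lambda_1=\lambda_2$; hence $\inv{0}{n}^{SL_2}\cong\bigoplus_{k\geq 0}S_{(k,k)}(\k^n)$, the degree-$2k$ part being $S_{(k,k)}(\k^n)$. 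The minors $\omega(v_i,v_j)$ span the $k=1$ piece $S_{(1,1)}(\k^n)=\wedge^2\k^n$, and since $S_{(k,k)}(\k^n)$ is the Cartan component of $Sym^k(\wedge^2\k^n)$, the multiplication map $Sym^k(\wedge^2\k^n)\to S_{(k,k)}(\k^n)$ is surjective; hence the minors generate. (Alternatively this is Weyl's FFT for $Sp_2$, cited as \cite{Wey39},\cite{How89}.)

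Next I would handle the relations (the SFT). Let $P$ be the abstract algebra on antisymmetric symbols $t_{ij}$ modulo the Pl\"ucker relations; this is the homogeneous coordinate ring of $Gr(2,n)$ in its Pl\"ucker embedding, and the map $\phi:t_{ij}\mapsto -\omega(v_i,v_j)$ lands in $\inv{0}{n}^{SL_2}$ and is surjective by the FFT. To show $\phi$ is an isomorphism I would invoke the straightening law: the Pl\"ucker relations rewrite any monomial in the $t_{ij}$ as a $\k$-linear combination of \emph{standard monomials} $t_{i_1j_1}\cdots t_{i_kj_k}$ whose columns form a standard Young tableau of shape $(k,k)$, so these span $P$ in each degree. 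It then suffices to check that their images are linearly independent in $\inv{0}{n}^{SL_2}$, which follows from the degree-$k$ dimension count, since the number of such standard monomials equals the number of standard tableaux of shape $(k,k)$, which is $\dim S_{(k,k)}(\k^n)$ computed above. Matching dimensions in every degree forces $\phi$ to be an isomorphism.

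The main obstacle is this second half: proving that the Pl\"ucker quadrics generate the \emph{entire} relation ideal rather than just its degree-two part, equivalently that the standard monomials are linearly independent. The dimension count reduces this to the combinatorial straightening algorithm together with the identity $\dim S_{(k,k)}(\k^n)=\#\{\text{SYT of shape }(k,k)\}$; one must verify that straightening terminates and outputs only standard monomials. This is exactly the content of the classical SFT for $SL_2$, so the fallback is to cite \cite{Wey39} or \cite{How89} directly and merely record the translation via $tr(\oute{i}{j})=-\omega(v_i,v_j)$.
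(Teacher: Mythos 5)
You are correct, but your route is genuinely different from the paper's: the paper offers no proof of this statement at all, presenting it as classical with a pointer to \cite{Wey39} and \cite{How89}, so your closing fallback---cite the references and record the translation $tr(\oute{i}{j})=-\omega(v_i,v_j)$---is precisely the paper's entire treatment. What you have written out is a self-contained proof of the symplectic FFT and SFT for $Sp_2=SL_2$: Cauchy decomposition for generation, straightening plus a dimension count for the relations. Note that the paper's own machinery furnishes yet a third derivation: specializing Theorem \ref{thm: invpres} to $m=0$, the presentation is obtained by pushing Procesi's trace-identity presentation of $\inv{n^2}{0}^{SL_2}$ through the surjection $\nu:\inv{n^2}{0}^{SL_2}\rightarrow \inv{0}{n}^{SL_2}$ and computing $\ker\nu$ (Lemmas \ref{lemma: kernel} and \ref{lemma: invkernel}); there the relation $tr(\mathbf{A}\oute{i}{j}\mathbf{B}\oute{i'}{j'})=tr(\mathbf{A}\oute{i}{j'})tr(\mathbf{B}\oute{i'}{j})$ collapses every trace word into a product of the $tr(\oute{i}{j})$, recovering exactly the Pl\"ucker quadric you state. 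Your approach is more elementary and makes the combinatorics explicit; the paper's approach is uniform in $m$ and sidesteps standard monomial theory entirely. Two small repairs to your write-up: surjectivity of the multiplication map $Sym^k(\wedge^2\k^n)\rightarrow S_{(k,k)}(\k^n)$ does not follow from the phrase ``Cartan component'' alone---you should note that the image is a nonzero $GL_n$-submodule (it contains $\omega(v_1,v_2)^k\neq 0$) of an irreducible module, hence everything; and your degree-$k$ count must use \emph{semistandard} tableaux of shape $(k,k)$ with entries in $\{1,\dots,n\}$ (columns strict, rows weakly increasing), since standard tableaux in the strict sense do not compute $\dim S_{(k,k)}(\k^n)$.
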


The problem of finding invariants on $End(\V)^m$ was first proposed by Artin in \cite{Art69}.  A complete presentation of invariants and matrix concomitants on $End(\V)^m$ was found by Procesi
\footnote{Procesi worked in dimension $n$, but we restrict to $n=2$.}
 in \cite{Pro76}.  The invariants are generated by traces of strings of coordinate functions, and the relations are generated by a single class of relation.
\begin{thm}\cite[Theorems 1.3 and 4.5.a]{Pro76}\label{thm: invpro}
The algebra $\inv{m}{0}^{SL_2}$ is generated as a commutative algebra by $tr(\mathbf{A})$, as $\mathbf{A}$ runs over all words in the coordinate functions $\{\X_i\}$.  The relations are generated by \emph{Procesi's F-relation}:
\begin{align}\label{eq: F}
 tr(\mathbf{A}\mathbf{B}\mathbf{C})+tr(\mathbf{CBA})+tr(\mathbf{A})tr(\mathbf{B})tr(\mathbf{C}) &\\ =tr(\mathbf{B})tr(\mathbf{AC})&+tr(\mathbf{AB})tr(\mathbf{C})+ tr(\mathbf{A})tr(\mathbf{BC})
\end{align}
as $\mathbf{A}$, $\mathbf{B}$ and $\mathbf{C}$ run over all words in $\{\X_i\}$.
\end{thm}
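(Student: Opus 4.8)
\subsection*{Proof proposal} The plan is to prove the two halves of the statement separately: the \emph{first fundamental theorem} (that traces of words generate) and the \emph{second fundamental theorem} (that the F-relation generates all relations). Throughout I may replace $SL_2(\k)$ by $GL(\V)$, since scalars act trivially by conjugation and hence the two groups share the same invariant subalgebra of $\inv{m}{0}$. Because $\k$ has characteristic zero, every invariant is recovered by restitution from its full polarization, so it suffices to understand the \emph{multilinear} invariants: those depending linearly on each of $d$ distinct matrix arguments $\mathsf{A}_1,\dots,\mathsf{A}_d$.

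For the generation statement I would use Schur--Weyl duality. Writing $End(\V)=\V\otimes\V^\vee$, a multilinear invariant function on $End(\V)^{\otimes d}$ is the same datum as a $GL(\V)$-invariant linear functional on $(\V\otimes\V^\vee)^{\otimes d}\cong\V^{\otimes d}\otimes(\V^{\vee})^{\otimes d}\cong End(\V^{\otimes d})$, i.e.\ (via the trace pairing) an invariant element of $End(\V^{\otimes d})=End_{GL(\V)}(\V^{\otimes d})$. By Schur--Weyl this space is spanned by the permutation operators $\{\sigma:\sigma\in S_d\}$. Feeding the matrix arguments into such an operator and contracting with the dual factors turns each $\sigma$ into the product of traces $\prod_c tr(\mathsf{A}_{i_1}\cdots\mathsf{A}_{i_k})$ over the cycles $c=(i_1\cdots i_k)$ of $\sigma$. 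Thus every multilinear invariant is a linear combination of trace monomials, and restitution upgrades this to the claim that the $tr(\mathbf{A})$, over words $\mathbf{A}$ in the $\X_i$, generate $\inv{m}{0}^{SL_2}$.

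For the relations the same dictionary turns the problem into computing the kernel of the Schur--Weyl map $\Phi:\k[S_d]\to End(\V^{\otimes d})$: under the trace pairing, a kernel element is exactly a linear relation among the corresponding trace monomials. The relevant generator is the full antisymmetrizer on $n+1=3$ letters, $a_3=\sum_{\sigma\in S_3}\mathrm{sgn}(\sigma)\,\sigma$, which maps to $0$ because one cannot antisymmetrize three vectors in the $2$-dimensional space $\V$; conversely $\ker\Phi$ is generated as a two-sided ideal of $\k[S_d]$ by $a_3$. Translating $a_3$ through the trace pairing---multiplying by a fourth slot, taking the trace, and using cyclicity so that $tr(\mathsf{B}\mathsf{A}\mathsf{C})=tr(\mathsf{C}\mathsf{B}\mathsf{A})$---reproduces the displayed F-relation; equivalently $a_3$ is the multilinearization of the Cayley--Hamilton identity $\mathsf{A}\mathsf{B}+\mathsf{B}\mathsf{A}-tr(\mathsf{A})\mathsf{B}-tr(\mathsf{B})\mathsf{A}+\bigl(tr(\mathsf{A})tr(\mathsf{B})-tr(\mathsf{A}\mathsf{B})\bigr)Id=0$. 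A final restitution argument shows that every relation among traces of words, not merely the multilinear ones, lies in the ideal generated by the F-relation.

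The main obstacle is the second fundamental theorem, and within it the single structural input that $\ker\Phi$ is generated as a two-sided ideal by the $3$-letter antisymmetrizer. This is the $\dim\V=2$ case of the kernel description in Schur--Weyl duality: the partitions indexing $\ker\Phi$ are precisely those with more than two rows, and one must verify these span exactly the ideal $(a_3)$. The remaining friction is bookkeeping---carefully tracking polarization and restitution so that the multilinear relations account for all relations in characteristic zero, and checking that the cyclic rearrangement together with the extra trace slot convert $a_3$ into the precise symmetric form of the F-relation rather than some equivalent variant.
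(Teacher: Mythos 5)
You should know that the paper itself offers no proof of this statement: it is imported directly from Procesi \cite{Pro76} (Theorems 1.3 and 4.5.a), so the only fair comparison is with Procesi's original argument --- and your sketch is essentially a faithful reconstruction of it. The first fundamental theorem via Schur--Weyl duality (multilinear conjugation-invariants on $End(\V)^{\otimes d}$ correspond to $\k[S_d]$, each permutation contributing the product of traces over its cycles, with polarization/restitution reducing everything to the multilinear case in characteristic zero), and the second via the fact that $\ker\bigl(\k[S_d]\rightarrow End(\V^{\otimes d})\bigr)$ is the two-sided ideal generated by the three-letter antisymmetrizer $a_3$, are exactly the ingredients of the cited proof. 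Your identification of $a_3$ with the F-relation is correct: evaluating $\sum_{\sigma\in S_3}\mathrm{sgn}(\sigma)\,\sigma$ on three slots gives $tr(\mathbf{A})tr(\mathbf{B})tr(\mathbf{C})-tr(\mathbf{AB})tr(\mathbf{C})-tr(\mathbf{AC})tr(\mathbf{B})-tr(\mathbf{BC})tr(\mathbf{A})+tr(\mathbf{ABC})+tr(\mathbf{CBA})$, which is F on the nose --- note that no fourth slot is needed, since the extra-slot-and-trace manoeuvre is what converts the two-variable Cayley--Hamilton identity (the G-relation of Theorem \ref{thm: conpro}) into F, whereas $a_3$ read through the invariant dictionary yields F directly; your ``fourth slot'' phrasing is off by one but harmless. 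One caveat worth recording: your opening reduction from $SL_2(\k)$ to $GL(\V)$ is valid here precisely because $n=0$, as conjugation kills scalars and $GL_2(\k)=\k^{\times}\cdot SL_2(\k)$ over $\k=\mathbb{C}$; it would fail for the mixed algebras $\inv{m}{n}^{SL_2}$ with $n>0$, where the invariants built from $\omega$ (e.g. $tr(\oute{i}{j})=-\omega(v_i,v_j)$) are not $GL_2$-stable --- which is exactly why the paper works with $SL_2$ throughout its genuinely new material.
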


The algebra of matrix concomitants $\con{m}{0}^{SL_2}$ can then be generated, over $\inv{m}{0}^{SL_2}$, by the coordinate functions, and the relations are again generated by a single class of relation.
\begin{thm}\cite[Theorems 2.1 and 4.5.b]{Pro76}\label{thm: conpro}
The algebra $\con{m}{0}^{SL_2}$ is generated, as an algebra over $\inv{m}{n}^{SL_2}$, by the coordinate functions $\{\X_i\}$.  The relations are generated by \emph{Procesi's G-relation}:
\begin{equation}
\mathbf{A}\mathbf{B}+\mathbf{B}\mathbf{A}-tr(\mathbf{A})\mathbf{B}-tr(\mathbf{B})\mathbf{A} -tr(\mathbf{AB})+tr(\mathbf{A})tr(\mathbf{B})=0
\end{equation}
as $\mathbf{A}$ and $\mathbf{B}$ run over all words in $\{\X_i\}$.
\end{thm}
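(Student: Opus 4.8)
The plan is to prove the two halves of Theorem~\ref{thm: conpro} in turn: first that the coordinate functions $\{\X_i\}$ generate $\con{m}{0}^{SL_2}$ as an algebra over $\inv{m}{0}^{SL_2}$ (a first-fundamental-theorem statement), and then that every relation among words in the $\{\X_i\}$ is a consequence of the G-relation (a second-fundamental-theorem statement). Before either step I would record that the G-relation actually holds, which is quick: since $SL_2(\k)$ acts on $End(\V)$ by conjugation, every word $\mathbf{A}$ in the $\X_i$ is a concomitant, and the G-relation is precisely the polarization of the Cayley--Hamilton identity for $2\times2$ matrices. Indeed, Proposition~\ref{prop: iotaprop}(3),(4) give $\mathsf{A}^\iota=tr(\mathsf{A})Id_\V-\mathsf{A}$ and $\mathsf{A}\mathsf{A}^\iota=det(\mathsf{A})Id_\V$, whence $\mathsf{A}^2-tr(\mathsf{A})\mathsf{A}+det(\mathsf{A})Id_\V=0$; replacing $\mathsf{A}$ by $\mathbf{A}+\mathbf{B}$ and extracting the bilinear part (using that the bilinear part of $det$ is $tr(\mathbf{A})tr(\mathbf{B})-tr(\mathbf{A}\mathbf{B})$) yields exactly Procesi's G-relation. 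This shows the ideal of genuine relations contains the G-relation, so there is a surjection from the abstractly presented algebra onto $\con{m}{0}^{SL_2}$, and the real content is its injectivity.

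For generation I would pass from matrices to vectors. Using $\omega$ to identify $\V^\vee\cong\V$, and hence $End(\V)\cong\V\otimes\V$, a concomitant $End(\V)^m\to End(\V)$ becomes an $SL_2(\k)$-equivariant polynomial map valued in $\V\otimes\V$, i.e. after full polarization an invariant tensor built from copies of $\V$ and $\V^\vee$. The classical first fundamental theorem for $SL_2(\k)$ acting on $\V^n$ (the $m=0$ presentation recalled at the start of this subsection) says such invariants are spanned by $\omega$-contractions. Translating a contraction pattern back through the dictionary $End(\V)\cong\V\otimes\V$ then decomposes it into closed cycles --- each the trace of a word in the $\X_i$, hence an element of $\inv{m}{0}^{SL_2}$ --- together with a single open path representing the $End(\V)$-output, which is itself a word in the $\X_i$. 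This gives generation of $\con{m}{0}^{SL_2}$ over $\inv{m}{0}^{SL_2}$ by words in the $\{\X_i\}$.

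The relations are the hard part, and where I expect the real obstacle. The clean route is again through the vector dictionary: the \emph{second} fundamental theorem for $SL_2(\k)$-invariants of vectors says that all relations among the brackets $\omega(v_i,v_j)$ are generated by the quadratic Grassmann--Pl\"ucker relation (the relation appearing in the $m=0$ presentation). I would pull these quadratic relations back through $End(\V)\cong\V\otimes\V$ and verify that, modulo the normal form furnished by the generation step, each Pl\"ucker relation becomes an instance of the G-relation (and, on taking traces, the F-relation of Theorem~\ref{thm: invpro}). The delicate point is the bookkeeping: arranging the dictionary so that a Pl\"ucker relation lands \emph{on the nose} as a $\con{m}{0}^{SL_2}$-consequence of the G-relation, rather than merely as some further identity one still has to derive; controlling the leftover open word and the placement of the two contractions being recombined is exactly the combinatorial heart of the matter.

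An alternative, and perhaps cleaner, formulation of this last step is the Razmyslov--Procesi description of the $T$-ideal of trace identities of $2\times2$ matrices, which is generated by the single fundamental trace identity; for $n=2$ that identity is equivalent to the polarized Cayley--Hamilton relation, i.e. the G-relation together with its trace. Establishing that the G-relation generates this entire $T$-ideal --- equivalently, that any word-combination which vanishes as a concomitant lies in the two-sided ideal generated by the G-relation --- is the genuine difficulty and is the substance of Procesi's theorem; I would either invoke it directly or reduce to it via the vector-invariant second fundamental theorem above. Injectivity of the presentation map follows, completing the proof.
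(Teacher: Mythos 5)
This theorem is not proved in the paper at all: it is imported verbatim from Procesi \cite{Pro76} (Theorems 2.1 and 4.5.b), just as the neighboring Theorem \ref{thm: conBH} is imported from Brumfiel--Hilden. So your closing option of ``invoke it directly'' coincides exactly with what the paper does, and there is no in-paper argument for your sketch to diverge from. What you add on top is correct as far as it goes: the verification that the G-relation actually holds in $\con{m}{0}^{SL_2}$ (polarized Cayley--Hamilton, equivalently Proposition \ref{prop: iotaprop}(3)--(4)) is right, and the generation argument via $\omega$-self-duality of $\V$, the identification $End(\V)\cong\V\otimes\V$, full polarization, and the first fundamental theorem for vector invariants is the standard route and is in substance how Procesi proves his Theorem 2.1. (You also silently corrected the paper's typo: the base ring should be $\inv{m}{0}^{SL_2}$, not $\inv{m}{n}^{SL_2}$.)

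Read as a self-contained proof, however, the second-fundamental-theorem step is a plan rather than an argument, as you yourself concede, and that step \emph{is} the theorem. Two concrete points about executing it. First, relations among concomitants do not reduce to relations among invariants for free: the clean bridge is the dummy-variable trick, identifying $\con{m}{0}$ with the subspace of $\inv{m+1}{0}$ linear in an extra matrix variable $\mathbf{Y}$ via $\mathbf{A}\mapsto tr(\mathbf{A}\mathbf{Y})$; under this pairing the G-relation corresponds precisely to the F-relation of Theorem \ref{thm: invpro} (compute $tr$ of the G-relation against $\mathbf{C}=\mathbf{Y}$ and you recover F), so the concomitant SFT reduces to the invariant SFT. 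The paper itself uses exactly this device in the proofs of Lemmas \ref{lemma: invkernel} and \ref{lemma: conkernel}, and your sketch would need it to control the ``leftover open word'' you mention. Second, the terminal fact --- that the three-letter antisymmetrizer, equivalently the fundamental trace identity, generates the entire ideal of identities of $2\times 2$ matrices --- is Razmyslov--Procesi itself; invoking it to prove the statement labeled as Procesi's theorem is legitimate as scholarship (and matches the paper's citation) but circular as a proof. So: relative to the paper, your proposal is consistent with its treatment; as a blind proof, the translation of the vector SFT into the statement that the G-relation generates all concomitant relations is the genuine missing step.
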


%
%
%

Inspired by later results of Procesi, Brumfiel and Hilden produced a different presentation of $\con{m}{n}^{SL_2}$ (as a $\k$-algebra) which is specific to $SL_2(\k)$.
\begin{thm}\cite[Proposition 9.1.i]{BH95}\label{thm: conBH}
The algebra $\con{m}{0}^{SL_2}$ is generated by $\X_i$ and $\X_i^\iota$, with the relations generated by
\[ (\mathbf{A}+\mathbf{A}^\iota)\mathbf{B}=\mathbf{B}(\mathbf{A}+\mathbf{A}^\iota)\]
as $\mathbf{A}$ and $\mathbf{B}$ run over all words in $\{\X_i,\X_i^\iota\}$.
\end{thm}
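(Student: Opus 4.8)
The plan is to \emph{deduce} this presentation from Procesi's presentations of $\con{m}{0}^{SL_2}$ and $\inv{m}{0}^{SL_2}$ (Theorems~\ref{thm: conpro} and~\ref{thm: invpro}), treated as known, rather than to redo the invariant theory from scratch. Let $\mathcal{B}$ be the abstract $\k$-algebra on generators $X_i,Y_i$ ($1\leq i\leq m$) modulo the stated relations, where $\iota$ is the anti-involution determined by $X_i^\iota=Y_i$ and $Y_i^\iota=X_i$ (so that for a word $\mathbf{A}$, $\mathbf{A}^\iota$ is the reversed word with $X$'s and $Y$'s interchanged, and the relations are well-posed). There is an evident algebra map $\phi\colon\mathcal{B}\to\con{m}{0}^{SL_2}$ sending $X_i\mapsto\X_i$ and $Y_i\mapsto\X_i^\iota$; it respects the relations because $\mathbf{A}+\mathbf{A}^\iota=tr(\mathbf{A})\cdot Id_\V$ is a scalar matrix (Proposition~\ref{prop: iotaprop}(3)), hence central, so $(\mathbf{A}+\mathbf{A}^\iota)\mathbf{B}=\mathbf{B}(\mathbf{A}+\mathbf{A}^\iota)$ holds in the target. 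The theorem is exactly the assertion that $\phi$ is an isomorphism.

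Surjectivity of $\phi$ is quick. By Theorem~\ref{thm: conpro}, $\con{m}{0}^{SL_2}$ is generated over its scalar subalgebra $\inv{m}{0}^{SL_2}\cdot Id_\V$ by the coordinate functions $\X_i$, and by Theorem~\ref{thm: invpro} that subalgebra is generated by the traces $tr(\mathbf{A})$ of words in the $\X_i$. Since $tr(\mathbf{A})\cdot Id_\V=\mathbf{A}+\mathbf{A}^\iota$ and $\mathbf{A}^\iota$ is again a word in $\{\X_i,\X_i^\iota\}$, every such scalar, together with every $\X_i$, lies in the image of $\phi$; hence $\phi$ is onto.

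The substance lies in producing a two-sided inverse $\psi\colon\con{m}{0}^{SL_2}\to\mathcal{B}$, equivalently in showing that the single commutation relation of $\mathcal{B}$ already forces all of Procesi's relations; this is the step I expect to be hardest. Using Procesi's presentation of the target I would set $\psi(\X_i)=X_i$ and $\psi(tr(\mathbf{A}))=\mathbf{A}+\mathbf{A}^\iota=:s_\mathbf{A}$, and then check that the $F$- and $G$-relations hold in $\mathcal{B}$ under this substitution; the defining relation of $\mathcal{B}$ says precisely that each $s_\mathbf{A}$ is central, which is exactly the input needed. The $G$-relation $\mathbf{A}\mathbf{B}+\mathbf{B}\mathbf{A}-tr(\mathbf{A})\mathbf{B}-tr(\mathbf{B})\mathbf{A}-tr(\mathbf{A}\mathbf{B})+tr(\mathbf{A})tr(\mathbf{B})=0$, after substitution and expansion, collapses to $s_\mathbf{A}\mathbf{B}^\iota-\mathbf{B}^\iota s_\mathbf{A}$, which vanishes by centrality. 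For the $F$-relation I would first observe that $T(w):=w+w^\iota$ is a $\k$-linear map landing in the central subalgebra (each $T(\mathbf{A})=s_\mathbf{A}$ being central) which is moreover cyclic: $T(xy)-T(yx)$ reduces, on words, to $(\mathbf{A}\mathbf{B}-\mathbf{B}\mathbf{A})+(\mathbf{B}^\iota\mathbf{A}^\iota-\mathbf{A}^\iota\mathbf{B}^\iota)$, and the two commutators cancel once the central $s$'s are passed through, recovering $tr(\mathbf{A}\mathbf{C})=tr(\mathbf{C}\mathbf{A})$. Given cyclicity and the $G$-relation, the $F$-relation is then obtained exactly as in Procesi: multiply the (matrix) $G$-relation on the right by a third word $\mathbf{C}$ and apply $T$.

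Finally $\psi$ and $\phi$ are mutually inverse on generators: $\psi\phi(X_i)=X_i$ and $\psi\phi(Y_i)=\psi(\X_i^\iota)=\psi(tr(\X_i))-\psi(\X_i)=s_{\X_i}-X_i=Y_i$, while $\phi\psi(\X_i)=\X_i$; as these generate, $\phi$ and $\psi$ are mutually inverse and $\phi$ is an isomorphism. The genuinely delicate point is the well-definedness of $\psi$ in the previous paragraph — that $\mathcal{B}$'s lone commutation relation implies all of Procesi's $F$- and $G$-relations — and, within that, organizing the $F$-relation verification so that it reduces cleanly to the $G$-relation together with the cyclicity of $T$.
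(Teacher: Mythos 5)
Your argument is correct, but there is no internal proof to compare it against: the paper does not prove Theorem \ref{thm: conBH}, it imports it from Brumfiel and Hilden \cite{BH95}, remarking only that their presentation was inspired by later results of Procesi. Your proposal in effect reconstructs such a derivation from the two Procesi results the paper does quote (Theorems \ref{thm: invpro} and \ref{thm: conpro}), which would make the statement self-contained modulo those citations. I checked the computations on which well-definedness of $\psi$ rests, and they all go through: splicing Theorem \ref{thm: invpro} into Theorem \ref{thm: conpro} yields a $\k$-algebra presentation of $\con{m}{0}^{SL_2}$ with generators $\X_i$ and central trace symbols $t_{\mathbf{A}}$ subject to centrality, the $F$-relations, and the $G$-relations --- this merging step is standard but should be stated explicitly, since Theorem \ref{thm: conpro} is a presentation over the base ring $\inv{m}{0}^{SL_2}$, not over $\k$; substituting $t_{\mathbf{A}}\mapsto s_{\mathbf{A}}:=\mathbf{A}+\mathbf{A}^\iota$ into the $G$-relation does collapse, after cancellation, to $s_{\mathbf{A}}\mathbf{B}^\iota-\mathbf{B}^\iota s_{\mathbf{A}}$, which vanishes in $\mathcal{B}$ because $\mathbf{B}^\iota$ is again a word; cyclicity of $T(w)=w+w^\iota$ follows cleanly by writing $\mathbf{A}^\iota=s_{\mathbf{A}}-\mathbf{A}$ and $\mathbf{B}^\iota=s_{\mathbf{B}}-\mathbf{B}$ and passing the central $s$'s through; and right-multiplying the resulting $G$-identity by $\mathbf{C}$ and applying $T$, using $T(s_{\mathbf{A}}w)=s_{\mathbf{A}}T(w)$ (valid since $s_{\mathbf{A}}$ is both central and $\iota$-fixed) together with $T(\mathbf{B}\mathbf{A}\mathbf{C})=T(\mathbf{C}\mathbf{B}\mathbf{A})$, recovers exactly the image of the $F$-relation. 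Two points you treat implicitly deserve a sentence each: first, $\iota$ descends to $\mathcal{B}$ because the defining relation set is $\iota$-stable ($s_{\mathbf{A}}$ is $\iota$-fixed and $\mathbf{B}^\iota$ is again a word), which is what makes $T$ well defined on $\mathcal{B}$; second, the empty word is consistent, since $s$ of the empty word is $1+1=2$, matching $tr(Id_\V)=2$, which is also what lets cyclic invariance of the trace symbols follow from the $F$-relations in Theorem \ref{thm: invpro}. With these caveats, the verifications $\psi\phi=\mathrm{id}$ (via $\X_i^\iota=tr(\X_i)\cdot Id_\V-\X_i$) and $\phi\psi=\mathrm{id}$ on generators complete the proof as you describe, and what your route buys is precisely a proof of the one black-box ingredient in the paper's chain Theorem \ref{thm: conBH} $\Rightarrow$ Theorem \ref{thm: conpres} $\Rightarrow$ Theorem \ref{thm: matcharpres} that is not already reduced to \cite{Pro76}.
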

As one would expect, $?^\iota$ denotes the anti-involution in the algebra generated by $\{\X_i,\X_i^\iota\}$ which interchanges $\X_i$ and $\X_i^\iota$.  Since $\mathbf{A}+\mathbf{A}^\iota=tr(\mathbf{A})\cdot Id_\V$, the theorem says the defining relation amongst the matrix concomitants is that the trace is central.

An interesting aspect of this presentation is that the algebra of matrix concomitants is produced directly, and then the algebra of invariants is found as the $\iota$-fixed subalgebra.

\section{Presentations of $\inv{m}{n}^{SL_2}$ and $\con{m}{n}^{SL_2}$}

This section provides presentations of the algebras of mixed invariants and mixed matrix concomitants which will be instrumental in proving Theorem \ref{thm: main}.  The problem of presenting these algebras in the mixed generality was first studied by Procesi \cite[Section 12]{Pro76} (for more general groups than $SL_2(\k)$), who found a generating set and outlined a brute force method for computing the relations.\footnote{However, as Procesi himself notes, producing a nice generating set for the relations via this method would likely be difficult or impossible.}  Our approach incorporates the $SL_2(\k)$-specific approach of Brumfiel and Hilden, and 
this specialization makes the problem tractable.


\subsection{The map $\nu$.}  Like the outer product map $\Theta: \V\otimes \V\rightarrow End(\V)$, there is an $SL_2(\k)$-equivariant isomorphism
\[ \V^n\otimes \V^n\stackrel{\sim}{\longrightarrow} End(\V^n)\simeq End(\V)^{n^2}\]
This induces a $SL_2(\k)$-equivariant map on coordinate rings
\begin{equation}
\nu:\inv{n^2}{0}\rightarrow \inv{0}{n}
\end{equation}
Here, $\nu(f)(v_1,...,v_n):= f(v_1v_1^\perp,v_1v_2^\perp,...v_nv_{n-1}^\perp,v_nv_n^\perp)$.  Tensoring this map with $\inv{m}{0}$
 or $\con{m}{0}$
  gives
\[ \nu:\inv{m+n^2}{0}\rightarrow \inv{m}{n}
,\;\;\;\;\;\;\;\;\;\; \nu:\con{m+n^2}{0}\rightarrow \con{m}{n}
\]
\begin{prop}\label{prop: invsurj}
The invariant maps $\nu:\inv{m+n^2}{0}^{SL_2}\rightarrow \inv{m}{n}^{SL_2}$ and $\nu:\con{m+n^2}{0}^{SL_2}\rightarrow \con{m}{n}^{SL_2}$ are surjective.
\end{prop}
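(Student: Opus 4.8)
The plan is to prove both surjectivity statements at once by a Reynolds-operator argument, reducing everything to the single observation that the image of $\nu$ already contains all invariants (resp. concomitants), and in fact to do so \emph{without} invoking Procesi's results. Since $SL_2(\k)$ is semisimple and $\nu$ is $SL_2(\k)$-equivariant, the Reynolds operators of Lemma \ref{lemma: Reynolds} on source and target commute with $\nu$. Hence if $f\in\inv{m}{n}^{SL_2}$ happens to lie in the image of the \emph{unrestricted} map $\nu:\inv{m+n^2}{0}\rightarrow\inv{m}{n}$, then writing $f=\nu(h)$ and applying $\gamma$ gives $\nu(\gamma(h))=\gamma(\nu(h))=\gamma(f)=f$, with $\gamma(h)\in\inv{m+n^2}{0}^{SL_2}$. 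Thus it suffices to establish the inclusion $\inv{m}{n}^{SL_2}\subseteq R$, where $R:=\nu(\inv{m+n^2}{0})\subseteq\inv{m}{n}$ is the image subalgebra; this no longer refers to the source invariant ring at all.

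Next I would identify $R$ explicitly. As $\inv{m+n^2}{0}$ is generated by the matrix entries of its $m+n^2$ coordinate functions, and $\nu$ sends these to the entries of $\X_1,\dots,\X_m$ and of the outer products $\oute{i}{j}=v_iv_j^\perp$, the subalgebra $R$ is generated by all entries of the $\mathsf{A}_i$ together with all entries of the $\oute{i}{j}$. In $\omega$-canonical coordinates the entries of $v_iv_j^\perp$ are, up to sign, exactly the products $(v_i)_a(v_j)_b$, so as $i,j$ range over all pairs these entries exhaust every degree-two monomial in the vector coordinates. Since the degree-two monomials generate the even-degree part of the polynomial ring in those coordinates (and the matrix entries carry vector-degree zero), $R$ is precisely the subalgebra of $\inv{m}{n}$ consisting of functions of even total degree in $v_1,\dots,v_n$. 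The only routine check here is this identification of outer-product entries with quadratic monomials.

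The crux is then to see that $SL_2(\k)$-invariance forces even vector-degree. The element $-Id_\V$ lies in $SL_2(\k)$ and is central, so it acts trivially on $End(\V)$ by conjugation while acting by $-1$ on $\V$; consequently its induced action on $\inv{m}{n}$ fixes every matrix coordinate and negates every vector coordinate. Any $SL_2(\k)$-invariant function is in particular fixed by $-Id_\V$, hence is even in the vector coordinates, hence lies in $R$. This gives $\inv{m}{n}^{SL_2}\subseteq R$, and combined with the Reynolds reduction of the first paragraph this proves surjectivity on invariants.

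For the concomitant statement I would run the identical argument on $\con{m}{n}=End(\V)\otimes\inv{m}{n}$, where $\nu$ is $Id_{End(\V)}\otimes\nu$, so its image is $End(\V)\otimes R$. Writing an invariant concomitant as $\sum_k\mathsf{M}_k\otimes f_k$ with the $\mathsf{M}_k$ a basis of $End(\V)$, the central element $-Id_\V$ again fixes the $End(\V)$ factor and acts on the $f_k$ by negating vectors; invariance forces each $f_k$ to be even, hence $f_k\in R$, so the concomitant lies in $End(\V)\otimes R$. The Reynolds argument then applies verbatim to the $SL_2(\k)$-module $\con{m+n^2}{0}$. I expect the only real subtlety to be bookkeeping—computing the image of $\nu$ correctly and applying the Reynolds operator to the unrestricted map \emph{before} cutting down to invariants. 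The conceptual key, and the step I would highlight, is the use of the central involution $-Id_\V\in SL_2(\k)$: it is exactly what makes the quadratic outer products $\oute{i}{j}$, rather than the vectors themselves, the natural invariant-theoretic building blocks.
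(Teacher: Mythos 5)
Your proposal is correct, but it proves the proposition by a genuinely different route than the paper. The paper's proof is essentially a citation: it invokes Procesi's first fundamental theorems for mixed invariants and concomitants (\cite[Theorems 12.1 and 12.2.d]{Pro76}), which supply generating sets (traces of words in the matrices together with $\omega$-pairings of vectors) that are visibly in the image of $\nu$. You instead give a self-contained argument: (a) the image of the \emph{unrestricted} map $\nu$ is exactly the subalgebra $\inv{m}{n}^e$ of functions of even degree in the vector variables, since the entries of the outer products $\oute{i}{j}$ exhaust, up to sign, all quadratic monomials in the coordinates of $v_1,\dots,v_n$; (b) every invariant is even, because the central element $-Id_\V\in SL_2(\k)$ acts trivially on $End(\V)$ by conjugation and by $-1$ on $\V$; and (c) the Reynolds operator, commuting with the equivariant map $\nu$, upgrades any preimage of an invariant to an invariant preimage, and this works verbatim for the (noncommutative) concomitant algebras since Lemma \ref{lemma: Reynolds} is stated for arbitrary $\k$-algebras with $G$-action. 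All three steps check out; in particular your identification of the image and the basis argument forcing each coefficient $f_k$ to be even are sound. It is worth noting that the paper itself introduces exactly your even/odd decomposition later (Section \ref{section: gettinginvariantrelations}, where it records $V^{SL_2}\subseteq V^e$), so your proof anticipates a tool the paper already has, and it makes Proposition \ref{prop: invsurj} independent of Procesi's Section 12 — though Theorem \ref{thm: invpro} is still needed downstream for the presentations, so the dependence on \cite{Pro76} is reduced rather than eliminated. What each approach buys: the paper's citation is shorter and comes bundled with explicit generators; yours is more elementary, localizes the key phenomenon in the central involution $-Id_\V$, and explains conceptually why the quadratic outer products suffice to hit all invariants.
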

\begin{proof}
In \cite[Theorem 12.1]{Pro76}, Procesi produces sets of generators for the algebra of mixed invariants, showing that they correspond to traces of products of matrices and evaluating $\omega$ on pairs of vectors.  Since these are in the image of $\nu$, surjectivity follows.  The analogous statement for mixed concomitants follows from the set of generators in \cite[Theorem 12.2.d]{Pro76}.
\end{proof}

\begin{rem}\label{rem: presentkernel}
Since the previous section provides a presentation of $\inv{m+n^2}{0}^{SL_2}$ and of $\con{m+n^2}{0}^{SL_2}$, this proposition reduces the problem of presenting the mixed invariants and matrix concomitants to the problem of understanding the kernel of $\nu$.
\end{rem}


\subsection{The kernel of $\nu$}

Under $\nu$, the first $m$ coordinate functions in $\con{m+n^2}{0}$ go to the coordinate functions in $\con{m}{n}$.  The last $n^2$ coordinate functions in $\con{m+n^2}{0}$ go to the outer product functions in $\con{m}{n}$, so by abuse of notation, the last $n^2$ coordinate functions in $\con{m+n^2}{0}$ will be denoted by $\oute{i}{j}$, for $1\leq i,j\leq n$.
\begin{lemma}\label{lemma: kernel}
The kernel of the map $\nu:\inv{n^2}{0}\rightarrow \inv{0}{n}$ is generated by
\begin{itemize}
\item $tr(\mathsf{A}\oute{i}{j})+tr(\mathsf{A}\oute{j}{i}^\iota)$, for $\mathsf{A}\in End(\V)$ and $1\leq i,j\leq n$
\item $tr(\mathsf{A}\oute{i}{j}\mathsf{B}\oute{i'}{j'})-tr(\mathsf{A}\oute{i}{j'})tr(\mathsf{B}\oute{i'}{j})$, for $\mathsf{A},\mathsf{B}\in End(\V)$ and $1\leq i,j\leq n$.
\end{itemize}
\end{lemma}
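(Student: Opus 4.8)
The plan is to set $J\subseteq \inv{n^2}{0}$ equal to the ideal generated by the two displayed families and prove $J=\ker\nu$ by establishing both inclusions. The inclusion $J\subseteq\ker\nu$ is a direct substitution: under $\nu$ the coordinate function $\oute{i}{j}$ becomes $v_iv_j^\perp$, so by Proposition \ref{prop: iotaprop} together with $(v_jv_i^\perp)^\iota=-v_iv_j^\perp$ the first family maps to $tr(\mathsf{A}v_iv_j^\perp)-tr(\mathsf{A}v_iv_j^\perp)=0$, while for the second family the scalar $v_j^\perp(\mathsf{B}v_{i'})=\omega(v_j,\mathsf{B}v_{i'})$ factors out of $tr(\mathsf{A}v_iv_j^\perp\mathsf{B}v_{i'}v_{j'}^\perp)$ and matches $tr(\mathsf{A}v_iv_{j'}^\perp)\,tr(\mathsf{B}v_{i'}v_j^\perp)$. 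The real content is the reverse inclusion, for which I would first identify the image of $\nu$ precisely and then match the two families against a known presentation.

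The map $\nu$ is graded and doubles degrees, and on linear generators $\nu(tr(\mathsf{A}\oute{i}{j}))=\omega(v_j,\mathsf{A}v_i)$. Writing $v_i=(x_i,y_i)$ in an $\omega$-canonical basis, these quadratic forms range over all monomials $x_ix_j,\;x_iy_j,\;y_ix_j,\;y_iy_j$, so $\mathrm{image}(\nu)$ is exactly the second Veronese subalgebra (the even-degree part) of $\inv{0}{n}=\k[x_1,y_1,\dots,x_n,y_n]$. Since $\inv{0}{n}$ is a domain, $\ker\nu$ is the prime ideal of relations of this Veronese subalgebra, which classically is generated in degree $2$ by the $2\times2$ minors of the generic symmetric matrix of quadratic monomials. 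Thus it suffices to show that, modulo $J$, the generators of $\inv{n^2}{0}$ reduce to the Veronese generators and that the second family supplies exactly these minor relations.

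The matching proceeds in two stages. The first family is linear in the coordinate functions: for $i\neq j$ it expresses every entry of $\oute{j}{i}$ through $\oute{i}{j}$ (via $\oute{j}{i}\equiv-\oute{i}{j}^\iota$), and for $i=j$ it forces $tr(\oute{i}{i})\equiv 0$. A bookkeeping count shows this imposes $4\binom{n}{2}+n=2n^2-n$ independent linear relations on the $4n^2$ linear generators, leaving exactly $2n^2+n=\binom{2n+1}{2}$ of them, in bijection with the symmetric quadratic monomials; so modulo the first family the surviving linear generators map isomorphically onto the degree-one piece of the Veronese. The second family is quadratic: each $tr(\mathsf{A}\oute{i}{j}\mathsf{B}\oute{i'}{j'})$ is a product of two coordinate entries and each $tr(\mathsf{A}\oute{i}{j'})tr(\mathsf{B}\oute{i'}{j})$ is a product of two linear generators, so each such element is a difference of two degree-four monomials in $x,y$, i.e. a Veronese relation. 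I would then check that as $\mathsf{A},\mathsf{B}$ and the four indices vary these differences span precisely the ideal of $2\times2$ minors, and invoke the classical Veronese presentation to conclude $\inv{n^2}{0}/J\cong\mathrm{image}(\nu)$, forcing $J=\ker\nu$.

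The main obstacle is this last matching: verifying that the second family generates exactly the minor ideal of the Veronese, neither missing minors nor contributing spurious relations, once the first family is imposed; this requires unwinding the $\mathsf{A},\mathsf{B}$-contractions in coordinates. If the clean comparison to the standard Veronese presentation proves awkward to make exact, I would instead finish by a self-contained normal-form argument: use the first family to restrict to ordered outer products and the second to rewrite any product of traces of words in the $\oute{i}{j}$ as a standard monomial in the surviving generators, and then check directly that these standard monomials are sent by $\nu$ to linearly independent elements of $\inv{0}{n}$, which gives injectivity of the induced map $\inv{n^2}{0}/J\to\inv{0}{n}$ and hence $J=\ker\nu$.
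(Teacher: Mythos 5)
Your proposal is correct in outline but takes a genuinely different route from the paper. The paper never mentions the Veronese: it transports the problem along the isomorphism $\xi:Sym^\bullet(\V^n\otimes\V^n)\simeq \inv{n^2}{0}$, identifies $\nu$ with the natural symmetrization map $Sym^\bullet(\V^n\otimes\V^n)\rightarrow Sym^\bullet(\V^n)$, observes that the kernel of symmetrization is spanned by permutation differences and hence generated by simple transpositions, and then computes the $\xi$-images of the two resulting kinds of transposition elements (a swap inside a single factor $v_i\otimes w_j$, and a swap of second halves across two factors), which are exactly the two displayed families; that argument is basis-free, $SL_2$-equivariant, and self-contained. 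Your route instead identifies $\mathrm{image}(\nu)$ as the even (second Veronese) subalgebra of $\k[x_1,y_1,\dots,x_n,y_n]$ and imports the classical fact that its relation ideal is generated by the $2\times 2$ minors of the generic symmetric matrix; this buys a concrete geometric picture and a clean dimension count (your bookkeeping $4n^2-(2n^2-n)=\binom{2n+1}{2}$ is right), at the cost of coordinates and an external theorem --- which your fallback normal-form argument would essentially be reproving by hand. The one step you leave as a plan does in fact go through cleanly, and it is worth recording why: taking $\mathsf{A}=e_{\beta\alpha}$ and $\mathsf{B}=e_{\delta\gamma}$ elementary matrices, the element $tr(\mathsf{A}\oute{i}{j}\mathsf{B}\oute{i'}{j'})-tr(\mathsf{A}\oute{i}{j'})tr(\mathsf{B}\oute{i'}{j})$ becomes $(\oute{i}{j})_{\alpha\delta}(\oute{i'}{j'})_{\gamma\beta}-(\oute{i}{j'})_{\alpha\beta}(\oute{i'}{j})_{\gamma\delta}$, which, after reindexing entries by composite row indices $p=(i,\alpha)$ and column indices $q=(j,\delta)$, is precisely the general $2\times 2$ minor $Z_{pq}Z_{p'q'}-Z_{pq'}Z_{p'q}$ of the $2n\times 2n$ matrix of coordinate entries, while your first family is exactly the ($\omega$-twisted) symmetry $Z_{pq}=Z_{qp}$ of that matrix; so the second family matches the minor ideal with nothing missing and nothing spurious, and the classical presentation of the second Veronese applies verbatim to give $J=\ker\nu$.
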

\begin{proof}
We use the isomorphism $\xi:\V^n\otimes \V^n\simeq (End(\V)^{n^2})^\vee$, given by
\[ \xi(v_p\otimes w_q):=v^\perp \oute{i}{j}w=\omega(v,\oute{i}{j}w)=tr(wv^\perp \oute{i}{j})\]
This induces an isomorphism of algebras
\[ \xi:Sym^\bullet(\V^n\otimes \V^n)\rightarrow \inv{n^2}{0}\]
The induced map
\[ \nu':Sym^\bullet(\V^n\otimes \V^n)\simeq \inv{n^2}{0}\stackrel{\nu}{\longrightarrow} \inv{0}{n}\simeq Sym^\bullet(\V^n)\]
is the natural symmetrization map.  These maps fit together into a commutative diagram,
\[\begin{array}{ccccc}
\mathcal{T}(\V^n\otimes \V^n) & \rightarrow & Sym^\bullet(\V^n\otimes \V^n) & \stackrel{\xi}{\longrightarrow} & \inv{n^2}{0} \\
\downarrow & & \downarrow \nu' & & \downarrow \nu \\
\mathcal{T}(\V^n) & \rightarrow & Sym^\bullet(\V^n) & \stackrel{\sim}{\longrightarrow}& \inv{0}{n}\\
\end{array}\]
Here, $\mathcal{T}$ denotes the tensor algebra and the maps from $\mathcal{T}$ to $Sym^\bullet$ are the symmetrization maps.

By definition, the kernel of
\[ \mathcal{T}(\V^n)\rightarrow Sym^\bullet(\V^n)\]
is spanned, as $j$ runs over all natural numbers, by elements of the form
\begin{equation}\label{eq: symkernel}
(v_1)_{p_1}\otimes (v_2)_{p_2}\otimes... \otimes (v_j)_{p_j} - (v_1)_{\sigma(p_1)}\otimes (v_2)_{\sigma(p_2)}\otimes... \otimes (v_j)_{\sigma(p_j)}
\end{equation}
for $v_i\in \V$, $1\leq p_i\leq j$ and $\sigma\in \Sigma_j$, the symmetric group on $j$ letters.  Since
$ \mathcal{T}(\V^n\otimes \V^n)\rightarrow \mathcal{T}(\V^n)$
is an inclusion, it follows that the kernel of
\[ \mathcal{T}(\V^n\otimes \V^n)\rightarrow Sym^\bullet(\V^n)\]
is spanned by elements of the form \eqref{eq: symkernel}.  Since $\mathcal{T}(\V^n\otimes \V^n)\rightarrow Sym^\bullet(\V^n\otimes \V^n)$ is surjective, the kernel of
\[ \nu':Sym^\bullet(\V^n\otimes \V^n)\rightarrow Sym^\bullet(\V^n)\]
is spanned by the the image of elements of the form \eqref{eq: symkernel}.

The symmetric group $\Sigma_j$ is generated by simple transpositions, and so the kernel of the map $\nu'$ is generated by two kinds of elements
\begin{itemize}
\item $v_i\otimes w_j-w_j\otimes v_i$, for $v,w\in \V$ and $1\leq i,j\leq n$.
\item $(v_i\otimes w_j)(v'_{i'}\otimes w'_{j'})-(v_i\otimes w'_{j'})(v'_{i'}\otimes w_{j})$, for $v,w,v',w'\in \V$ and $1\leq i,j,i',j'\leq n$.
\end{itemize}
We then compute the image of these generators under $\xi$.
\begin{eqnarray*}
\xi(v_i\otimes w_j-w_j\otimes v_i)
&=& tr(wv^\perp \oute{i}{j})-tr(vw^\perp \oute{j}{i})\\
&=& tr(wv^\perp \oute{i}{j})+ tr(wv^\perp \oute{j}{i}^\iota) \\
&=& tr(wv^\perp (\oute{i}{j}+\oute{j}{i}^\iota))
\end{eqnarray*}
Since products of the form $wv^\perp$ span $End(\V)$, this gives the first kind of generator.
\begin{eqnarray*}
\xi(v_i\otimes w_j)(v'_{i'}\otimes w'_{j'})
&=& tr(wv^\perp \oute{i}{j})tr(w'v'^\perp \oute{i'}{j'})\\
&=& tr(w (v'^\perp \oute{i'}{j'}w')v^\perp \oute{i}{j})\\
&=& tr(w'v^\perp \oute{i}{j}w v'^\perp \oute{i'}{j'})
\end{eqnarray*}
\begin{eqnarray*}
\xi(v_i\otimes w'_{j'})(v'_{i'}\otimes w_{j})
&=& tr(w'v^\perp \oute{i}{j'})tr(wv'^\perp \oute{i'}{j})
\end{eqnarray*}
Combining these gives the second class of relation.
\end{proof}

\begin{coro}\label{coro: kerspan}
The kernel $K$ of the map $\nu:\inv{m+n^2}{0}\rightarrow \inv{m}{n}$ is spanned by
\begin{itemize}
\item $tr(A\oute{i}{j})+tr(A\oute{j}{i}^\iota)$, for $A\in \con{m+n^2}{0}$ and $1\leq i,j\leq n$
\item $tr(A\oute{i}{j}B\oute{i'}{j'})-tr(A\oute{i}{j'})tr(B\oute{i'}{j})$, for $A,B\in \con{m+n^2}{0}$, and $1\leq i,j,i',j'\leq n$.
\end{itemize}
\end{coro}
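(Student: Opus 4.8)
The plan is to deduce the Corollary from Lemma \ref{lemma: kernel} by two elementary observations: that tensoring with the free module $\inv{m}{0}$ turns the generators of the $m=0$ kernel into ideal generators of $K$, and that a scalar function times a constant matrix is a general matrix concomitant, which is exactly what upgrades ``generated as an ideal by constant-matrix expressions'' to ``$\k$-spanned by general-concomitant expressions.'' Write $\nu_0:\inv{n^2}{0}\to\inv{0}{n}$ for the map of Lemma \ref{lemma: kernel}, and identify $\inv{m+n^2}{0}=\inv{m}{0}\otimes_\k\inv{n^2}{0}$ and $\inv{m}{n}=\inv{m}{0}\otimes_\k\inv{0}{n}$, so that the map of the Corollary is $\nu=\mathrm{id}_{\inv{m}{0}}\otimes\nu_0$. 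Since $\inv{m}{0}$ is a polynomial algebra, it is free (hence flat) as a $\k$-module, so tensoring the exact sequence $0\to\ker\nu_0\to\inv{n^2}{0}\to\inv{0}{n}$ by $\inv{m}{0}$ preserves exactness and yields $K=\inv{m}{0}\otimes_\k\ker\nu_0$. Because $\ker\nu_0$ is an ideal of $\inv{n^2}{0}$, this tensor product is precisely the ideal of $\inv{m+n^2}{0}$ generated by $\ker\nu_0$, and hence, by Lemma \ref{lemma: kernel}, the ideal generated by the two families $tr(\mathsf{A}\oute{i}{j})+tr(\mathsf{A}\oute{j}{i}^\iota)$ and $tr(\mathsf{A}\oute{i}{j}\mathsf{B}\oute{i'}{j'})-tr(\mathsf{A}\oute{i}{j'})tr(\mathsf{B}\oute{i'}{j})$ with $\mathsf{A},\mathsf{B}\in End(\V)$ constant.

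The second step converts this ideal description into the claimed $\k$-span. The key point is that $tr$ is $\inv{m+n^2}{0}$-linear and $\con{m+n^2}{0}=End(\V)\otimes_\k\inv{m+n^2}{0}$, so every $A\in\con{m+n^2}{0}$ can be written as a finite sum $A=\sum_k f_k\mathsf{A}_k$ with $f_k\in\inv{m+n^2}{0}$ and $\mathsf{A}_k\in End(\V)$ constant. I would first check that the proposed spanning elements lie in $K$: the expression $tr(A\oute{i}{j})+tr(A\oute{j}{i}^\iota)$ is $\k$-linear in $A$, and $tr(A\oute{i}{j}B\oute{i'}{j'})-tr(A\oute{i}{j'})tr(B\oute{i'}{j})$ is $\k$-bilinear in $(A,B)$; expanding $A$ and $B$ in this form writes each as an $\inv{m+n^2}{0}$-linear combination of the Lemma generators, all of which lie in the ideal $K$. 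For the reverse inclusion, a general element of $K$ is a finite sum $\sum_\alpha c_\alpha g_\alpha$ with $c_\alpha\in\inv{m+n^2}{0}$ and each $g_\alpha$ a Lemma generator; absorbing the scalar into the constant matrix (using $c_\alpha\,tr(\mathsf{A}\cdots)=tr((c_\alpha\mathsf{A})\cdots)$ and $c_\alpha\mathsf{A}\in\con{m+n^2}{0}$) rewrites each $c_\alpha g_\alpha$ as a single element of the proposed spanning set. The two inclusions then give equality.

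The argument is essentially bookkeeping, and I do not expect a genuine obstacle; the one place to be careful is the passage from ``ideal generated by the constant-matrix generators'' to ``$\k$-span of the general-concomitant expressions,'' which is exactly where the two roles of an element $A\in\con{m+n^2}{0}$ — as a carrier of scalar multipliers and as a source of the multilinear reductions — are used simultaneously. Note in particular that one never needs to recompute $\nu$ on these elements: membership in $K$ follows purely formally from Lemma \ref{lemma: kernel} together with the ideal structure, so the only structural input is the freeness of $\inv{m}{0}$ used to identify $\ker\nu$.
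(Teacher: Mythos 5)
Your proposal is correct and takes essentially the same route as the paper, whose proof likewise cites Lemma \ref{lemma: kernel} for the generators, observes that the kernel is spanned by $\inv{m+n^2}{0}$-multiples of them, and absorbs each coefficient through the trace into $A$ or $B$. The only difference is cosmetic: you spell out, via the tensor decomposition $\inv{m+n^2}{0}\simeq \inv{m}{0}\otimes_\k\inv{n^2}{0}$ and flatness of $\inv{m}{0}$, the step the paper leaves implicit, namely that the $m=0$ generators still generate the kernel after tensoring with $\inv{m}{0}$.
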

\begin{proof}
The preceeding lemma provides a generating set for this kernel.  The kernel is then spanned by $\inv{m+n^2}{0}$-multiples of these generators.  In all cases, this coefficient may be pulled through the trace, and absorbed into the definition of $A$ or $B$.
\end{proof}

\subsection{The invariant kernel of $\nu$} The next step is to find the kernel of the map $\nu$ when restricted to the invariants.

\begin{lemma}\label{lemma: invkernel}
The kernel of the map $\nu:\inv{m+n^2}{0}^{SL_2}\rightarrow \inv{m}{n}^{SL_2}$ is spanned  by
\begin{itemize}
\item $tr(\mathbf{A}\oute{i}{j})+tr(\mathbf{A}\oute{j}{i}^\iota)$, for $\mathbf{A}\in \con{m+n^2}{0}^{SL_2}$ and $1\leq i,j\leq n$
\item $tr(\mathbf{A}\oute{i}{j}\mathbf{B}\oute{i'}{j'})-tr(\mathbf{A}\oute{i}{j'})tr(\mathbf{B}\oute{i'}{j})$, for $\mathbf{A},\mathbf{B}\in \con{m+n^2}{0}^{SL_2}$ and $1\leq i,j\leq n$.
\end{itemize}
\end{lemma}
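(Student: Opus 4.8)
The plan is to pass from the vector-space spanning set of the full kernel $K=\ker(\nu)$ furnished by Corollary \ref{coro: kerspan} to a spanning set of the invariant kernel by averaging with the Reynolds operator $\gamma$ of Lemma \ref{lemma: Reynolds}. First I record the easy inclusion: taking $\mathbf{A},\mathbf{B}\in\con{m+n^2}{0}^{SL_2}$ in the two families of Corollary \ref{coro: kerspan} produces elements that are simultaneously in $K$ and $SL_2(\k)$-invariant, hence lie in $K^{SL_2}$. For the reverse inclusion, note that $\nu$ is $SL_2(\k)$-equivariant, so $K$ is a subrepresentation; since $SL_2(\k)$ is semisimple, $\gamma$ restricts to a linear surjection $K\twoheadrightarrow K^{SL_2}$, and therefore $K^{SL_2}$ is spanned by the images under $\gamma$ of the spanning elements of $K$. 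It thus suffices to show that $\gamma$ carries each such element into the span of the two claimed invariant families.

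The first family is immediate. Each outer-product coordinate $\oute{i}{j}$ lies in $\con{m+n^2}{0}^{SL_2}$, hence so does $\oute{j}{i}^\iota$ since $\iota$ is equivariant (Proposition \ref{prop: iotaprop}). Because $\gamma$ is a module map over the invariants and commutes with the equivariant map $tr$, for any $A\in\con{m+n^2}{0}$ we get
\[
\gamma\big(tr(A\oute{i}{j})+tr(A\oute{j}{i}^\iota)\big)=tr(\gamma(A)\oute{i}{j})+tr(\gamma(A)\oute{j}{i}^\iota),
\]
and $\gamma(A)\in\con{m+n^2}{0}^{SL_2}$, so the right-hand side is a first-type invariant generator. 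As $A$ ranges over $\con{m+n^2}{0}$ its image $\gamma(A)$ ranges over all of $\con{m+n^2}{0}^{SL_2}$, so every first-type invariant generator arises.

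The second family is the crux. Writing $R(A,B):=tr(A\oute{i}{j}B\oute{i'}{j'})-tr(A\oute{i}{j'})tr(B\oute{i'}{j})$, the assignment $A\otimes B\mapsto R(A,B)$ defines an $SL_2(\k)$-equivariant linear map $\beta$ from $\con{m+n^2}{0}\otimes_\k\con{m+n^2}{0}$ (diagonal action) to $\inv{m+n^2}{0}$. Since $\gamma$ commutes with equivariant maps, $\gamma(R(A,B))=\beta\big(\gamma(A\otimes B)\big)$, and as $A,B$ vary the elements $\gamma(A\otimes B)$ span $(\con{m+n^2}{0}\otimes_\k\con{m+n^2}{0})^{SL_2}$. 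The difficulty is that this invariant tensor space is strictly larger than $\con{m+n^2}{0}^{SL_2}\otimes\con{m+n^2}{0}^{SL_2}$: besides the decomposable invariants $\mathbf{A}\otimes\mathbf{B}$, which $\beta$ sends to the desired generators $R(\mathbf{A},\mathbf{B})$, there are ``linked'' invariants in which the two $End(\V)$ tensor slots are contracted against each other through $\omega$. My plan is to extract an explicit spanning set of $(\con{m+n^2}{0}\otimes_\k\con{m+n^2}{0})^{SL_2}$ from Procesi's first fundamental theorem (Theorem \ref{thm: conpro}), namely traces of words in the matrix coordinates with at most one appearance of each of the two abstract slots, and then evaluate $\beta$. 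Inserting the fixed coordinates $\oute{i}{j}$ and $\oute{i'}{j'}$ of $\beta$ into the slots of a linked invariant produces an expression of the form $tr(\cdots\oute{i}{j}\,C\,\oute{i'}{j'}\cdots)$; applying the rank-one identity $\oute{i}{j}\,C\,\oute{i'}{j'}=tr(C\oute{i'}{j})\,\oute{i}{j'}$, which holds modulo $K$ and is precisely what makes $R$ vanish under $\nu$, collapses it into a product of traces lying in the span of the second-type (and, using the first relation, first-type) invariant generators.

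I expect the genuine obstacle to be exactly this last step: classifying $(\con{m+n^2}{0}\otimes_\k\con{m+n^2}{0})^{SL_2}$ precisely enough to track how $\beta$ acts on the indecomposable, slot-linked invariants, and verifying that every such image reduces, via the rank-one collapse modulo $K$, to the two listed families. The first-family computation and the decomposable part of the second family are routine once the Reynolds formalism is in place; the bookkeeping for the linked invariants is where the real care is required.
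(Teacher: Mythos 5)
Your skeleton coincides with the paper's proof: reduce to computing the Reynolds image of the spanning set from Corollary \ref{coro: kerspan}, dispose of the first family by the equivariance of $A\mapsto tr(A\oute{i}{j})+tr(A\oute{j}{i}^\iota)$, and for the second family classify the invariants of a tensor square of $\con{m+n^2}{0}$ via the auxiliary-variable embedding $A\otimes B\mapsto tr(A\mathbf{Y}_1B\mathbf{Y}_2)$ together with Procesi's description of the invariants linear in $\mathbf{Y}_1$ and $\mathbf{Y}_2$. So the route is not a different one; the question is whether your treatment of the crux --- the non-decomposable (``linked'') invariants --- closes, and as written it has two concrete problems.

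First, the tensor product must be taken over $\inv{m+n^2}{0}$, not over $\k$: the embedding $A\otimes B\mapsto tr(A\mathbf{Y}_1B\mathbf{Y}_2)$ kills elements $fA\otimes B-A\otimes fB$, so it is not injective on $\con{m+n^2}{0}\otimes_\k\con{m+n^2}{0}$, and your claimed spanning set for the $\otimes_\k$ invariants (decomposables plus linked) is false there --- Reynolds images $\gamma(fA\otimes B-A\otimes fB)$ with non-invariant $f$ produce invariants of neither shape. This is repairable exactly because your $\beta$ factors through the tensor square over $\inv{m+n^2}{0}$, which is the version the paper's inner lemma classifies. Second, and more seriously, your justification of the final reduction --- applying the rank-one collapse ``which holds modulo $K$'' --- is circular: everything in sight already lies in $K^{SL_2}$, and the entire content of the lemma is that $K^{SL_2}$ lies in the span of the two listed families, so reducing modulo $K$ proves nothing; what is needed is reduction modulo that span. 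Nor can the collapse be applied termwise inside a linked invariant: writing it as $\sum_{k,l}e_{kl}\mathbf{A}\otimes e_{lk}\mathbf{B}$, the words flanking the slots involve the matrix units $e_{kl}$, which are not $SL_2$-invariant, so the individual collapse differences are not listed generators (those require invariant coefficients). In fact no reduction argument is needed at all: the paper evaluates the map on the linked invariant directly,
\[
\sum_{k,l}\Bigl[tr\bigl(e_{kl}\mathbf{A}\oute{i}{j}e_{lk}\mathbf{B}\oute{i'}{j'}\bigr)-tr\bigl(e_{kl}\mathbf{A}\oute{i}{j'}\bigr)tr\bigl(e_{lk}\mathbf{B}\oute{i'}{j}\bigr)\Bigr]
=tr\bigl(\mathbf{A}\oute{i}{j}\bigr)tr\bigl(\mathbf{B}\oute{i'}{j'}\bigr)-tr\bigl(\mathbf{A}\oute{i}{j'}\mathbf{B}\oute{i'}{j}\bigr),
\]
which is on the nose $-1$ times a second-family generator with indices relabeled. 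The ``bookkeeping'' you deferred as the genuine obstacle is thus a two-line matrix-unit computation, not a collapse-modulo-relations argument; with these two repairs your proof becomes the paper's.
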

\begin{proof}
Let $R_{i,j}$ denote the $SL_2$-subrepresentation of $\inv{m+n^2}{0}^{SL_2}$
spanned by
\begin{equation}\label{eq: rel1}
tr(A\oute{i}{j})+tr(A\oute{j}{i}^\iota),
\end{equation}
for $A\in \con{m+n^2}{0}$. There is a $SL_2$-equivariant surjection $\con{m+n^2}{0}\rightarrow R_{i,j}$ which sends $A$ to \eqref{eq: rel1}.  The induced map on invariants is a surjection, and so $R_{i,j}^{SL_2}$ is spanned by
\begin{equation}\label{eq: invrel1}
tr(\mathbf{A}\oute{i}{j})+tr(\mathbf{A}\oute{j}{i}^\iota),
\end{equation}
for $\mathbf{A}\in \con{m+n^2}{0}^{SL_2}$.

Let $R_{i,j,i',j'}$ denote the $SL_2$-subrepresentation $\inv{m+n^2}{0}^{SL_2}$ spanned by
\begin{equation}\label{eq: rel2}
tr(A\oute{i}{j}B\oute{i'}{j'})-tr(A\oute{i}{j'})tr(B\oute{i'}{j}),
\end{equation}
for $A,B\in \con{m+n^2}{0}$. There is a $SL_2$-equivariant surjection
\begin{equation}\label{eq: maprel2}
\con{m+n^2}{0}\otimes_{\inv{m+n^2}{0}}\con{m+n^2}{0}\rightarrow R_{i,j,i',j'}\end{equation}
which sends $A\otimes B$ to \eqref{eq: rel2}.
The induced map on invariants is a surjection.

Choose a basis $v_1,v_2$ for $\V$, and let $e_{ij}$ denote the $(i,j)$ elementary matrix in this basis.
\begin{lemma}
The invariants $(\con{m+n^2}{0}\otimes_{\inv{m+n^2}{0}}\con{m+n^2}{0})^{SL_2}$ are spanned by $\mathbf{A}\otimes \mathbf{B}$ and $\sum_{i,j\in\{1,2\}} e_{ij}\mathbf{A} \otimes e_{ji}\mathbf{B}$.
\end{lemma}
\begin{proof} Consider $\inv{m+n^2+2}{0}$, where the two final coordinate functions are denoted $\mathbf{Y}_1$ and $\mathbf{Y}_2$.  Then there is an $SL_2$-equivariant inclusion
\begin{equation}\label{eq: doubleinclusion}
 \con{m+n^2}{0}\otimes_{\inv{m+n^2}{0}}\con{m+n^2}{0}\hookrightarrow \inv{m+n^2+2}{0}\end{equation}
which sends $A\otimes B$ to $tr(A\mathbf{Y}_1B\mathbf{Y}_2)$.  The image of this map consists of the elements of $\inv{m+n^2+2}{0}$ which are linear in $\mathbf{Y}_1$ and in $\mathbf{Y}_2$.

By Theorem \ref{thm: invpro}, the subspace of $\inv{m+n^2+2}{0}^{SL_2}$ which is linear in $\mathbf{Y}_1$ and in $\mathbf{Y}_2$ is spanned by $tr(\mathbf{A}\mathbf{Y}_1\mathbf{B}\mathbf{Y}_2)$ and $tr(\mathbf{A}\mathbf{Y}_1)tr(\mathbf{B}\mathbf{Y}_2)$, for $\mathbf{A},\mathbf{B}\in \con{m+n^2+2}{0}^{SL_2}$.  Note that
\[\sum_{i,j\in\{1,2\}}tr(e_{ij}\mathbf{A}\mathbf{Y}_1e_{ji}\mathbf{B}\mathbf{Y}_2) =
\sum_{i,j\in\{1,2\}}(\mathbf{A}\mathbf{Y}_1)_{jj}(\mathbf{B}\mathbf{Y}_2)_{ii} = tr(\mathbf{A}\mathbf{Y}_1)tr(\mathbf{B}\mathbf{Y}_2)
\]
It follows that the image of the span of
\[\{\mathbf{A}\otimes\mathbf{B}\}\bigcup\left\{\sum_{i,j\in \{1,2\}}e_{ij}\mathbf{A}\otimes e_{ji}\mathbf{B}\right\}, \text{ for }{\mathbf{A},\mathbf{B}\in \con{m+n^2+2}{0}^{SL_2}}\] under the map \eqref{eq: doubleinclusion} spans $\inv{m+n^2+2}{0}^{SL_2}$.
\end{proof}

The map \eqref{eq: maprel2} sends $\mathbf{A}\otimes \mathbf{B}$ to
\begin{equation}\label{eq: invrel2}
tr(\mathbf{A}\oute{i}{j}\mathbf{B}\oute{i'}{j'})-tr(\mathbf{A}\oute{i}{j'})tr(\mathbf{B}\oute{i'}{j}),
\end{equation}
and $\sum_{i,j\in\{1,2\}} e_{ij}\mathbf{A} \otimes e_{ji}\mathbf{B}$ to
\begin{align*}
\sum_{i,j\in\{1,2\}} &tr(e_{ij}\mathbf{A} \oute{i}{j} e_{ji}\mathbf{B}\oute{i'}{j'}) -  tr(e_{ij}\mathbf{A} \oute{i}{j})tr( e_{ji}\mathbf{B}\oute{i'}{j'}) \\
=& \sum_{i,j\in\{1,2\}} (\mathbf{A} \oute{i}{j})_{jj}(\mathbf{B}\oute{i'}{j'})_{ii} -  (\mathbf{A} \oute{i}{j})_{ji}(\mathbf{B}\oute{i'}{j'})_{ij}\\
=&tr(\mathbf{A} \oute{i}{j})tr(\mathbf{B}\oute{i'}{j'})-  tr(\mathbf{A} \oute{i}{j}\mathbf{B}\oute{i'}{j'})
\end{align*}
Notice that this is of the same form as \eqref{eq: invrel2}, except with a minus sign and $i'$ and $j$ exchanged.  It follows that $R_{i,j,i',j'}^{SL_2}$ is spanned by elements of these two forms.

Corollary \ref{coro: kerspan} states that the kernel $K$ of $\nu$ is
\[ \sum_{1\leq i,j\leq n}R_{i,j}+\sum_{1\leq i,j,i',j'\leq n}R_{i,j,i',j'}\]
The Reynolds operator (Lemma \ref{lemma: Reynolds}) is a linear projection $\gamma :\inv{m+n^2}{0}\dashrightarrow \inv{m+n^2}{0}^{SL_2}$, which sends subrepresentations to their invariant subspaces.  Applying this to $K$ gives
\[\gamma\left(\sum_{1\leq i,j\leq n}R_{i,j}+\sum_{1\leq i,j,i',j'\leq n}R_{i,j,i',j'}\right) =
\sum_{1\leq i,j\leq n}R_{i,j}^{SL_2}+\sum_{1\leq i,j,i',j'\leq n}R_{i,j,i',j'}^{SL_2}\]
Therefore, $K^{SL_2}$ is spanned by elements of the form \eqref{eq: invrel1} and \eqref{eq: invrel2}, as the $\mathbf{A}$ and $\mathbf{B}$ run over $\con{m+n^2}{0}$ and $i,j,i',j'$ run over $1,...,n$.
%
%
\end{proof}

\begin{lemma}\label{lemma: conkernel}
The kernel of the map $\nu:\con{m+n^2}{0}^{SL_2}\rightarrow \con{m}{n}^{SL_2}$ is generated by
\begin{itemize}
\item $\oute{i}{j}+\oute{j}{i}^\iota$, for $1\leq i,j\leq n$
\item $\oute{i}{j}\mathbf{B}\oute{i'}{j'}-tr(\mathbf{B}\oute{i'}{j})\oute{i}{j'}$ or equivalently,
\[\oute{i}{j}\mathbf{B}\oute{i'}{j'}-\mathbf{B}\oute{i'}{j}\oute{i}{j'}-\oute{i'}{j}^\iota\mathbf{B}^\iota\oute{i}{j'}\] for $\mathbf{B}\in \con{m+n^2}{0}^{SL_2}$ and $1\leq i,j,i',j'\leq n$.
\end{itemize}
\end{lemma}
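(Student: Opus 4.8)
The plan is to reduce the concomitant statement to the invariant statement of Lemma \ref{lemma: invkernel} by adjoining a single auxiliary matrix variable. Write $J$ for the two-sided ideal of $\con{m+n^2}{0}^{SL_2}$ generated by the two displayed families. The inclusion $J\subseteq\ker\nu$ is routine: since $\nu$ sends $\oute{i}{j}$ to $v_iv_j^\perp$, the identity $\nu(\oute{i}{j})^\iota=-\nu(\oute{j}{i})$ kills the first family, and the second dies by the computation already used in the proof of Lemma \ref{lemma: kernel} (pulling the scalar $v_j^\perp\nu(\mathbf{B})v_{i'}=tr(\nu(\mathbf{B})\nu(\oute{i'}{j}))$ through the product). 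The real content is the reverse inclusion $\ker\nu\subseteq J$.

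For this I would introduce $\inv{m+n^2+1}{0}$ with a final coordinate function $\mathbf{Y}$ and use the one-variable analogue of the inclusion \eqref{eq: doubleinclusion},
\[\con{m+n^2}{0}^{SL_2}\hookrightarrow\inv{m+n^2+1}{0}^{SL_2},\qquad A\longmapsto tr(A\mathbf{Y}),\]
whose image is exactly the invariants linear in $\mathbf{Y}$ (nondegeneracy of the trace form recovers $A$ from $tr(A\mathbf{Y})$). Carrying $\mathbf{Y}$ along as an ordinary matrix variable, this inclusion intertwines $\nu:\con{m+n^2}{0}^{SL_2}\to\con{m}{n}^{SL_2}$ with $\nu:\inv{m+n^2+1}{0}^{SL_2}\to\inv{m+1}{n}^{SL_2}$, because $tr(\nu(A)\mathbf{Y})=\nu(tr(A\mathbf{Y}))$. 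Injectivity of $A\mapsto tr(A\mathbf{Y})$ then shows $\mathbf{A}\in\ker\nu$ if and only if $tr(\mathbf{A}\mathbf{Y})$ lies in the invariant kernel of $\nu$ in index $(m+n^2+1,0)$; that is, $\ker\nu$ is carried isomorphically onto the $\mathbf{Y}$-linear part of that invariant kernel.

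Next I would apply Lemma \ref{lemma: invkernel} in the $(m+n^2+1,0)$ setting, where its kernel is spanned by the two invariant families with $\mathbf{A}',\mathbf{B}'$ now permitted to involve $\mathbf{Y}$, and extract the part linear in $\mathbf{Y}$. Here I use that a $\mathbf{Y}$-linear invariant concomitant is a sum of terms $\mathbf{C}\mathbf{Y}\mathbf{D}$ and $tr(\mathbf{C}\mathbf{Y})\mathbf{D}$ with $\mathbf{C},\mathbf{D}\in\con{m+n^2}{0}^{SL_2}$ free of $\mathbf{Y}$ (the one-variable shadow of the spanning statement proved inside Lemma \ref{lemma: invkernel}). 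Substituting these and pulling $\mathbf{Y}$ to the front by cyclicity of the trace, e.g.
\[tr(\mathbf{C}\mathbf{Y}\mathbf{D}\,\oute{i}{j}\mathbf{B}'\oute{i'}{j'})-tr(\mathbf{C}\mathbf{Y}\mathbf{D}\,\oute{i}{j'})\,tr(\mathbf{B}'\oute{i'}{j})=tr\big(\mathbf{Y}\,\mathbf{D}[\oute{i}{j}\mathbf{B}'\oute{i'}{j'}-tr(\mathbf{B}'\oute{i'}{j})\oute{i}{j'}]\mathbf{C}\big),\]
each coefficient of $\mathbf{Y}$ is visibly a two-sided multiple $\mathbf{D}(\text{generator})\mathbf{C}$ and so lies in $J$; the first invariant family reduces in the same way to $\mathbf{Y}$-coefficients $\mathbf{D}(\oute{i}{j}+\oute{j}{i}^\iota)\mathbf{C}$.

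The main obstacle will be the bookkeeping in this last step. Two points need care. First, I must verify that $J$ is $\iota$-stable: the $\iota$-image of the first generator is again a first generator (reindexed), and that of the second is handled by the stated equivalent form together with $A+A^\iota=tr(A)\cdot Id_\V$ from Proposition \ref{prop: iotaprop}. This $\iota$-stability is exactly what lets the terms in which $\mathbf{Y}$ sits inside a trace—producing scalar coefficients $tr(\mathbf{D}(\oute{i}{j}+\oute{j}{i}^\iota))\cdot\mathbf{C}$—be absorbed into $J$ via $tr(x)\cdot Id_\V=x+x^\iota$. Second, one must track every placement of the $\mathbf{Y}$-linear factor (in $\mathbf{A}'$ versus $\mathbf{B}'$, open versus inside a trace) and check each lands in $J$; the alternate form of the second generator recorded in the statement is precisely what absorbs the cases where the surviving outer-product indices come out transposed.
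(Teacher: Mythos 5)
Your proposal is correct and follows essentially the same route as the paper's proof: the paper likewise adjoins a single auxiliary matrix variable $\mathbf{Y}$, embeds $\con{m+n^2}{0}^{SL_2}$ into $\inv{m+n^2+1}{0}^{SL_2}$ via $A\mapsto tr(A\mathbf{Y})$ (with image the $\mathbf{Y}$-linear invariants), uses the same commutative square intertwining the two maps $\nu$ so that $\ker\nu$ becomes the $\mathbf{Y}$-linear part of the invariant kernel of Lemma \ref{lemma: invkernel}, and extracts generators by writing $\mathbf{A}=\mathbf{B}\mathbf{Y}\mathbf{C}$ and cycling $\mathbf{Y}$ out of the trace to land on $\mathbf{C}(\oute{i}{j}+\oute{j}{i}^\iota)\mathbf{B}$. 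If anything, you are more explicit than the paper's terse ``a similar argument works'' about the terms of the form $tr(\mathbf{C}\mathbf{Y})\mathbf{D}$ and the $\iota$-stability of the ideal needed to absorb them, which is a faithful filling-in of the same argument rather than a different one.
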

\begin{proof}
We consider $\inv{m+1}{n}$ with final coordinate function denoted $\mathbf{Y}$.  There is an inclusion
$\con{m}{n}\rightarrow \inv{m+1}{n}$, which sends $A$ to $tr(A\mathbf{Y})$.  The image is the subspace of $\inv{m+1}{n}$ which is linear in $\mathbf{Y}$.  We have a commutative diagram
\[\begin{array}{ccc}
\con{m+n^2}{0}^{SL_2} & \hookrightarrow & \inv{m+n^2+1}{0}^{SL_2} \\
\downarrow \nu & & \downarrow \nu \\
\con{m}{n}^{SL_2}  &\hookrightarrow & \inv{m+1}{n}^{SL_2}\\
\end{array}\]
Hence, the kernel of $\nu$ in $\con{m+n^2}{0}^{SL_2}$ is the preimage of the kernel of $\nu$ in $\inv{m+n^2+1}{0}^{SL_2}$.

If $tr(\mathbf{A}\oute{i}{j})+tr(\mathbf{A}\oute{j}{i}^\iota)\in \con{m+n^2+1}{0}$ is linear in $\mathbf{Y}$, then $\mathbf{A}=\mathbf{B}\mathbf{Y}\mathbf{C}$, for some $\mathbf{B},\mathbf{C}\in \con{m+n^2}{0}$.  Then
\[ \mathbf{C}(\oute{i}{j}+\oute{j}{i}^\iota)\mathbf{B}\stackrel{tr(-\mathbf{Y})}{\longrightarrow} tr(\mathbf{C}(\oute{i}{j}+\oute{j}{i}^\iota)\mathbf{B}\mathbf{Y})=tr(\mathbf{A}\oute{i}{j})+tr(\mathbf{A}\oute{j}{i}^\iota)\]
Thus, the preimage of $tr(\mathbf{A}\oute{i}{j})+tr(\mathbf{A}\oute{j}{i}^\iota)$ is in the ideal generated by $\oute{i}{j}+\oute{j}{i}^\iota$.  A similar argument works for the other class of relations.
\end{proof}

\subsection{Presentations of mixed invariants and mixed concomitants}

By Proposition \ref{prop: invsurj}, a presentation of $\inv{m}{n}^{SL_2}$ (resp. $\con{m}{n}^{SL_2}$) can be produced by choosing a presentation of $\inv{m+n^2}{0}^{SL_2}$ (resp. $\con{m+n^2}{0}^{SL_2}$) and adding a relation coming from the generators for the kernel of $\nu$.

Combining Procesi's presentation of $\inv{m+n^2}{0}^{SL_2}$ with the previous presentation of the kernel of $\nu$ gives the following.

\begin{thm}\label{thm: invpres}
The algebra $\inv{m}{n}^{SL_2}$ is generated as a commutative algebra by $tr(\mathbf{A})$, as $\mathbf{A}$ runs over all words in the coordinate functions $\{\X_i,\oute{i}{j}\}$.
The relations are generated by
\begin{itemize}
\item Procesi's F-relation \eqref{eq: F},
\item
$tr(\mathbf{A}\oute{i}{j})=tr(\mathbf{A}\oute{j}{i})-tr(\mathbf{A})tr(\oute{j}{i})$, for $\mathbf{A}$ a word in the $\{\X_i,\oute{i}{j}\}$ and $1\leq i,j\leq n$,
\item $tr(\mathbf{A}\oute{i}{j}\mathbf{B}\oute{i'}{j'})=tr(\mathbf{A}\oute{i}{j'})tr(\mathbf{B}\oute{i'}{j})$, for $\mathbf{A},\mathbf{B}$ words in $\{\X_i,\oute{i}{j}\}$ and $1\leq i,j,i',j'\leq n$.
\end{itemize}
\end{thm}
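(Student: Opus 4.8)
The plan is to assemble the presentation from three ingredients already in place: the surjection $\nu:\inv{m+n^2}{0}^{SL_2}\twoheadrightarrow \inv{m}{n}^{SL_2}$ of Proposition \ref{prop: invsurj}, Procesi's presentation of the source algebra $\inv{m+n^2}{0}^{SL_2}$ (Theorem \ref{thm: invpro}), and the description of $\ker\nu$ on invariants from Lemma \ref{lemma: invkernel}. The general principle I would invoke is that if a commutative algebra $S$ has a known presentation by generators $G$ and relations $R$, and $\phi:S\twoheadrightarrow T$ is surjective with $\ker\phi$ generated as an ideal by a set $K$, then $T$ is presented by the generators $\phi(G)$ together with the relations $R\cup K$ (expressed in terms of $\phi(G)$). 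I would apply this with $S=\inv{m+n^2}{0}^{SL_2}$, $T=\inv{m}{n}^{SL_2}$, and $\phi=\nu$.

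First I would identify the generators. Procesi's generators for $\inv{m+n^2}{0}^{SL_2}$ are the traces $tr(\mathbf{A})$ with $\mathbf{A}$ a word in the $m+n^2$ coordinate functions. Under $\nu$ the first $m$ coordinate functions map to the $\X_i$ and the last $n^2$ to the outer product functions $\oute{i}{j}$ (the identification recorded just before Lemma \ref{lemma: kernel}); since $\nu$ is an algebra map it commutes with trace, so the images are precisely $tr(\mathbf{A})$ for $\mathbf{A}$ a word in $\{\X_i,\oute{i}{j}\}$, which is the asserted generating set. For the relations, Procesi's only relation is the F-relation, and because $\nu$ is an algebra homomorphism the F-relation descends verbatim to $\inv{m}{n}^{SL_2}$, giving the first listed relation. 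The remaining relations come from $\ker\nu$: Lemma \ref{lemma: invkernel} supplies a $\k$-spanning set for this kernel — hence, a fortiori, a generating set for it as an ideal — consisting of $tr(\mathbf{A}\oute{i}{j})+tr(\mathbf{A}\oute{j}{i}^\iota)$ and $tr(\mathbf{A}\oute{i}{j}\mathbf{B}\oute{i'}{j'})-tr(\mathbf{A}\oute{i}{j'})tr(\mathbf{B}\oute{i'}{j})$.

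The second of these is already the third listed relation. To convert the first into the stated form I would invoke Proposition \ref{prop: iotaprop}(3), which gives $\oute{j}{i}^\iota=tr(\oute{j}{i})\cdot Id_\V-\oute{j}{i}$; substituting and using the $\inv{m}{n}^{SL_2}$-linearity of the trace turns $tr(\mathbf{A}\oute{j}{i}^\iota)$ into $tr(\mathbf{A})tr(\oute{j}{i})-tr(\mathbf{A}\oute{j}{i})$, so the kernel element rearranges to exactly the second listed relation.

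The one point requiring care — and the main obstacle — is reconciling the quantifiers: Lemma \ref{lemma: invkernel} ranges over invariant concomitants $\mathbf{A},\mathbf{B}\in\con{m+n^2}{0}^{SL_2}$, whereas the theorem ranges only over words in $\{\X_i,\oute{i}{j}\}$. I would close this gap by recalling that $\con{m+n^2}{0}^{SL_2}$ is generated over $\inv{m+n^2}{0}^{SL_2}$ by such words (Theorems \ref{thm: conpro} and \ref{thm: conBH}), so a general invariant concomitant is an invariant-scalar combination of words; since the trace is $\inv{m+n^2}{0}^{SL_2}$-linear, these scalars pull out of the trace and each resulting kernel element becomes an invariant-scalar multiple of a word-relation. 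As the scalars are themselves polynomials in the chosen trace generators, such multiples lie in the ideal generated by the word-relations, so the smaller family of relations in the statement generates the same ideal. This completes the presentation.
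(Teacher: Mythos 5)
Your proposal is correct and takes essentially the same route as the paper: combine Procesi's presentation of $\inv{m+n^2}{0}^{SL_2}$ (Theorem \ref{thm: invpro}) with the surjection $\nu$ of Proposition \ref{prop: invsurj} and the kernel description of Lemma \ref{lemma: invkernel}, then eliminate $\iota$ via $\mathbf{A}+\mathbf{A}^\iota=tr(\mathbf{A})\cdot Id_\V$ from Proposition \ref{prop: iotaprop}. Your explicit reduction from general invariant concomitants $\mathbf{A},\mathbf{B}\in\con{m+n^2}{0}^{SL_2}$ to words in $\{\X_i,\oute{i}{j}\}$, using Theorem \ref{thm: conpro} and the $\inv{m+n^2}{0}^{SL_2}$-linearity of the trace, correctly fills in a step the paper's proof leaves implicit.
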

\begin{proof}
The generators and the first relation come from Theorem \ref{thm: invpro}.  The second two relations come from Lemma \ref{lemma: invkernel}.  Note that any occurence of $\X_i^\iota$ or $\oute{i}{j}^\iota$ may be replaced by $tr(\X_i)-\X_i$ or $tr(\oute{i}{j})-\oute{i}{j}$, so as to avoid using $\iota$. In this way, the second relation is a reformulation of $tr(\mathbf{A}\oute{i}{j})=-tr(\mathbf{A}\oute{j}{i}^\iota)$.
\end{proof}

Combining Brumfiel and Hilden's presentation of $\con{m+n^2}{0}^{SL_2}$ with the previous presentation of the kernel of $\nu$ gives the following.

\begin{thm}\label{thm: conpres}
The algebra $\con{m}{n}^{SL_2}$ is generated by
\begin{itemize}
\item \emph{coordinate functions} $\X_i$, for $1\leq i\leq m$,
\item \emph{adjoint coordinate functions} $\X_i^\iota$, for $1\leq i\leq m$,
\item \emph{outer product functions} $\oute{i}{j}$, for $1\leq i,j\leq n$.
\end{itemize}
The relations may be written two ways.  Let $?^\iota$ be the anti-involution with $(\X_i)^\iota:=\X_i^\iota$, and $(\oute{i}{j})^\iota:=-\oute{j}{i}$.  Then the relations are generated by
\begin{enumerate}
\item $(\mathbf{A}+\mathbf{A}^\iota)\mathbf{B}=\mathbf{B}(\mathbf{A}+\mathbf{A}^\iota)$, for all $\mathbf{A},\mathbf{B}$ words in $\{\X_i,\X_i^\iota,\oute{i}{j}\}$.
\item $\oute{i}{j}\mathbf{A}\oute{i'}{j'} = \mathbf{A}\oute{i'}{j}\oute{i}{j'}-\oute{j}{i'}\mathbf{A}^\iota\oute{i}{j'}$,
 for all $1\leq i,j,i',j'\leq n$ and $\mathbf{A}$ a word in $\{\X_i,\X_i^\iota,\oute{i}{j}\}$.
\end{enumerate}

Let $tr(\mathbf{A}):=\mathbf{A}+\mathbf{A}^\iota$.  Then the relations are generated by
\begin{enumerate}
\item $ tr(\mathbf{A})\mathbf{B}=\mathbf{B}tr(\mathbf{A})$, for all $\mathbf{A},\mathbf{B}$ words in $\{\X_i,\X_i^\iota,\oute{i}{j}\}$.
\item $\oute{i}{j}\mathbf{A}\oute{i'}{j'} = tr(\mathbf{A}\oute{i'}{j})\oute{i}{j'}$ for all $1\leq i,j,i',j'\leq n$ and $\mathbf{A}$ a word in $\{\X_i,\X_i^\iota,\oute{i}{j}\}$.
\end{enumerate}
\end{thm}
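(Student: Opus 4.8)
The plan is to run the same argument as for Theorem \ref{thm: invpres}, but with the Brumfiel--Hilden presentation of the concomitants (Theorem \ref{thm: conBH}) playing the role of Procesi's presentation of the invariants, and with Lemma \ref{lemma: conkernel} supplying the kernel relations in place of Lemma \ref{lemma: invkernel}. By Proposition \ref{prop: invsurj} the map $\nu:\con{m+n^2}{0}^{SL_2}\rightarrow \con{m}{n}^{SL_2}$ is surjective, so $\con{m}{n}^{SL_2}$ is the quotient of $\con{m+n^2}{0}^{SL_2}$ by the two-sided ideal $\ker(\nu)$; I would obtain the presentation by combining a presentation of the source with generators for this kernel.

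First I would apply Theorem \ref{thm: conBH} to $\con{m+n^2}{0}^{SL_2}$: it is generated by the coordinate functions $\X_i$ and their adjoints $\X_i^\iota$ (for $1\leq i\leq m$), together with the last $n^2$ coordinate functions $\oute{i}{j}$ and their adjoints $\oute{i}{j}^\iota$, subject only to the trace-central relations $(\mathbf{A}+\mathbf{A}^\iota)\mathbf{B}=\mathbf{B}(\mathbf{A}+\mathbf{A}^\iota)$. Passing to the quotient by $\ker(\nu)$ then adjoins the two families of kernel generators from Lemma \ref{lemma: conkernel}.

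The substance of the proof is matching the quotient presentation to the stated one. The first family of kernel generators, $\oute{i}{j}+\oute{j}{i}^\iota$, imposes $\oute{i}{j}^\iota=-\oute{j}{i}$; this is exactly the rule $(\oute{i}{j})^\iota:=-\oute{j}{i}$ used to define the anti-involution in the statement, and it lets me eliminate each adjoint generator $\oute{i}{j}^\iota$ in favor of $-\oute{j}{i}$, reducing the generating set to $\{\X_i,\X_i^\iota,\oute{i}{j}\}$. Relation (1) is then inherited verbatim from Theorem \ref{thm: conBH}, since every word in the larger generating set reduces, after the substitution, to a word in $\{\X_i,\X_i^\iota,\oute{i}{j}\}$. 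For relation (2), I would take the second form of the kernel generator from Lemma \ref{lemma: conkernel}, namely $\oute{i}{j}\mathbf{A}\oute{i'}{j'}-\mathbf{A}\oute{i'}{j}\oute{i}{j'}-\oute{i'}{j}^\iota\mathbf{A}^\iota\oute{i}{j'}$, and replace $\oute{i'}{j}^\iota$ by $-\oute{j}{i'}$; this yields precisely $\oute{i}{j}\mathbf{A}\oute{i'}{j'}=\mathbf{A}\oute{i'}{j}\oute{i}{j'}-\oute{j}{i'}\mathbf{A}^\iota\oute{i}{j'}$. The equivalent traced forms of both relations follow from the identity $\mathbf{A}+\mathbf{A}^\iota=tr(\mathbf{A})\cdot Id_\V$ of Proposition \ref{prop: iotaprop} (extended to $\con{m}{n}$): substituting $tr(\mathbf{A})$ for $\mathbf{A}+\mathbf{A}^\iota$ turns relation (1) into $tr(\mathbf{A})\mathbf{B}=\mathbf{B}\,tr(\mathbf{A})$ and, using the first form of the kernel generator $\oute{i}{j}\mathbf{A}\oute{i'}{j'}-tr(\mathbf{A}\oute{i'}{j})\oute{i}{j'}$ directly, gives $\oute{i}{j}\mathbf{A}\oute{i'}{j'}=tr(\mathbf{A}\oute{i'}{j})\oute{i}{j'}$.

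The step I expect to require the most care is the elimination of the adjoint outer-product generators: I must check that no Brumfiel--Hilden relation involving some $\oute{i}{j}^\iota$ is lost or distorted under the substitution $\oute{i}{j}^\iota=-\oute{j}{i}$, i.e. that the anti-involution $?^\iota$ redefined on the reduced generating set by $(\oute{i}{j})^\iota:=-\oute{j}{i}$ agrees with the restriction of the anti-involution on the source. Once this compatibility is verified, the trace-central relations descend cleanly and the presentation is exactly the one claimed.
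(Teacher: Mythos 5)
Your proposal is correct and follows essentially the same route as the paper: surjectivity of $\nu$ from Proposition \ref{prop: invsurj}, the Brumfiel--Hilden presentation (Theorem \ref{thm: conBH}) for $\con{m+n^2}{0}^{SL_2}$, the kernel generators from Lemma \ref{lemma: conkernel}, and elimination of the generators $\oute{i}{j}^\iota$ via $\oute{i}{j}^\iota=-\oute{j}{i}$. The anti-involution compatibility check you flag is precisely the point the paper's terser proof compresses into ``which in turn eliminates the need for that relation,'' and your sign bookkeeping in passing between the two forms of the second kernel generator matches the paper's.
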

\begin{proof}
Theorem \ref{thm: conBH} implies that $\con{m}{n}^{SL_2}$ is generated by $\{\X_i,\X_i^\iota,\oute{i}{j},\oute{i}{j}^\iota\}$, and that the trace of any element is central.  Lemma \ref{lemma: conkernel} shows the rest of the relations are given by $\oute{i}{j}=-\oute{j}{i}^\iota$ and
$\oute{i}{j}\mathbf{A}\oute{i'}{j'}=tr(\mathbf{A}\oute{i'}{j})\oute{i}{j'}$.
The relation $\oute{i}{j}=-\oute{j}{i}^\iota$ may be used to eliminate the variables $\oute{i}{j}^\iota$, which in turn eliminates the need for that relation.  The remaining generators and relations give the theorem as stated.
\end{proof}

\section{Presenting the character algebra}  

In this section we apply the results of the previous section to the decorated character algebra.

\subsection{A presentation of the representation algebra}\label{section: repalg}

  First, we will need a presentation of the representation algebra $\Rep(G,M)$; recall that $\Rep(G,M)$ is the coordinate ring of the representation scheme $Hom((G,M),(SL_2(\k),\V))$.

(\emph{Fixing a presentation for $(G,M)$}) The definition of the representation algebra will require an explicit presentation of $(G,M)$.  Choose a finite generating set $\{g_i\}_{1\leq i\leq n}\subset G$ for $G$, and a finite set of $G$-orbit representatives $\{p_j\}_{1\leq j\leq n}\subset M$.\footnote{For simplicity, we will assume the generating set $\{g_i\}$ is closed under inverses.  That way, every element of $g$ can be written as a positive word in $\{g_i\}$.}  Let $\{g_{i_1}g_{i_2}...g_{i_k}\}$ be a generating set of relations for $G$, and for each $j$, let $\{g_{j_1}g_{j_2}...g_{j_k}\}$ be a generating set for the stablizer of $p_j$ in $M$.

\begin{defn} \label{defn: repalg} Define the \textbf{representation algebra} $\Rep(G,M)$ to be the quotient of $\inv{m}{n}$ by the ideal generated by...
\begin{enumerate}
\item $\det(\X_i)-1$ for all $1\leq i\leq n$,
\item $\varphi\left(\X_{i_1}\X_{i_2}...\X_{i_j}-Id_\V\right)$ for each relation in $G$, and each $\varphi\in End(\V)^\vee$.
\item $\varphi\left((\X_{j_1}\X_{j_2}...\X_{j_k}-Id_\V)\mathbf{v}_{j}\right)$ for each $1\leq j\leq m$, each generator of the stablizer of $p_j$, and each $\varphi\in \V^\vee$.
\end{enumerate}
\end{defn}
\begin{rem}
If $M$ is a free $G$-set, then the third class of relations is redundant.  If $G$ is a free group and $M$ is a free $G$-set, then the second and third classes of relations are empty.
\end{rem}


The representation is the universal algebra of decorated representations of $(G,M)$ into $(SL_2(\k),\V)$ in the following sense.

\begin{prop}\label{prop: unirep}
Let $A$ be a commutative $\k$-algebra.  Then there is a natural, $SL_2$-equivariant bijection (functorial in $A$)
\[ \left\{ \begin{array}{c}\text{group action maps}\\ (G,M)\rightarrow (SL_2(A),A\otimes \V)\end{array}\right\}
\stackrel{\sim}{\longrightarrow}
\left\{ \begin{array}{c}\text{$\k$-algebra maps}\\ \Rep(G,M)\rightarrow A\end{array}\right\}\]
\end{prop}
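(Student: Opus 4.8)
The plan is to reduce the statement to two universal properties: that of the symmetric algebra $\inv{m}{n}=Sym_\k(End(\V)^m\oplus\V^n)^\vee$, and that of the chosen presentation of $(G,M)$ recorded in Definition \ref{defn: repalg}. First I would make precise the identification of $A$-points of the affine space $End(\V)^m\oplus\V^n$ with tuples. Since $\inv{m}{n}$ is a free commutative $\k$-algebra on the dual space, a $\k$-algebra map $\inv{m}{n}\rightarrow A$ is the same datum as a $\k$-linear map $(End(\V)^m\oplus\V^n)^\vee\rightarrow A$, i.e. an element of $(End(\V)^m\oplus\V^n)\otimes_\k A$, i.e. a tuple $(\mathsf{A}_1,\dots,\mathsf{A}_m,v_1,\dots,v_n)$ of $m$ matrices $\mathsf{A}_i\in M_2(A)$ and $n$ vectors $v_j\in A\otimes\V$. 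Under this identification the coordinate function $\X_i$ returns $\mathsf{A}_i$ and $\mathbf{v}_j$ returns $v_j$, so the problem becomes: describe which tuples arise from group action maps $(G,M)\rightarrow(SL_2(A),A\otimes\V)$, and match that condition against the defining ideal of $\Rep(G,M)$.

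The second step is to translate the three families of ideal generators into honest matrix and vector equations. The key elementary observation is that for $\mathsf{B}\in M_2(A)$ one has $\mathsf{B}=0$ if and only if $\varphi(\mathsf{B})=0$ for all $\varphi\in End(\V)^\vee$, and similarly a vector $w\in A\otimes\V$ vanishes iff $\varphi(w)=0$ for all $\varphi\in\V^\vee$, because the coordinate functionals detect the vanishing of all entries over the commutative ring $A$. Hence a $\k$-algebra map $\inv{m}{n}\rightarrow A$ given by a tuple $(\mathsf{A}_i,v_j)$ factors through the quotient $\Rep(G,M)$ precisely when (1) $\det\mathsf{A}_i=1$ for every $i$, (2) $\mathsf{A}_{i_1}\cdots\mathsf{A}_{i_j}=Id_\V$ in $M_2(A)$ for every chosen relator of $G$, and (3) $(\mathsf{A}_{j_1}\cdots\mathsf{A}_{j_k}-Id_\V)v_j=0$ in $A\otimes\V$ for every chosen generator of the stabilizer of $p_j$.

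The third step identifies these conditions with a group action map. Condition (1) places each $\mathsf{A}_i$ in $SL_2(A)$; using that $\{g_i\}$ is closed under inverses, the assignment $g_i\mapsto\mathsf{A}_i$ defines a homomorphism out of the free group, and by the universal property of the presentation $G=\langle g_i\mid \text{relators}\rangle$ it descends to a homomorphism $\rho_G:G\rightarrow SL_2(A)$ exactly when condition (2) holds. Writing $M=\bigsqcup_j G/\mathrm{Stab}(p_j)$ as a $G$-set, a $G$-equivariant map $\rho_M:M\rightarrow A\otimes\V$ is determined by the images $v_j=\rho_M(p_j)$, and such an assignment is well-defined, independent of the coset representative, precisely when each stabilizer generator fixes $v_j$, which is condition (3). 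Thus $(\mathsf{A}_i,v_j)\mapsto(\rho_G,\rho_M)$ is the claimed bijection, with inverse sending a group action map to the tuple of images of the chosen generators and orbit representatives. Naturality in $A$ is then automatic, since every step is built from composition of algebra maps and substitution of entries, which commute with any $\k$-algebra homomorphism $A\rightarrow A'$; and $SL_2$-equivariance follows because $g\in SL_2(\k)$ sends $(\mathsf{A}_i,v_j)$ to $(g\mathsf{A}_ig^{-1},gv_j)$ on one side and $(\rho_G,\rho_M)$ to $(g\rho_Gg^{-1},g\rho_M)$ on the other, and the identification of the first step intertwines these by construction.

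I expect the only genuinely delicate point to be the passage in the third step from "the finitely many chosen relators map to $Id_\V$" to "$\rho_G$ is a well-defined homomorphism on all of $G$" (and likewise for the stabilizer conditions). This is exactly where the universal property of a group presentation enters, and it is what guarantees that imposing only the finitely many chosen relations in the ideal suffices: no further relations are forced on the tuple because every consequence of the relators automatically maps to $Id_\V$. Verifying this cleanly requires invoking that the chosen relators normally generate the kernel of the free group surjection onto $G$, and that the chosen stabilizer words generate $\mathrm{Stab}(p_j)$ — both of which are exactly the hypotheses built into the choice of presentation preceding Definition \ref{defn: repalg}. Everything else is the routine bookkeeping of the two universal properties.
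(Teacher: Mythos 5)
Your proposal is correct and takes essentially the same route as the paper's proof: both identify $\k$-algebra maps out of $\inv{m}{n}$ with tuples $(\mathsf{A}_i,v_j)$, use the coordinate functionals $\varphi$ to convert the three families of ideal generators into the conditions $\det(\mathsf{A}_i)=1$, relators mapping to $Id_\V$, and stabilizer generators fixing $v_j$, and then match these conditions against group action maps via the chosen generators and orbit representatives. The presentation-theoretic point you flag as delicate (that the relators normally generate the kernel and the chosen words generate $\mathrm{Stab}(p_j)$) is precisely the step the paper compresses into the sentence ``the remaining relations in $\Rep(G,M)$ imply that $f_\alpha$ is a map of group actions,'' so your write-up simply makes the same argument more explicit.
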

\begin{proof}
Given a map of group actions
\[ f=(f_G,f_M): (G,M)\rightarrow (SL_2(A),A\otimes \V),\]
define a map $\alpha_f:\inv{m}{n}\rightarrow A$ by 
\[ \alpha_f(\varphi(\X_i))=f_G(g_i), \text{ for each } 1\leq i\leq m\text{, and each }\varphi\in End(\V)^\vee\]
\[ \alpha_f(\varphi(\mathbf{v}_i)) =f_M(v_j), \text{ for each } 1\leq j\leq n\text{, and each }\varphi\in \V^\vee\]
Since $\det(f_G(g_i))=1$, the map $\alpha_f$ kills the elements $\det(\X_i)-1$ in $\inv{m}{n}$.  Since $f$ is a map of group actions, $\alpha_f$ kills the remaining relations for $\Rep(G,M)$, and so $\alpha_f$ descends to a well-defined map
$ \Rep(G,M)\rightarrow A$.

Given a map $\alpha:\Rep(G,M)\rightarrow A$, define a map of group actions $f_\alpha=(f_G,f_M)$ as follows.  Let $f_G(g_i)$ be the unique element in $End(\V)$ such that
\[ \varphi(f_G(g_i))=\alpha(\varphi(\X_i)), \text{ for each }\varphi\in End(\V)^\vee\]
Let $f_M(v_j)$ be the unique element in $\V$ such that
\[ \varphi(f_M(v_j))=\alpha(\varphi(\mathbf{v}_j)), \text{ for each }\varphi\in \V^\vee\]
Since $\alpha$ kills $\det(\X_i)-1$, then $f_G(g_i)\in SL_2(\k)$.  The remaining relations in $\Rep(G,M)$ imply that $f_\alpha:(G,M)\rightarrow (SL_2(\k),\V)$ is a map of group actions.

These two constructions are directly seen to be mutual inverses which are $SL_2$-equivariant.  Functoriality is straight-forward.
\end{proof}

As a consequence of the above universal property, the algebra $\Rep(G,M)$ is independent of the choice of presentation for $(G,M)$.

\begin{coro}
The $\k$-points of the scheme $Rep(G,M):=Spec(\Rep(G,M))$ are in bijection with $Hom((G,M),(SL_2(\k),\V))$.
\end{coro}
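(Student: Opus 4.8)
The plan is to read this off immediately from the universal property established in Proposition \ref{prop: unirep}, specialized to the ground field. By definition, a $\k$-point of the affine scheme $Rep(G,M) = Spec(\Rep(G,M))$ is a $\k$-algebra homomorphism $\Rep(G,M) \to \k$; equivalently, $Rep(G,M)(\k) = Hom_{\k\text{-alg}}(\Rep(G,M),\k)$ under the functor-of-points description. So the entire task reduces to evaluating the bijection of Proposition \ref{prop: unirep} at the specific algebra $A = \k$.

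First I would set $A = \k$ in Proposition \ref{prop: unirep}, which yields a natural bijection between $\k$-algebra maps $\Rep(G,M) \to \k$ and group action maps $(G,M) \to (SL_2(\k), \k \otimes \V)$. Then I would identify the target: the group $SL_2(\k)$ is literally $SL_2(\k)$, and the scalar extension $\k \otimes_\k \V$ is canonically $\V$, so the right-hand set is exactly $Hom((G,M),(SL_2(\k),\V))$. Composing these two identifications shows that the $\k$-points of $Rep(G,M)$ are in bijection with $Hom((G,M),(SL_2(\k),\V))$, as claimed.

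There is essentially no obstacle here — the corollary is a formal specialization of the proposition, and the naturality already recorded in Proposition \ref{prop: unirep} ensures the bijection is canonical. The only point worth flagging is the convention for $\k$-points: I would use the functor-of-points definition (maps $Spec(\k) \to Rep(G,M)$, equivalently $\k$-algebra maps to $\k$), which matches the shape of Proposition \ref{prop: unirep} exactly and requires no further input. If instead one prefers to define $\k$-points as maximal ideals with residue field $\k$, I would observe that $(G,M)$ being finitely generated forces $\Rep(G,M)$ to be a finitely-generated $\k$-algebra (a quotient of $\inv{m}{n}$ with $m,n < \infty$), so for $\k = \mathbb{C}$ the two notions of $\k$-point coincide by the Nullstellensatz; either way the identification is identical.
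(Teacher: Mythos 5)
Your proposal is correct and is exactly the argument the paper intends: the corollary is stated as an immediate consequence of Proposition \ref{prop: unirep} specialized at $A=\k$, with $\k\otimes\V$ canonically identified with $\V$. Your extra remark about the functor-of-points convention (and the Nullstellensatz fallback) is harmless bonus detail but not needed.
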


\subsection{Getting invariant relations}\label{section: gettinginvariantrelations}

The aim is to present the $SL_2$-invariants in $\Rep(G,M)$.  The previous section gives a presentation of $\inv{m}{n}^{SL_2}$.  However, the second two relations given in Definition \ref{defn: repalg} are not $SL_2$-equivariant, and so Lemma \ref{lemma: invrel} cannot be used.  This is fixed by tensoring with $End(\V)$, and passing to the even subalgebra.

The defining surjection
\[ \pi:\inv{m}{n}\rightarrow \Rep(G,M)\]
gives a surjection
\[ \pi_\mathcal{E}:\con{m}{n}\rightarrow End(\V)\otimes \Rep(G,M)\]

Using $-Id_\V\in SL_2(\k)$, any $SL_2(\k)$-representation $V$ splits into \textbf{even} and \textbf{odd} summands,
\[ V^e:=\{v\in V| (-Id_\V )\cdot v=v\}\]
\[ V^o:=\{v\in V| (-Id_\V )\cdot v=-v\}\]
The even part contains the $SL_2(\k)$-invariants; $V^{SL_2}\subseteq V^e$.

One checks that $End(\V)$ is even and $\V$ is odd.  It then follows that $\inv{m}{n}^e\subset \inv{m}{n}$ is the subalgebra of functions which have even degree in $\V^n$,
\[ \inv{m}{n}^e=\{f\in \inv{m}{n} \,|\, f(\mathsf{A}_1,...,\mathsf{A}_n,v_1,...,v_m)=f(\mathsf{A}_1,...,\mathsf{A}_n,-v_1,,...,-v_m)\}\]
The even parts of $\con{m}{n}$ and $\Rep(G,M)$ are similarly equal to the subalgebras of functions even in the $\V^n$ part.

Let $\pi_\mathcal{E}^e$ denote the restricted surjection
\[ \pi_\mathcal{E}^e:\con{m}{n}^e\rightarrow End(\V)\otimes   \Rep(G,M)^e\]
\begin{rem}\emph{Technical necessities.}\label{rem: gettinginvariant}  Tensoring with $End(\V)$ and restricting to the even part are necessary so that the kernel of $\pi^e_\mathcal{E}$ is generated by $SL_2$-invariants.  However, in simple cases this is unnecessary.  If $M$ is a free $G$-set, $\pi_\mathcal{E}$ already has an invariantly-generated kernel.  If $G$ is a free group and $M$ is a free $G$-set, $\pi$ already has an invariantly-generated kernel.
\end{rem}

\begin{lemma}
The kernel of $\pi^e_{\mathcal{E}}$ is generated by
\begin{enumerate}
\item $\X_i\X_i^\iota-Id_\V$ for all $1\leq i\leq n$,
\item $\X_{i_1}\X_{i_2}...\X_{i_j}-Id_\V$ for each relation in $G$.
\item $(\X_{j_1}\X_{j_2}...\X_{j_k}-Id_\V)\oute{j}{j'}$ for each $1\leq j,j'\leq m$, and each generator of the stablizer of $p_j$.
\end{enumerate}
\end{lemma}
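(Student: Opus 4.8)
The plan is to reduce the statement to an elementary fact about two-sided ideals in a $2\times 2$ matrix ring over a commutative base, the base being the even part $\inv{m}{n}^e$. The first step I would take is to pin down $\con{m}{n}^e$ concretely. Since $End(\V)$ is an even representation and $\V$ is odd, the $-Id_\V$-action on $\con{m}{n}=End(\V)\otimes\inv{m}{n}$ is trivial on the first factor and negates vectors in the second, so in an $\omega$-canonical basis
\[
\con{m}{n}^e = End(\V)\otimes \inv{m}{n}^e \cong M_2(R), \qquad R:=\inv{m}{n}^e .
\]
Identically, $End(\V)\otimes \Rep(G,M)^e\cong M_2(\bar R)$ with $\bar R:=\Rep(G,M)^e$, and $\pi^e_{\mathcal{E}}$ is just $M_2$ applied to the restriction $\pi^e:R\to \bar R$ of the defining surjection $\pi:\inv{m}{n}\to\Rep(G,M)$. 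Writing $I:=\ker\pi$ for the ideal generated by relations (1)--(3) of Definition \ref{defn: repalg}, and $I^e:=I\cap R$, the kernel of $\pi^e_{\mathcal{E}}$ is therefore $M_2(\ker\pi^e)=M_2(I^e)$.

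Next I would invoke the standard description of ideals of a matrix ring: every two-sided ideal of $M_2(R)$ is $M_2(J)$ for a unique ideal $J\subseteq R$, and $M_2(J)$ is generated as a two-sided ideal by any set of matrices whose collected entries generate $J$. The extraction of an entry from a matrix is effected by left- and right-multiplication with the constant elementary matrices $e_{ab}\in End(\V)$, which are even and hence lie in $\con{m}{n}^e$, so this holds inside the even subalgebra. With this reduction, the lemma becomes the purely commutative claim that the entries of the three families of matrices in the statement generate the ideal $I^e$ of $R$.

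Matching the first two families is routine. By Proposition \ref{prop: iotaprop}(4), $\X_i\X_i^\iota=\det(\X_i)\cdot Id_\V$, so $\X_i\X_i^\iota-Id_\V=(\det(\X_i)-1)\,Id_\V$, whose entries generate the same ideal as the scalar $\det(\X_i)-1$ of Definition \ref{defn: repalg}(1). The entries of $\X_{i_1}\X_{i_2}\cdots\X_{i_j}-Id_\V$ are exactly the values $\varphi(\X_{i_1}\cdots\X_{i_j}-Id_\V)$, $\varphi\in End(\V)^\vee$, which are the generators in Definition \ref{defn: repalg}(2). Both families consist of even elements, so they already lie in $I^e$.

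Family (3) is where the even/odd bookkeeping does the real work, and I expect it to be the main obstacle. The generator $\varphi((\X_{j_1}\cdots\X_{j_k}-Id_\V)\mathbf{v}_j)$ of Definition \ref{defn: repalg}(3) is \emph{odd}, being linear in the vectors, so it does not lie in $I^e$ and generates nothing by itself inside $\con{m}{n}^e$; one must instead describe $I^e$ directly. Setting $\mathsf{N}_j:=\X_{j_1}\cdots\X_{j_k}-Id_\V$ and splitting each coefficient into even and odd parts, the even part of $I$ is
\[
I^e = R\{\det(\X_i)-1\}+R\{\varphi(\mathsf{N}_i)\}+\inv{m}{n}^o\cdot\{\varphi(\mathsf{N}_j\mathbf{v}_j)\},
\]
since an even element of $I$ must pair the odd generators with odd coefficients. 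As $\inv{m}{n}^o=\sum_{j',\psi}R\cdot\psi(\mathbf{v}_{j'})$, the last summand is the $R$-ideal generated by the products $\varphi(\mathsf{N}_j\mathbf{v}_j)\,\psi(\mathbf{v}_{j'})$. These are precisely the entries of $\mathsf{N}_j\mathbf{v}_j\mathbf{v}_{j'}^\perp=(\X_{j_1}\cdots\X_{j_k}-Id_\V)\oute{j}{j'}$, because $\mathsf{N}_j\mathbf{v}_j$ is the vector with components $\varphi(\mathsf{N}_j\mathbf{v}_j)$ and $\mathbf{v}_{j'}^\perp=\omega(\mathbf{v}_{j'},-)$ has components of the form $\psi(\mathbf{v}_{j'})$. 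This matches family (3). Combining the three matchings shows the entries of families (1)--(3) generate $I^e$, whence the two-sided ideal they generate in $\con{m}{n}^e$ is $M_2(I^e)=\ker\pi^e_{\mathcal{E}}$, as claimed.
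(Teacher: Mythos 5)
Your proof is correct, and it organizes the argument differently from the paper. The paper works directly inside the noncommutative algebra: it writes the kernel of $\pi^e_{\mathcal{E}}$ as the span of products $A\cdot r$ with $A$ and the relation $r$ of matching parity, converts the odd relations of Definition \ref{defn: repalg}(3) into entries of $(\X_{j_1}\cdots\X_{j_k}-Id_\V)\oute{j}{i}$ by pairing with odd coefficients exactly as you do, and then proves containment in the two-sided ideal $I$ via the ad hoc but efficient identity $fvw^\perp\, tr(v'w'^\perp B)=f(vw'^\perp)B(v'w^\perp)$, which re-inserts the scalar evaluation $\varphi(B)$ as a genuine matrix factor. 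You replace that trace identity with a structural reduction: identifying $\con{m}{n}^e\cong M_2(R)$ over the commutative even base $R=\inv{m}{n}^e$ and invoking the bijection $J\mapsto M_2(J)$ between ideals of $R$ and two-sided ideals of $M_2(R)$ (valid here since the elementary matrices $e_{ab}$ are constant, hence even), after which the lemma becomes a purely commutative statement about $I^e=I\cap R$. Your decomposition $I^e=R\{\det(\X_i)-1\}+R\{\varphi(\mathsf{N}_i)\}+\inv{m}{n}^o\cdot\{\varphi(\mathsf{N}_j\mathbf{v}_j)\}$, together with $\inv{m}{n}^o=\sum_{j',\psi}R\cdot\psi(\mathbf{v}_{j'})$, is the same parity bookkeeping the paper performs, so the essential computational content coincides; what your route buys is that the matrix-ideal containment becomes a cited general fact rather than a computation, and the even/odd analysis is isolated in the commutative ring where it is easiest to verify. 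The paper's route is shorter on infrastructure but leaves the key containment $\ker(\pi^e_{\mathcal{E}})\subseteq I$ resting on the one-line trace manipulation, which your $M_2(J)$ correspondence makes transparent.
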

\begin{proof}Let $I$ denote the two-sided ideal in $\con{m}{n}^e$ generated by the elements in the statement of the lemma, so that we wish to prove $I=\ker(\pi^e_{\mathcal{E}})$.

The kernel of $\pi_\mathcal{E}$ is spanned by elements of the form $Ar$, where $A\in \con{m}{n}$ and $r$ runs over all the relations in Definition \ref{defn: repalg}.  Such an element is in $\con{m}{n}^e$ if both $A$ and $r$ are in $\con{m}{n}^e$, or if they are both  in $\con{m}{n}^o$.  The first two relations in Definition \ref{defn: repalg} are even, the third is odd.  In the third case, we can find $\{\varphi_i\}\in \V^\vee$ and $A_i\in\, \con{m}{n}^e$ such that
\[ A\varphi\left((\X_{j_1}\X_{j_2}...\X_{j_k}-Id_\V)\mathbf{v}_{j}\right) = \left[\sum_{1\leq i\leq m} A_i\varphi_i(\mathbf{v}_i)\right]\varphi\left((\X_{j_1}\X_{j_2}...\X_{j_k}-Id_\V)\mathbf{v}_{j}\right)\]
For any $\varphi,\varphi'\in \V^\vee$, there is a $\varphi''\in End(\V)^\vee$ such that
\[ \varphi'(\mathbf{v}_i)\varphi\left((\X_{j_1}\X_{j_2}...\X_{j_k}-Id_\V)\mathbf{v}_{j}\right)=\varphi''\left((\X_{j_1}\X_{j_2}...\X_{j_k}-Id_\V)\oute{j}{i}\right)\]

The kernel of $\pi^e_{\mathcal{E}}$ is then spanned by elements of the form (as $A$ runs over $^e\con{m}{n}$):
\begin{enumerate}
\item $A(\det(\X_i)-1)$.
\item $A\varphi\left(\X_{i_1}\X_{i_2}...\X_{i_j}-Id_\V\right)$ for each relation in $G$, and each $\varphi\in End(\V)^\vee$.
\item $A\varphi\left((\X_{j_1}\X_{j_2}...\X_{j_k}-Id_\V)\oute{j}{i}\right)$ for each $1\leq i,j\leq m$, each generator of the stablizer of $p_j$, and each $\varphi\in End(\V)^\vee$.
\end{enumerate}

By Proposition \ref{prop: iotaprop}, $\X_i\X_i^\iota-Id_\V=(\det(\X_i)-1) Id_\V$.  Then $I\subseteq \ker(\pi^e_\mathcal{E})$, since
\[Id_\V\in End(\V)\otimes End(\V)^\vee \]

Now, let $A\in \con{m}{n}^e$ and $\varphi\in End(\V)^\vee$.  Assume first that $A=fvw^\perp$ and $\varphi=tr(v'w'^\perp-)$, for $f\in \inv{m}{n}^e$ and $v,w,v',w'\in \V$.  Then, for $B\in I$,
\[ A\varphi(B) = fvw^\perp tr(v'w'^\perp B) = f(vw'^\perp) B(v'w^\perp) \in I\]
By linearity, $A\varphi(B)\in I$ for general $A$ and $\varphi$.  Then the above spanning set for $\ker(\pi^e_{\mathcal{E}})$is contained in $I$.  Therefore, $I=\ker(\pi^e_{\mathcal{E}})$.
\end{proof}

The map $\pi_\mathcal{E}^e$ then has a kernel which is generated by $SL_2$-invariants.  By Lemma \ref{lemma: invrel}, the kernel of this map when restricted to invariants is generated by the same set.

\begin{coro}\label{coro: matcharpres}
The kernel of $\pi_{\mathcal{E}}^{SL_2}:\con{m}{n}^{SL_2}\rightarrow (End(\V)\otimes \Rep(G,M))^{SL_2}$ is generated by
\begin{enumerate}
\item $\X_i\X_i^\iota-Id_\V$ for all $1\leq i\leq n$,
\item $\X_{i_1}\X_{i_2}...\X_{i_j}-Id_\V$ for each relation in $G$.
\item $(\X_{j_1}\X_{j_2}...\X_{j_k}-Id_\V)\oute{j}{j'}$ for each $1\leq j,j'\leq m$, and each generator of the stablizer of $p_j$.
\end{enumerate}
\end{coro}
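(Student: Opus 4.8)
The plan is to obtain this corollary as an immediate consequence of the preceding lemma, which presents $\ker(\pi^e_{\mathcal{E}})$ as the two-sided ideal of $\con{m}{n}^e$ generated by the three listed families, combined with the invariant-ideal principle of Lemma \ref{lemma: invrel}. The one fact to pin down at the outset is that each of these generators is $SL_2$-invariant, so that Lemma \ref{lemma: invrel} applies with $B=\con{m}{n}^e$ and $G=SL_2(\k)$.

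First I would confirm the invariance of the generators. The $\X_i$ are invariant because $SL_2(\k)$ acts on $\con{m}{n}$ by conjugation; the $\X_i^\iota$ are invariant because $\iota$ is $SL_2$-equivariant (Proposition \ref{prop: iotaprop}); the $\oute{j}{j'}$ are invariant because $\Theta$ is equivariant; and $Id_\V$ is fixed by conjugation. Since sums and products of invariants are invariant, each of $\X_i\X_i^\iota - Id_\V$, $\X_{i_1}\cdots\X_{i_j}-Id_\V$, and $(\X_{j_1}\cdots\X_{j_k}-Id_\V)\oute{j}{j'}$ lies in $\con{m}{n}^{SL_2}$.

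With invariance in hand the remainder is bookkeeping. Since $SL_2(\k)$ is semisimple, Lemma \ref{lemma: invrel} gives that the invariant part of $\ker(\pi^e_{\mathcal{E}})$ is the two-sided ideal of $(\con{m}{n}^e)^{SL_2}$ generated by the same three families. Because the invariants are contained in the even part, $(\con{m}{n}^e)^{SL_2}=\con{m}{n}^{SL_2}$, and since $\con{m}{n}^{SL_2}\subseteq \con{m}{n}^e$, the restriction of $\pi_{\mathcal{E}}$ to invariants factors through $\pi^e_{\mathcal{E}}$; hence $\ker(\pi_{\mathcal{E}}^{SL_2})=\ker(\pi^e_{\mathcal{E}})\cap \con{m}{n}^{SL_2}=\ker(\pi^e_{\mathcal{E}})^{SL_2}$, which is exactly the claimed ideal.

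I do not expect a genuine obstacle here: the substantive content—the reduction to even parts and the explicit ideal computation—was already extracted in the preceding lemma, and this corollary merely transports that result across Lemma \ref{lemma: invrel}. The only point that must be handled with care is the identification $\ker(\pi_{\mathcal{E}}^{SL_2})=\ker(\pi^e_{\mathcal{E}})^{SL_2}$, which rests on the containment $\con{m}{n}^{SL_2}\subseteq \con{m}{n}^e$; were this overlooked one might fear odd-degree invariant relations, but there are none, so the argument closes without further work.
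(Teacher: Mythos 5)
Your proposal is correct and follows exactly the paper's route: the paper likewise deduces the corollary by observing that the generators of $\ker(\pi^e_{\mathcal{E}})$ are $SL_2$-invariant and then invoking Lemma \ref{lemma: invrel}. Your additional bookkeeping --- verifying the invariance of $\X_i$, $\X_i^\iota$, $\oute{j}{j'}$ and the identification $\ker(\pi_{\mathcal{E}}^{SL_2})=\ker(\pi^e_{\mathcal{E}})^{SL_2}$ via $\con{m}{n}^{SL_2}\subseteq\con{m}{n}^e$ --- simply makes explicit what the paper leaves implicit.
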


This corollary can be combined with the presentation of $\con{m}{n}^{SL_2}$ (Theorem \ref{thm: conpres}) to yield a 
presentation of $(End(\V)\otimes \Rep(G,M))^{SL_2}$.
\begin{coro}
The algebra $(End(\V)\otimes \Rep(G,M))^{SL_2}$ is generated by $\{\X_i,\X_i^\iota,\oute{j}{k}\}$ for $1\leq i\leq m$ and $1\leq j,k\leq m$, with relations generated by
\begin{enumerate}
 \item $\X_i\X_i^\iota-Id_\V$ for all $1\leq i\leq n$,
\item $\X_{i_1}\X_{i_2}...\X_{i_j}-Id_\V$ for each relation in $G$.
\item $(\X_{j_1}\X_{j_2}...\X_{j_k}-Id_\V)\oute{j}{j'}$ for each $1\leq j,j'\leq m$, and each generator of the stablizer of $p_j$.
\item $(\mathbf{A}+\mathbf{A}^\iota)\mathbf{B}=\mathbf{B}(\mathbf{A}+\mathbf{A}^\iota)$, for all $\mathbf{A},\mathbf{B}$ words in $\{\X_i,\X_i^\iota,\oute{i}{j}\}$.
\item $\oute{i}{j}\mathbf{A}\oute{i'}{j'} = \mathbf{A}\oute{i'}{j}\oute{i}{j'}-\oute{j}{i'}\mathbf{A}^\iota\oute{i}{j'}$,
 for all $1\leq i,j,i',j'\leq n$ and $\mathbf{A}$ a word in $\{\X_i,\X_i^\iota,\oute{i}{j}\}$.
\end{enumerate}
\end{coro}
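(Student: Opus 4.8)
The plan is to realize $(End(\V)\otimes \Rep(G,M))^{SL_2}$ as a quotient of the already-presented algebra $\con{m}{n}^{SL_2}$, and then to invoke the elementary principle that a presentation of a quotient algebra is obtained from a presentation of the ambient algebra by adjoining, as extra relations, a generating set for the kernel. Concretely, the defining surjection $\pi_\mathcal{E}:\con{m}{n}\rightarrow End(\V)\otimes\Rep(G,M)$ restricts to a map $\pi_\mathcal{E}^{SL_2}$ on invariants, and because $SL_2(\k)$ is semisimple the functor of taking invariants is exact; hence $\pi_\mathcal{E}^{SL_2}$ is again surjective and its target is exactly $\con{m}{n}^{SL_2}/\ker(\pi_\mathcal{E}^{SL_2})$.

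The two inputs are then already in hand. First, Theorem \ref{thm: conpres} presents the source $\con{m}{n}^{SL_2}$ on the generators $\{\X_i,\X_i^\iota,\oute{i}{j}\}$ subject to relations (4) and (5) of the statement (centrality of traces and the $\oute{i}{j}$-splitting relation). Second, Corollary \ref{coro: matcharpres} identifies $\ker(\pi_\mathcal{E}^{SL_2})$ as the two-sided ideal generated by the three families appearing as relations (1)--(3). Since each of these kernel generators is visibly a word in the chosen generators $\{\X_i,\X_i^\iota,\oute{i}{j}\}$, adjoining them to the relations coming from Theorem \ref{thm: conpres} produces precisely the list (1)--(5), and this combined presentation presents the quotient. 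I would write this out by lifting all five relation families to the free algebra on $\{\X_i,\X_i^\iota,\oute{i}{j}\}$, observing that the ideal they generate is the preimage under the presenting surjection of $\ker(\pi_\mathcal{E}^{SL_2})$, and concluding.

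The one point requiring genuine care — and the step I would treat as the crux — is the passage to invariants inside the kernel, i.e. verifying that no new relations materialize upon restricting to the invariant subalgebra. This is not automatic: the kernel generators of Definition \ref{defn: repalg} are themselves not $SL_2$-invariant, which is exactly why the intermediate device of tensoring with $End(\V)$ and passing to the even part was introduced. That work, however, has already been done: the preceding Lemma (computing $\ker(\pi^e_\mathcal{E})$) shows this kernel is generated by $SL_2$-invariant elements, so Lemma \ref{lemma: invrel} applies and guarantees that the kernel of the restricted map $\pi_\mathcal{E}^{SL_2}$ is generated by the same elements, with no extra invariant relations appearing. Granting that, the corollary is a purely formal amalgamation of Theorem \ref{thm: conpres} with Corollary \ref{coro: matcharpres}, requiring no further computation.
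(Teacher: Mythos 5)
Your proposal is correct and takes essentially the same route as the paper: the corollary is obtained there precisely by amalgamating the presentation of $\con{m}{n}^{SL_2}$ from Theorem \ref{thm: conpres} (relations (4)--(5)) with the generators of $\ker(\pi_\mathcal{E}^{SL_2})$ from Corollary \ref{coro: matcharpres} (relations (1)--(3)), surjectivity on invariants being standard for the reductive group $SL_2(\k)$. You also correctly identify the crux --- that the invariant kernel is generated by the same elements, which is exactly what the even-part/$End(\V)$-tensoring lemma together with Lemma \ref{lemma: invrel} supplies --- so no further argument is required.
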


\subsection{A presentation of the matrix character algebra}
There is a more natural presentation of the algebra $(End(\V)\otimes \Rep(G,M))^{SL_2}$ which is independent of the choice of presentation of $(G,M)$. Let $\k[G]$ be the group ring of $G$, and let
\[\mathcal{T}_GM^2:=\mathcal{T}_{\k[G]}(\k M\otimes \k M)\]
be the tensor algebra of $\k M\otimes \k M$ over $\k[G]$.  For $g\in G$, the corresponding element in $\k[G]$ will be denoted $\X_g$; for $p,q\in M$, the corresponding element in $\k M\otimes \k M$ will be denoted $\oute{p}{q}$.
\begin{lemma} The tensor algebra $\mathcal{T}_GM^2$ is naturally isomorphic to
the abstract algebra generated by $\{\X_i,\X_i^\iota,\oute{j}{k}\}$ for $1\leq i\leq m$ and $1\leq j,k\leq m$, with relations generated by
\begin{enumerate}
\item $\X_i\X_i^\iota-Id_\V$ for all $1\leq i\leq n$,
\item $\X_{i_1}\X_{i_2}...\X_{i_j}-Id_\V$ for each relation in $G$.
\item $(\X_{j_1}\X_{j_2}...\X_{j_k}-Id_\V)\oute{j}{j'}$ for each $1\leq j,j'\leq m$, and each generator of the stablizer of $p_j$.
\end{enumerate}
\end{lemma}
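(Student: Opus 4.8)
The plan is to construct a map in each direction and verify they are mutually inverse. First I would define a homomorphism $\Phi$ from the abstract algebra $B$ (generated by $\{\X_i,\X_i^\iota,\oute{j}{k}\}$ modulo relations (1)--(3)) into $\mathcal{T}_GM^2$. On generators, send $\X_i \mapsto \X_{g_i}$, send $\oute{j}{k}\mapsto \oute{p_j}{p_k}$, and send $\X_i^\iota \mapsto \X_{g_i^{-1}}$; the last assignment is the natural choice because relation (1), $\X_i\X_i^\iota = Id_\V$, becomes $\X_{g_i}\X_{g_i^{-1}} = \X_e = 1$, which holds in $\k[G]$. To see $\Phi$ is well-defined I must check that relations (2) and (3) map to zero (equivalently, to the unit): a relation $g_{i_1}\cdots g_{i_j}=e$ in $G$ gives $\X_{g_{i_1}}\cdots\X_{g_{i_j}} = \X_e = 1$ in $\k[G]$, handling (2); and a stabilizer relation $g_{j_1}\cdots g_{j_k}$ fixing $p_j$ means $\X_{g_{j_1}\cdots g_{j_k}}\otimes$-acting on $\oute{p_j}{p_{j'}}$ reproduces $\oute{p_j}{p_{j'}}$, so the image of $(\X_{j_1}\cdots\X_{j_k}-Id_\V)\oute{j}{j'}$ vanishes in $\k M\otimes \k M$ over $\k[G]$, handling (3).

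Next I would build the inverse map $\Psi:\mathcal{T}_GM^2 \to B$. Since $\mathcal{T}_GM^2$ is a tensor algebra over $\k[G]$, it suffices to specify $\Psi$ on $\k[G]$ and on $\k M\otimes \k M$ compatibly. Every $g\in G$ is a positive word $g_{i_1}\cdots g_{i_k}$ in the chosen generators (using that the generating set is closed under inverses, as footnoted in the presentation of $(G,M)$), so I set $\Psi(\X_g) := \X_{i_1}\cdots\X_{i_k}$ and $\Psi(\oute{p}{q})$ to the corresponding product of $\X_i$'s applied to $\oute{j}{k}$. The content here is that $\Psi$ is well-defined: independence of the chosen word for $g$ is exactly the statement that relations (2) (and relation (1) to move between $\X_i$ and $\X_i^\iota = \X_{g_i^{-1}}$) force any two positive words representing the same group element to be equal in $B$, while independence of the representative for an element of $\k M\otimes \k M$ uses relation (3) for stabilizers. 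The $\k[G]$-bimodule structure and the tensor-algebra universal property then assemble these assignments into an algebra homomorphism.

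Finally I would verify $\Phi\circ\Psi = \mathrm{id}$ and $\Psi\circ\Phi=\mathrm{id}$ by checking on generators, which is immediate from the definitions: $\Phi\Psi(\X_{g_i}) = \Phi(\X_i) = \X_{g_i}$, and $\Psi\Phi(\X_i)=\Psi(\X_{g_i})=\X_i$, with the outer-product and $\iota$ generators treated the same way (noting $\X_i^\iota$ in $B$ equals $\Psi(\X_{g_i^{-1}})$ via relation (1), so it is hit by $\Psi$). The main obstacle is the well-definedness of $\Psi$, i.e. verifying that the two-sided ideal generated by relations (1)--(3) in $B$ is exactly large enough to identify all positive words representing a given group element and all expressions representing a given decorated tensor, but no larger. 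This amounts to matching the generators-and-relations description of $\k[G] = \mathcal{T}(\text{generators})/(\text{group relations})$ and of the induced module $\k M\otimes\k M$ against the ideal in $B$; the key subtlety is correctly bookkeeping the interplay between the involution $\iota$ (which encodes inverses through $\X_i^\iota = \X_{g_i^{-1}}$) and the stabilizer relations, ensuring the $\k[G]$-linearity in the tensor algebra is faithfully reflected.
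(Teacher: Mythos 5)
Your proposal is correct and takes essentially the same route as the paper: the paper's proof likewise defines $\X_g:=\X_{k_1}\X_{k_2}\cdots\X_{k_l}$ for a chosen word $g_{k_1}g_{k_2}\cdots g_{k_l}=g$ in the generators, and $\oute{p}{q}:=\X_{i_1}\cdots\X_{i_k}\oute{i}{j}\X_{j_l}^\iota\cdots\X_{j_1}^\iota$ for chosen words expressing $p$ and $q$ in terms of the orbit representatives, invoking relation (1) to identify $\X_i^\iota$ with $\X_i^{-1}$ and relations (2)--(3) for independence of the chosen words --- which is exactly your map $\Psi$. The only difference is explicitness: the paper leaves the forward map $\Phi$ and the mutual-inverse verification implicit, steps you spell out via the universal property of the tensor algebra over $\k[G]$.
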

\begin{proof}
First, note that Relation (1) in Corollary \ref{coro: matcharpres} means that $\X_i^\iota=\X_i^{-1}$.
For $g\in G$, let $g_{k_1}g_{k_2}...g_{k_l}=g$ be a word for $g$ in the generating set $\{g_i,g_i^{-1}\}$. Then
\[ \X_g= \X_{k_1}\X_{k_2}...\X_{k_l}\]
Relation (2) in Corollary \ref{coro: matcharpres} guarentees this element is independent of the choice of word for $g$.  Similarly, for $p,q\in M$, let $g_{i_1}g_{i_2}...g_{i_k}p_i=p$ and $g_{j_1}g_{j_2}...g_{j_{l}}p_{j}=q$ be words for $p$ and $q$.  Then
\[ \oute{p}{q} =\X_{{i_1}}\X_{{i_2}}...\X_{{i_k}}\oute{i}{j}\X_{{j_l}}^\iota...\X_{{j_2}}^\iota\X_{{j_{l}}}^\iota\]
Relation (3) in Corollary \ref{coro: matcharpres} guarentees this element is independent of the choice of words for $p$ and $q$.
\end{proof}

There is then a surjection
\begin{equation}\label{eq: tensorsurj}
\mathcal{T}_GM^2\rightarrow (End(\V)\otimes \Rep(G,M))^{SL_2}
\end{equation}
whose kernel is generated by relations of type (4) and (5) in Corollary \ref{coro: matcharpres}.  The elements $\X_g$ and $\oute{p}{q}$ are identified with their image.  As elements of $End(\V)\otimes \Rep(G,M)$, they can be evaluated on a group action map $\rho:(G,M)\rightarrow (SL_2(\k),\V)$ to give elements of $End(\V)$.  Specifically,
\[ \X_g(\rho) = \rho(g),\;\;\;\;\;\oute{p}{q}(\rho) = \Theta(\rho(p),\rho(q)) = \rho(p)\rho(q)^\perp\]
The anti-involution $\iota:\mathcal{T}_GM^2\rightarrow \mathcal{T}_GM^2$ can be formally defined by
\[ (\X_g)^\iota := \X_{g^{-1}},\;\;\;\;\; (\oute{p}{q})^\iota = -\oute{q}{p}\]
Similarly, the linear map $tr:\mathcal{T}_GM^2\dashrightarrow \mathcal{T}_GM^2$ can be formally defined, by
\[ tr(\mathbf{A}):=\mathbf{A}+\mathbf{A}^\iota\]


Then, the following elegant theorem is the fruit of all of the labor so far, and the source of all of the results to follow.

\begin{thm}\label{thm: matcharpres}
By the surjection \eqref{eq: tensorsurj}, the algebra $(End(\V)\otimes \Rep(G,M))^{SL_2}$ is isomorphic to the quotient of $\mathcal{T}_GM^2$ by the two-sided ideal generated by
\begin{enumerate}
\item $tr(\mathbf{A})\mathbf{B}-\mathbf{B}tr(\mathbf{A})$, for all $\mathbf{A},\mathbf{B}\in \mathcal{T}_GM^2$.
\item $\oute{p}{q}\oute{p'}{q'}-tr(\oute{p'}{q})\oute{p}{q'}$,
for all $p,q,p',q'\in M$.
\end{enumerate}
\end{thm}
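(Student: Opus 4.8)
The plan is to deduce the theorem almost immediately from the two results that precede it, the only genuine work being a dictionary between two sets of generators. The Lemma just above presents $\mathcal{T}_GM^2$ by the generators $\{\X_i,\X_i^\iota,\oute{j}{k}\}$ subject to relations (1)--(3) of Corollary \ref{coro: matcharpres}, and that Corollary presents $(End(\V)\otimes\Rep(G,M))^{SL_2}$ by the same generators subject to (1)--(5). Hence the surjection \eqref{eq: tensorsurj} realizes the target as the quotient of $\mathcal{T}_GM^2$ by the two-sided ideal generated by relations (4) and (5), and it suffices to show that, modulo (1)--(3), this ideal equals the one generated by the two relations in the present statement. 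Relation (4), $(\mathbf{A}+\mathbf{A}^\iota)\mathbf{B}=\mathbf{B}(\mathbf{A}+\mathbf{A}^\iota)$, becomes relation (1) of the statement on substituting $tr(\mathbf{A})=\mathbf{A}+\mathbf{A}^\iota$ (the passage from words to arbitrary elements changes nothing, since $tr$ is linear and the ideal generated is unaffected), so that relation requires no argument.

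Before handling relation (5) I would record that $tr$ is cyclic modulo relation (4), i.e. $tr(\mathbf{AB})\equiv tr(\mathbf{BA})$. Indeed $tr(\mathbf{AB})-tr(\mathbf{BA})=(\mathbf{AB}-\mathbf{BA})+(\mathbf{B}^\iota\mathbf{A}^\iota-\mathbf{A}^\iota\mathbf{B}^\iota)$, and applying the centrality relation (4) to the pairs $(\mathbf{A}^\iota,\mathbf{B}^\iota)$ and $(\mathbf{B},\mathbf{A})$ rewrites the second bracket as $\mathbf{BA}-\mathbf{AB}$, producing cancellation. I would also reuse the dictionary from the proof of the preceding Lemma: whenever $p=h p_i$ and $q=h' p_j$ one has $\oute{p}{q}=\X_h\oute{i}{j}\X_{h'}^\iota$, so any intrinsic outer product rewrites in terms of the orbit-representative generators, and conversely a group element adjacent to an $\oute{}{}$ is absorbed into its indices, $\X_h\oute{p'}{q'}=\oute{hp'}{q'}$.

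The translation of relation (5) then proceeds by showing both inclusions of ideals. To get relation (2) of the statement from (5): given $p,q,p',q'\in M$, write each as $h\cdot p_\bullet$, expand $\oute{p}{q}\oute{p'}{q'}=\X_{h_1}\oute{i}{j}(\X_{h_2}^\iota\X_{h_3})\oute{i'}{j'}\X_{h_4}^\iota$, apply (5) with the $\k[G]$-element $\X_{h_2}^\iota\X_{h_3}$ as its middle term, pull the resulting trace to the front by centrality, and identify it with $tr(\oute{p'}{q})$ using cyclicity; the remaining factor $\X_{h_1}\oute{i}{j'}\X_{h_4}^\iota$ is precisely $\oute{p}{q'}$. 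Conversely, to recover (5) from relation (2), I would induct on the number of outer-product letters in the middle word $\mathbf{A}$ of $\oute{i}{j}\mathbf{A}\oute{i'}{j'}$: each such letter is flanked by outer products and so is collapsed by relation (2) (any intervening group element being absorbed into an index first), reducing $\mathbf{A}$ to a single $\k[G]$-element, for which (5) is relation (2) after one absorption and one use of cyclicity.

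The routine but delicate point, which I expect to be the main obstacle, is exactly this last reduction: relation (5) permits an arbitrary word between its two outer products whereas relation (2) of the statement permits nothing, so one must check that every such word genuinely collapses under relations (2) and (4), with no spurious relation produced, while keeping the bookkeeping of orbit representatives against arbitrary marked points consistent. Granting this, the two ideals coincide modulo (1)--(3) and \eqref{eq: tensorsurj} descends to the asserted isomorphism.
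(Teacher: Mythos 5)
Your proposal is correct and follows essentially the same route as the paper's proof: identify the quotient of $\mathcal{T}_GM^2$ by relations (1)--(3) via the preceding Lemma, then show the ideal generated by relations (4)--(5) of Corollary \ref{coro: matcharpres} coincides with the one generated by the theorem's two relations, absorbing group elements $\X_g$ into outer products and collapsing the word $\mathbf{A}$ by induction exactly as in the paper's displayed computation for $\mathbf{A}=\oute{p'}{q'}$. You are in fact somewhat more careful than the paper, which treats the forward inclusion as immediate and leaves the cyclicity of $tr$ modulo the centrality relation implicit; the ``routine but delicate'' recombination of trace factors you flag is handled in the paper by the identity $tr(\oute{p'}{q})tr(\oute{p''}{q'})=tr(\oute{p'}{q'}\oute{p''}{q})$, which is relation (2) read inside the trace, and it does go through as you expect.
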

\begin{proof}
Quotienting by just first three relations in Corollary \ref{coro: matcharpres} gives $\mathcal{T}_GM^2$, by the lemma.    The remaining two relations are almost copied directly from Corollary \ref{coro: matcharpres}; the only difference is that the Relation (2) no longer contains a generic word $\mathbf{A}$.

Note that it suffices to assume that such an $\mathbf{A}$ is a word in $\{\oute{p}{q}\}$, since any $\X_g$ may be absorbed into an outer product $\oute{p}{q}$.  For $\mathbf{A}=\oute{p'}{q'}$, using Relation (5),
\begin{eqnarray*}
 \oute{p}{q}(\oute{p'}{q'})\oute{p''}{q''} &=&  tr(\oute{p'}{q})\oute{p}{q'}\oute{p''}{q''}\\
&=&tr(\oute{p'}{q})tr(\oute{p''}{q'})\oute{p}{q''}\\
&=& tr((\oute{p'}{q'})\oute{p''}{q})\oute{p}{q''}\\
\end{eqnarray*}
The same argument works for longer $\mathbf{A}$, and so every relation of type (5) in Corollary \ref{coro: matcharpres} can be deduced from Relation (2) in the statement of the theorem.
\end{proof}

\subsection{A presentation of the character algebra}  Recall the map
\[ \chi:H^+(G,M)\rightarrow \Rep(G,M)^{SL_2}\]
which we wish to show is an isomorphism.  Composing with the inclusion $\Rep(G,M)\hookrightarrow End(\V)\otimes \Rep(G,M)$ gives
\[ \chi:H^+(G,M)\rightarrow (End(\V)\otimes \Rep(G,M))^{SL_2}\]
In this larger context, $\chi$ may be expressed in terms of the trace.
\[ \chi([g]) = tr(\X_g),\;\;\;\;\chi([p,q]) = tr(\oute{q}{p})\]

The scalars $\k\subset End(\V)$ are characterized as the elements which are $\iota$-invariant.  By extension, the representation algebra
\[ \Rep(G,M)\subset End(\V)\otimes \Rep(G,M)\]
is the $\iota$-invariant subalgebra, and so
\[ \Rep(G,M)^{SL_2}\subset (End(\V)\otimes \Rep(G,M))^{SL_2}\]
is the $\iota$-invariant subalgebra.  Since the trace map projects onto the $\iota$-invariants, $\Rep(G,M)^{SL_2}$ is the subalgebra of $(End(\V)\otimes \Rep(G,M))^{SL_2}$ which is the image of the trace.


Define a linear map
$ \tau:\mathcal{T}_GM^2\dashrightarrow H^+(G,M)$
by
$ \tau(\X_g) := [g]$
and
\[ \tau\left(\oute{p_1}{q_1}\oute{p_2}{q_2}...\oute{p_n}{q_n}\right):=[q_1,p_2][q_2,p_3]...[q_n,p_1]\]
The following proposition collects the important properties of $\tau$.
\begin{prop} Let $\mathbf{A},\mathbf{B}\in \mathcal{T}_GM^2$.
\begin{enumerate}
\item $\tau(\mathbf{A}^\iota)=\tau(\mathbf{A})$.
\item $\tau(\mathbf{A}\mathbf{B})=\tau(\mathbf{B}\mathbf{A})$.
\item $\tau(tr(\mathbf{A})\mathbf{B})=\tau(\mathbf{A})\tau(\mathbf{B})$.
\item The map $\tau$ descends to a map $\tau:(End(\V)\otimes \Rep(G,M))^{SL_2}\dashrightarrow H^+(G,M)$.
\end{enumerate}
\end{prop}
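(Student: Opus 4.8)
The plan is to treat $\tau$ as a purely formal shadow of the matrix trace and to verify each property by reducing, via linearity and the bimodule relations of $\mathcal{T}_GM^2$, to the two kinds of monomials: group elements $\X_g$ and pure outer-product words $\oute{p_1}{q_1}\cdots\oute{p_n}{q_n}$. (Every positive-degree element reduces to such a word, since $\X_g\oute{p}{q}=\oute{gp}{q}$ and $\oute{p}{q}\X_g=\oute{p}{g^{-1}q}$ absorb all interspersed group elements.) Before checking (1)--(3) I would record that $\tau$ is well defined: the only ambiguity among pure words is $\oute{p}{g^{-1}q}\oute{p'}{q'}=\oute{p}{q}\oute{gp'}{q'}$, and $\tau$ respects it precisely because relation (3) of $H^+(G,M)$ gives $[g^{-1}q,p']=[q,gp']$. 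The guiding intuition throughout is that $\oute{p_1}{q_1}\cdots\oute{p_n}{q_n}$ represents the endomorphism-valued function $\rho\mapsto\rho(p_1)\rho(q_1)^\perp\cdots\rho(p_n)\rho(q_n)^\perp$, whose genuine trace telescopes (each $\rho(q_i)^\perp\rho(p_{i+1})$ being the scalar $\omega(\rho(q_i),\rho(p_{i+1}))$) into the cyclic product $\prod_i\omega(\rho(q_i),\rho(p_{i+1}))$; the definition of $\tau$ is exactly this cyclic product read in $H^+(G,M)$.

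Properties (1) and (2) are the ``trace axioms'' and I expect them to be routine. For (1), reversing a length-$n$ word under $\iota$ produces a factor $(-1)^n$ from $(\oute{p}{q})^\iota=-\oute{q}{p}$ and a second factor $(-1)^n$ from flipping each bracket back via the antisymmetry relation (2); the two signs cancel and the same cyclic bracket product reappears, while on degree-$0$ elements (1) is the identity $[g]=[g^{-1}]$, a formal consequence of relations (1) and (4). For (2), the defining cyclic product is manifestly invariant under cyclic rotation of a pure word, which is $\tau(\mathbf{A}\mathbf{B})=\tau(\mathbf{B}\mathbf{A})$ when both factors are positive-degree; the mixed case reduces again to relation (3), and the degree-$0$ case is $[gh]=[hg]$, once more a formal consequence of relations (1) and (4).

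The heart of the proposition is property (3), and this is where I expect the only genuinely non-formal work and the essential use of the quadratic Pl\"ucker relation (6). Expanding $tr(\mathbf{A})\mathbf{B}=\mathbf{A}\mathbf{B}+\mathbf{A}^\iota\mathbf{B}$ and taking $\mathbf{A}=\oute{p_1}{q_1}\cdots\oute{p_k}{q_k}$, $\mathbf{B}=\oute{p_{k+1}}{q_{k+1}}\cdots\oute{p_n}{q_n}$, the two cyclic products $\tau(\mathbf{A})$ and $\tau(\mathbf{B})$ differ from the single cyclic product $\tau(\mathbf{A}\mathbf{B})$ only in the two ``seam'' brackets where the long cycle is cut and the two short cycles are glued. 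After normalizing signs with (2), the identity to be proved collapses to one instance of
\[ [a,b][c,d]=[a,d][c,b]+[a,c][b,d], \]
namely relation (6) with $(a,b,c,d)=(q_k,p_{k+1},q_n,p_1)$. The boundary cases in which $\mathbf{A}$ or $\mathbf{B}$ is a $\X_g$ yield instead relation (5) (combined with (3) to move the $G$-action into the correct slot) and relation (4); these are straightforward once the seam combinatorics are in place. This seam computation is the main obstacle, and everything else is bookkeeping.

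Finally, property (4) follows formally from (2) and (3). By Theorem \ref{thm: matcharpres} the target is $\mathcal{T}_GM^2$ modulo the two-sided ideal $J$ generated by $tr(\mathbf{A})\mathbf{B}-\mathbf{B}\,tr(\mathbf{A})$ and $\oute{p}{q}\oute{p'}{q'}-tr(\oute{p'}{q})\oute{p}{q'}$, so it suffices to show $\tau(J)=0$. Since $J$ is spanned by elements $xry$ with $r$ one of the two generators, cyclicity (2) reduces this to $\tau(rz)=0$ for all $z$. For the first generator, $\tau(tr(\mathbf{A})\mathbf{B}z)$ and $\tau(\mathbf{B}\,tr(\mathbf{A})z)$ are each equal to $\tau(\mathbf{A})\tau(\mathbf{B}z)$ by (2) and (3), so their difference vanishes. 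For the second, (3) rewrites $\tau(tr(\oute{p'}{q})\oute{p}{q'}z)$ as $\tau(\oute{p'}{q})\tau(\oute{p}{q'}z)$, and a direct reindexing shows this equals $\tau(\oute{p}{q}\oute{p'}{q'}z)$: both are the same cyclic bracket product once the factor $[q,p']=\tau(\oute{p'}{q})$ is split off. Hence $\tau$ annihilates $J$ and descends to $(End(\V)\otimes\Rep(G,M))^{SL_2}$ as claimed.
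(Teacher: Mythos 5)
Your proposal is correct and follows essentially the same route as the paper: reduce $\tau$ to monomials ($\X_g$'s and pure outer-product words), verify (1) and (2) by the matching $(-1)^n$ sign cancellations and cyclic-bracket bookkeeping, isolate for (3) exactly the single Pl\"ucker seam instance the paper uses (relations (4), (5) combined with (3) covering the boundary cases), and deduce (4) formally from (2)--(3) together with Theorem \ref{thm: matcharpres}. Your explicit check that $\tau$ is well defined on the tensor algebra over $\k[G]$ (via relation (3) of $H^+(G,M)$ handling the balanced-tensor ambiguity) is left implicit in the paper but is correct and a worthwhile addition.
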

\begin{proof}
\begin{enumerate}
\item One has
$ [g]+[g^{-1}]=[e][g]=2[g]$
and so $[g^{-1}]=[g]$.
\[ \tau(\X_g) = [g]=[g^{-1}] = \tau(\X_{g^{-1}})\]
Next, by definition, $[p,q]=-[q,p]$, and so
\begin{eqnarray*}
\tau\left((\oute{p_1}{q_1}\oute{p_2}{q_2}...\oute{p_n}{q_n})^\iota\right)
&=&\tau\left(\oute{p_n}{q_n}^\iota\oute{p_{n-1}}{q_{n-1}}^\iota...\oute{p_1}{q_1}^\iota\right) \\
&=&(-1)^n\tau\left(\oute{q_n}{p_n}\oute{q_{n-1}}{p_{n-1}}...\oute{q_1}{p_1}\right) \\
&=&(-1)^n[p_n,q_{n-1}][p_{n-1},q_{n-2}]...[p_1,q_n] \\
&=&[q_1,p_2][q_2,p_3]...[q_n,p_1]\\
&=&\tau\left(\oute{p_1}{q_1}\oute{p_2}{q_2}...\oute{p_n}{q_n}\right)
\end{eqnarray*}
\item One has
\[ [gh] = [g][h]-[h^{-1}g] = [h][g]-[g^{-1}h] = [hg]\]
and so $\tau(\X_{g}\X_h)=[gh]=[hg]=\tau(\X_{h}\X_g)$.
\begin{eqnarray*}
\tau(\X_g\oute{p_1}{q_1}\oute{p_2}{q_2}...\oute{p_n}{q_n}) &=& [q_1,p_2][q_2,p_3]...[q_n,gp_1]\\
&=& [q_1,p_2][q_2,p_3]...[g^{-1}q_n,p_1]\\
&=& \tau(\oute{p_1}{q_1}\oute{p_2}{q_2}...\oute{p_n}{q_n}\X_g)
\end{eqnarray*}
\begin{eqnarray*}
\tau(\oute{p_1}{q_1}...\oute{p_i}{q_i}\oute{p_{i+1}}{q_{i+1}}...\oute{p_n}{q_n}) &=& [q_1,p_2]...[q_i,p_{i+1}][q_{i+1},p_{i+2}]...[q_n,p_1]\\
&=& [q_{i+1},p_{i+2}]...[q_n,p_1][q_1,p_2]...[q_i,p_{i+1}]\\
&=& \tau(\oute{p_{i+1}}{q_{i+1}}...\oute{p_n}{q_n}\oute{p_1}{q_1}...\oute{p_i}{q_i})
\end{eqnarray*}
Thus, $\tau(\mathbf{A}\mathbf{B})=\tau(\mathbf{B}\mathbf{A})$.

\item
%
We check all the cases.
\begin{eqnarray*}
\tau(tr(\X_g)\X_h) &=& \tau(\X_{gh}+\X_{g^{-1}h}) = [gh]+[g^{-1}h]=[g][h]=\tau(\X_g)\tau(\X_h)
\end{eqnarray*}
\begin{eqnarray*}
\tau(tr(\X_g)\oute{p_1}{q_1}\oute{p_2}{q_2}...\oute{p_n}{q_n}) &=& \tau((\oute{gp_1}{q_1}+\oute{g^{-1}p_1}{q_1})\oute{p_2}{q_2}...\oute{p_n}{q_n}) \\
&=& [q_1,p_2]...[q_{n-1},p_n]([q_n,gp_1]+[q_n,g^{-1}p_1])\\
&=& [g][q_1,p_2]...[q_{n-1},p_n][q_n,p_1]\\
&=& \tau(\X_g)\tau(\oute{p_1}{q_1}\oute{p_2}{q_2}...\oute{p_n}{q_n})
\end{eqnarray*}
\begin{align*}
 \tau (tr & (\oute{p_1}{q_1}... \oute{p_j}{q_j})   \oute{p_{j+1}}{q_{j+1}}...\oute{p_n}{q_n} )   \\
= & \tau(\oute{p_1}{q_1}... \oute{p_j}{q_j} \oute{p_{j+1}}{q_{j+1}}...\oute{p_n}{q_n}
+(-1)^j\oute{q_j}{p_j}... \oute{q_1}{p_1} \oute{p_{j+1}}{q_{j+1}}...\oute{p_n}{q_n} ) \\
= & [q_1,p_2]...[q_j,p_{j+1}]...[q_n,p_1] + (-1)^{j}[p_j,q_{j-1}]...[p_1,p_{j+1}]...[q_n,q_j] \\
= & [q_1,p_2]...[q_{j-1},p_j][q_{j+1},p_{j+2}]...[q_{n-1},p_n]([q_j,p_{j+1}][q_n,p_1] -[p_1,p_{j+1}][q_n,q_j]) \\
= & ([q_1,p_2]...[q_{j-1},p_j][q_j,p_1])([q_{j+1},p_{j+2}]...[q_{n-1},p_n][q_n,p_j]) \\
= & \tau  (\oute{p_1}{q_1}... \oute{p_j}{q_j})\tau(   \oute{p_{j+1}}{q_{j+1}}...\oute{p_n}{q_n} )
\end{align*}

\item Let $\mathbf{A},\mathbf{B},\mathbf{C},\mathbf{D}\in \mathcal{T}_GM^2$.  Then
\begin{eqnarray*}
\tau(\mathbf{C}tr(\mathbf{A})\mathbf{B}\mathbf{D}) &=& \tau(tr(\mathbf{A})\mathbf{B}\mathbf{D}\mathbf{C}) \\
&=& \tau(\mathbf{A})\tau(\mathbf{B}\mathbf{D}\mathbf{C}) \\
&=& \tau(\mathbf{A})\tau(\mathbf{D}\mathbf{C}\mathbf{B}) \\
&=& \tau(tr(\mathbf{A})\mathbf{D}\mathbf{C}\mathbf{B}) \\
&=& \tau(\mathbf{C}\mathbf{B}tr(\mathbf{A})\mathbf{D})
\end{eqnarray*}
Therefore, $\tau$ kills $\mathbf{C}(tr(\mathbf{A})\mathbf{B}-\mathbf{B}tr(\mathbf{A}))\mathbf{D}$.  Next, for $\{p_i\},\{q_i\}\in M$,
\begin{align*}
\tau(\oute{p_1}{q_1}... \oute{p_j}{q_j}  & \oute{p_{j+1}}{q_{j+1}}...\oute{p_n}{q_n} )   \\
= & [q_1,p_2]....[q_{j-1},p_j][q_j,p_{j+1}]...[q_n,p_1] \\
= & \tau(\oute{p_{j+1}}{q_j})\tau\left(\oute{p_1}{q_1}...\oute{p_j}{q_{j+1}}...\oute{p_n}{q_n}\right)\\
= & \tau(\oute{p_{j+1}}{q_j})\tau\left(\oute{p_j}{q_{j+1}}...\oute{p_n}{q_n}\oute{p_1}{q_1}...\oute{p_{j-1}}{q_j}\right)\\
= & \tau\left(tr(\oute{p_{j+1}}{q_j})\oute{p_j}{q_{j+1}}...\oute{p_n}{q_n}\oute{p_1}{q_1}...\oute{p_{j-1}}{q_j}\right)\\
= & \tau\left(\oute{p_1}{q_1}...\oute{p_{j-1}}{q_j}tr(\oute{p_{j+1}}{q_j})\oute{p_j}{q_{j+1}}...\oute{p_n}{q_n}\right)
\end{align*}
Therefore, $\tau$ kills $\mathbf{A}(\oute{p}{q}\oute{p'}{q'}-tr(\oute{q}{p'})\oute{p}{q'})\mathbf{B}$.  By Theorem \ref{thm: matcharpres}, the map $\tau$ descends to a map $(End(\V)\otimes \Rep(G,M))^{SL_2}\dashrightarrow H^+(G,M)$.
\end{enumerate}
\end{proof}


We can now show that $\chi:H^+(G,M)\rightarrow \Rep(G,M)^{SL_2}$ is an isomorphism.
\begin{proof}[Proof of Theorem \ref{thm: main}] First, we show $\tau\chi(r)=2r$ for all $r$ by inducting on the length $l$ of $r\in H^+(G,M)$.  First, $\tau\chi(1)=\tau(1) =[e]=2$.  Next, let $r\in H^+(G,M)$ be a word of length $l-1$.
\[ \tau\chi([g]r) = \tau(tr(\X_g)\chi(r))=\tau(\X_g)\tau\chi(r)=[g]\tau\chi(r)=2[g]r\]
\[ \tau\chi([p,q]r) = \tau(tr(\oute{q}{p})\chi(r))=\tau(\oute{q}{p})\tau\chi(r)=[p,q]\tau\chi(r)=2[p,q]r\]
Therefore, $\chi$ is an inclusion.

Next, we show that $\chi\tau(A)=tr(A)$ for all $A\in End(\V)\otimes \Rep(G,M)^{SL_2}$.
\[ \chi\tau(\X_g) = \chi([g]) = tr(\X_g)\]
\begin{eqnarray*}
\chi\tau\left(\oute{p_1}{q_1}\oute{p_2}{q_2}...\oute{p_n}{q_n}\right)&=&\chi([q_1,p_2][q_2,p_3]...[q_n,p_1])\\
&=& tr(\oute{p_2}{q_1})tr(\oute{p_3}{q_2})...tr(\oute{p_1}{q_n})\\
&=& tr(\oute{p_1}{q_1}\oute{p_2}{q_2}...\oute{p_n}{q_n})
\end{eqnarray*}
Therefore, the image of $\chi$ is $\Rep(G,M)^{SL_2}$.
\end{proof}


\appendix

\section{Twisted character algebras}
The preceeding theory concerned group action maps $(G,M)\rightarrow (SL_2(\k),\V)$.  A useful variant is to consider group action maps which are `twisted' by a $\mathbb{Z}_2$-character of a central extension.  This is important for applications to Teichm\"uller space, skein algebras, and cluster algebras.

%

%
%
%

\subsection{Twisting by a central extension}

Let $(G,M)$ be a group action.  A \textbf{central extension} of $(G,M)$ will be a group action $(G',M')$, together with a map
\[ f=(f_G,f_M):(G',M')\rightarrow (G,M)\]
such that...
\begin{itemize}
\item $f_G$ and $f_M$ are surjective,
\item the kernel $K$ of $f_G$ is central in $G'$, and
\item for each $m\in M$, the preimage $f_M^{-1}(m)\subset M'$ is a (non-empty) free $K$-orbit.
\end{itemize}
A central extension of groups $G'\rightarrow G$ determines a central extension of group actions up to non-canonical isomorphism.

Let $s:K\rightarrow \{\pm1\}$ be a group homomorphism.\footnote{A choice of central extension $(G',M')$ is implicit in the definition of $s$.}  Then an \textbf{$s$-twisted representation} of $(G,M)$ into $(SL_2(\k),\V)$ is a map of group actions $\rho:(G',M')\rightarrow (SL_2(\k),\V)$ such that, for all $k\in K$, $\rho(k)=s(k)$.  If $s(K)=1$, then an $s$-twisted representation is equivalent to a(n untwisted) representation $(G,M)\rightarrow (SL_2(\k),\V)$.  Similarly, a splitting $h:(G,M)\rightarrow (G',M')$ (i.e., $f\circ h$ is the identity) induces a bijection between $s$-twisted representations and untwisted representations by pulling back along $h$.

\subsection{The twisted character algebra} The set of $s$-twisted representations of $(G,M)$ into $(SL_2(\k),\V)$ has a natural algebraic structure.  Define the \textbf{$s$-twisted representation algebra} as
\[ \Rep_s(G,M):=\Rep(G',M')/\langle g-s(g)\rangle_{g\in K}\]
This algebra has the following universal property; this follows from Proposition \ref{prop: unirep}.
\[ \left\{ \begin{array}{c}\text{group action maps}\\ \rho:(G',M')\rightarrow (SL_2(A),A\otimes \V)\\ \text{such that }\rho(g)=s(g),\forall g\in K\end{array}\right\}
\stackrel{\sim}{\longrightarrow}
\left\{ \begin{array}{c}\text{$\k$-algebra maps}\\ \Rep_s(G,M)\rightarrow A\end{array}\right\}\]
This algebra has an $SL_2(\k)$-action.  Define the \textbf{twisted character algebra} 
 to be the $SL_2$-invariant subalgebra,
\[ \Char_s(G,M):=\Rep_s(G,M)^{SL_2}\]
The definition of $Rep_s(G,M)$ provides a surjection
\[ \Rep(G',M')\rightarrow \Rep_s(G,M),\]
which induces a map
\[ \mu:\Char(G',M')\rightarrow \Char_s(G,M)\]
\begin{lemma}
The map $\mu$ is surjective, and the kernel of $\mu$ is generated by elements of the form 
\begin{itemize}
\item $\chi_{gh}-s(g)\chi_h$, for $g\in K$ and $h\in G'$, and
\item $\chi_{(gp,q)}-s(g)\chi_{(p,q)}$, for $g\in K$ and $p,q\in M'$.
\end{itemize}
\end{lemma}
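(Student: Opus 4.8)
The plan is to identify $\ker\mu$ with the $SL_2$-invariant part of the ideal cutting out $\Rep_s(G,M)$, and then to compute that invariant ideal by the $End(\V)$-tensoring device of Section \ref{section: gettinginvariantrelations}. Write $\pi:\Rep(G',M')\to\Rep_s(G,M)=\Rep(G',M')/J$ for the defining surjection, where $J$ is the ideal generated by the matrix entries $\varphi(\X_g-s(g)\cdot Id_\V)$, for $g\in K$ and $\varphi\in End(\V)^\vee$. Since $\pi$ is $SL_2$-equivariant, $J$ is $SL_2$-stable and $\mu=\pi^{SL_2}$. As $SL_2(\k)$ is semisimple, $(\Rep(G',M')/J)^{SL_2}=\Rep(G',M')^{SL_2}/J^{SL_2}$, which gives surjectivity of $\mu$ together with the identification $\ker\mu=J^{SL_2}$. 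The easy containment is direct: in any $s$-twisted representation $\rho$ one has $\rho(g)=s(g)\cdot Id_\V$ for $g\in K$, so $\chi_{gh}=tr(\rho(gh))=s(g)\,tr(\rho(h))=s(g)\chi_h$ and $\chi_{(gp,q)}=\omega(s(g)\rho_M(p),\rho_M(q))=s(g)\chi_{(p,q)}$; hence the listed elements lie in $\ker\mu$. Let $L$ be the ideal of $\Char(G',M')$ they generate; it remains to prove $\ker\mu\subseteq L$.

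Next I would promote the defining relations to genuine invariants by tensoring with $End(\V)$. The elements $\X_g-s(g)\cdot Id_\V$ ($g\in K$) are $SL_2$-invariant in $End(\V)\otimes\Rep(G',M')$, and the two-sided ideal $\widehat J$ they generate equals $End(\V)\otimes J=\ker\big(End(\V)\otimes\pi\big)$. This is the argument of the kernel lemma in Section \ref{section: gettinginvariantrelations}, but simpler: because the relations $\varphi(\X_g-s(g)Id_\V)$ are even, no passage to the even part is needed, and the identity $f\,vw^\perp\,tr(v'w'^\perp B)=f\,(vw'^\perp)\,B\,(v'w^\perp)$ (with $B=\X_g-s(g)Id_\V$) shows that every entry-multiple of $B$ lies in $\widehat J$. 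By Lemma \ref{lemma: invrel}, $\widehat J^{SL_2}$ is the two-sided ideal of $(End(\V)\otimes\Rep(G',M'))^{SL_2}$ generated by $\{\X_g-s(g)Id_\V\}_{g\in K}$. Since $s(g^{-1})=s(g)$, the ideal $\widehat J$ is stable under the anti-involution $\iota$; as $tr=1+\iota$ projects onto the $\iota$-fixed (scalar) part while $\Rep(G',M')^{SL_2}$ is exactly that scalar part, one obtains $\ker\mu=J^{SL_2}=tr(\widehat J^{SL_2})$.

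Finally I would compute this trace using the map $\tau$ and Theorem \ref{thm: matcharpres}. A spanning set for $\widehat J^{SL_2}$ is $\{\mathbf{C}(\X_g-s(g)Id_\V)\mathbf{D}\}$, and using $tr=\chi\circ\tau$ together with the cyclicity $\tau(\mathbf{A}\mathbf{B})=\tau(\mathbf{B}\mathbf{A})$ established above, it suffices to evaluate $\chi\big(\tau((\X_g-s(g)Id_\V)\mathbf{E})\big)$ for a single word $\mathbf{E}=\mathbf{D}\mathbf{C}$ in $\{\X_h,\oute{p}{q}\}$ (recall $\X_h^\iota=\X_{h^{-1}}$). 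Absorbing every $\X_h$ into a neighboring outer product ($\X_h\oute{p}{q}=\oute{hp}{q}$ and $\oute{p}{q}\X_h=\oute{p}{h^{-1}q}$) leaves two cases. If $\mathbf{E}=\X_h$, then $\tau((\X_g-s(g)Id_\V)\X_h)=[gh]-s(g)[h]$, whose $\chi$-image is $\chi_{gh}-s(g)\chi_h\in L$ (the degenerate case $\mathbf{E}=Id_\V$ is the instance $h=e$). If $\mathbf{E}=\oute{p_1}{q_1}\cdots\oute{p_n}{q_n}$, then the formula $\tau(\X_g\oute{p_1}{q_1}\cdots\oute{p_n}{q_n})=[q_1,p_2]\cdots[q_{n-1},p_n][q_n,gp_1]$ gives
\[
\tau\big((\X_g-s(g)Id_\V)\mathbf{E}\big)=[q_1,p_2]\cdots[q_{n-1},p_n]\big([q_n,gp_1]-s(g)[q_n,p_1]\big),
\]
and since $\chi([q_n,gp_1]-s(g)[q_n,p_1])=-(\chi_{(gp_1,q_n)}-s(g)\chi_{(p_1,q_n)})\in L$, the whole product lies in the ideal $L$. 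Hence $tr(\widehat J^{SL_2})\subseteq L$, so $\ker\mu=L$.

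The main obstacle is the middle step: the relations defining $\Rep_s(G,M)$ are not individually $SL_2$-invariant, so Lemma \ref{lemma: invrel} cannot be applied directly. Passing to $End(\V)\otimes\Rep(G',M')$ to replace them by honest invariant generators $\X_g-s(g)Id_\V$, and then carrying the resulting invariant ideal back through the trace, is the crux; once this is arranged, identifying the generators is a routine application of the established properties of $\tau$ and Theorem \ref{thm: main}.
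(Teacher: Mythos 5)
Your proof is correct and takes essentially the same route as the paper's own (outlined) proof: pass to $End(\V)\otimes \Rep(G',M')$ so the twisting relations become the invariant generators $\X_g-s(g)Id_\V$, apply Lemma \ref{lemma: invrel}, and recover $\ker\mu$ as traces $tr(\mathbf{A}(\X_g-s(g)Id_\V)\mathbf{B})$. Your concluding $\tau$-computation just fills in the step the paper dismisses with ``the theorem may be deduced directly,'' and your observation that no even-part passage is needed (the relations involve only the $\X_g$) matches Remark \ref{rem: gettinginvariant}.
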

\begin{proof}[Proof outline]
This proof is in the same spirit as the presentation of the character algebra, so we only outline the details.  First, the map on representation algebras induces a surjection
\[ End(\V)\otimes \Rep(G',M')\rightarrow End(\V)\otimes \Rep_s(G,M)\]
One observes that the kernel is generated by $\X_g-s(g)Id_\V$ as $g$ runs over $K$.  These relations are $SL_2$-invariant, and so by Lemma \ref{lemma: invrel}, the kernel of 
\[ (End(\V)\otimes \Rep(G',M'))^{SL_2}\rightarrow (End(\V)\otimes \Rep_s(G,M))^{SL_2}\]
is also generated by $\X_g-s(g)Id$ as $g$ runs over $K$.

Since the corresponding character algebras are the $\iota$-invariant subalgebras in this map, the kernel of $\mu$ is spanned by elements of the form 
\[tr(\mathbf{A}(\X_g-s(g)Id)\mathbf{B})=tr( (\X_g-s(g))\mathbf{BA})=tr(\X_g\mathbf{BA})-s(g)tr(\mathbf{BA})\]
From this, the theorem may be deduced directly.
\end{proof}
Then $\Char_s(G,M)$ can be presented by using Theorem \ref{thm: main}, with the additional classes of relations $\{\chi_{gh}=s(g)\chi_h\}$ and $\{\chi_{(gp,q)}=s(g)\chi_{(p,q)}\}$.

\bibliography{MyNewBib}{}
\bibliographystyle{amsalpha}

\end{document}